\theoremstyle{definition}
\newtheorem{theorem}{Theorem}[section]
\newtheorem{proposition}[theorem]{Proposition}
\newtheorem{lemma}[theorem]{Lemma}
\newtheorem{corollary}[theorem]{Corollary}
\theoremstyle{definition}
\newtheorem{definition}[theorem]{Definition}
\theoremstyle{remark}
\newtheorem{remark}[theorem]{Remark}
\newtheorem{example}[theorem]{Example}
\newcommand{\qis}{\operatorname{qis}}
\newcommand{\Obs}{\operatorname{Obs}}
\newcommand{\Hom}{\operatorname{Hom}}
\newcommand{\Spec}{\operatorname{Spec}}
\newcommand{\D}{\operatorname{d}\!}
\newcommand{\CC}{\mathbb{C}}
\newcommand{\QQ}{\mathbb{Q}}
\newcommand{\ZZ}{\mathbb{Z}}
\newcommand{\RR}{\mathbb{R}}
\newcommand{\PP}{\mathbb{P}}
\newcommand{\OO}{\mathcal{O}}
\newcommand{\Grass}{\operatorname{Grass}}
\newcommand{\Jac}{\operatorname{Jac}}
\newcommand{\grad}{\operatorname{grad}}
\newcommand{\reg}{\operatorname{reg}}
\newcommand{\Eu}{\operatorname{Eu}}
\newcommand{\Ind}{\operatorname{Ind}}
\title{A generalization of Milnor's formula}
\author{Matthias Zach}
\address{Matthias Zach, Institut f\"ur Mathematik \\
FB 08 - Physik, Mathematik und Informatik \\
Johannes Gutenberg-Universit\"at Mainz\\
Staudingerweg 9\\
55128 Mainz, Germany}
\begin{document}

\begin{abstract}
  We describe a generalization of Milnor's 
  formula 
  \[
    \mu_f = \dim_\CC \OO_n/\Jac(f)
  \]
  for the 
  Milnor number $\mu_f$ of an isolated 
  hypersurface singularity 
  $f \colon (\CC^n,0) \to (\CC,0)$
  to the case of a function $f$ whose restriction $f|(X,0)$ 
  to an arbitrarily singular reduced complex analytic space 
  $(X,0) \subset (\CC^n,0)$ has an isolated singularity 
  in the stratified sense.
  The corresponding analogue of the Milnor 
  number, $\mu_f(\alpha;X,0)$, is the number of Morse critical points 
  in a stratum $\mathscr S_\alpha$ of $(X,0)$ in a morsification 
  of $f|(X,0)$.
  Our formula expresses 
  $\mu_f(\alpha;X,0)$ as a homological index based on the derived 
  geometry of the Nash modification of the closure 
  of the stratum.
  While most of the topological aspects in this setup were already 
  understood, our considerations provide the corresponding analytic 
  counterpart. 
  We also describe how to compute 
  the numbers
  $\mu_f(\alpha;X,0)$ by means of our formula in the case where 
  the closure $\overline{ \mathscr S_\alpha} \subset X$ of the stratum in question 
  is a hypersurface.
\end{abstract}

\maketitle

\tableofcontents

\section{Summary of results}
\label{sec:Summary}

We start by a discussion of the Milnor number similar to the one found in 
\cite{SeadeTibarVerjovsky05}. 
The Milnor number $\mu_f$ is one of the central invariants of 
a holomorphic function 
\[
  f \colon (\CC^n,0) \to (\CC,0)
\]
with isolated singularity. It has -- among others -- the following 
characterizations, cf. \cite[Chapter 7]{Milnor68} and 
\cite[Chapter 2]{ArnoldGuseinZadeVarchenkoVolII}.
\begin{enumerate}[1)]
  \item It is the number of Morse critical points in a morsification 
    $f_\eta$ of $f$. 
  \item It is equal to the middle Betti number of the Milnor fiber 
    \[
      M_f = B_\varepsilon \cap f^{-1}(\{\delta\}), \quad \varepsilon \gg \delta >0.
    \]
  \item It is the degree of the map 
    \[
      \frac{1}{|\D f|} \D f \colon \partial B_\varepsilon \to S^{2n-1}
    \]
    for some choice of a Hermitian metric on $(\CC^n,0)$.
  \item It is the length of the Milnor algebra
    \[
      \OO_n / \Jac(f), 
    \]
    where $\Jac(f) = 
    \langle \frac{\partial f}{ \partial x_1},\dots,\frac{\partial f}{\partial x_n} \rangle$ 
    is the Jacobian ideal of $f$.
\end{enumerate}

In this note we consider the more general setup of an arbitrary  
reduced complex analytic space $(X,0) \subset (\CC^n,0)$ and 
a holomorphic 
function $f \colon (\CC^n,0) \to (\CC,0)$, whose restriction $f|(X,0)$
to $(X,0)$ has an isolated singularity in the stratified sense. 
To this end, we will assume that $(X,0)$ is endowed with a complex 
analytic Whitney 
stratification $S = \{ \mathscr S_\alpha \}_{\alpha \in A}$
with finitely many connected strata $\mathscr S_\alpha$. 
There always exists a Milnor fibration for the restriction 
$f|(X,0)$ of any function $f$ to $(X,0)$, regardless of whether 
or not $f|(X,0)$ has isolated singularity; see \cite{Le87}, or 
\cite{GoreskyMacPherson88}. Denote the corresponding Milnor fiber by 
\[
  M_{f|(X,0)} = B_\varepsilon \cap X \cap f^{-1}(\{\delta\}),
\]
where $X$ is a suitable representative, $B_\varepsilon$ a ball of radius $\varepsilon$ 
centered at the origin in $\CC^n$, and $\varepsilon \gg \delta > 0$ sufficiently small.

\medskip
We introduce invariants $\mu_f(\alpha;X,0)$ of $f|(X,0)$ --
see Definition \ref{def:MuFAlpha} --
which generalize the classical Milnor number \textit{simultaneously}
in all of these four characterizations. Let $X_\alpha = \overline{ \mathscr S_\alpha}$ 
be the closure of the stratum $\mathscr S_\alpha$ and $d(\alpha)$ its (complex) 
dimension. Then for every $\alpha \in A$ 
the number $\mu_f(\alpha;X,0)$ is

\begin{enumerate}[1')]
  \item the number of Morse critical points on the stratum $\mathscr S_\alpha$ 
    in a morsification of $f$.
    For the definition of morsifications in this context 
    see Section \ref{sec:Morsifications}.
  \item the number of direct summands for $\alpha$ in the homology 
    decomposition of the Milnor fiber $M_{f|(X,0)}$, see 
    Proposition \ref{prp:HomologyDecompositionStratifiedMorsification}. 
  \item the Euler obstruction $\Eu^{\D f}\left( X_\alpha,0 \right)$
    of the $1$-form $\D f$ on $(X_\alpha,0)$, 
    see Definition \ref{def:EulerObstructionOfAOneForm} and Corollary
    \ref{cor:MuEqualsEuForFunctions}.
  \item the homological index
    \[
      \mu_f(\alpha;X,0) = (-1)^{d(\alpha)} \cdot 
      \chi\left( \mathbb R \nu_* \left(\tilde \Omega^\bullet_{\alpha}, 
      \nu^* \D f \wedge - \right)_0 \right),
    \]
    i.e. as an Euler characteristic of a finite complex of coherent 
    $\OO_X$-modules, cf. Theorem \ref{thm:MainTheorem} and Corollary 
    \ref{cor:MainCorollary}.
\end{enumerate}

Generalizations similar to those of 1), 2), and 3) have been made
by J. Seade, M. Tib{\u a}r and A. Verjovsky
in \cite{SeadeTibarVerjovsky05}.
The Euler obstruction of a $1$-form was introduced by W. Ebeling and 
S. Gusein-Zade in \cite{EbelingGuseinZade05}. 
Contrary to these previous topological considerations, we will describe the 
Euler obstructions $\Eu^{\D f}(X_\alpha,0)$ 
as an \textit{analytic} invariant in 
Theorem \ref{thm:MainTheorem}. 
This allows us to also generalize the characterization 4) of the Milnor number 
to 4'). A description for how to compute the numbers $\mu_f(\alpha;X,0)$ whenever 
$\overline{\mathscr S_\alpha}$ is an algebraic hypersurface, $f$ is also 
algebraic and both are defined over a finite extension field of $\QQ$,
is described in Section \ref{sec:Algorithm}.

\begin{figure}[h]
  \centering
  \includegraphics[scale=0.14]{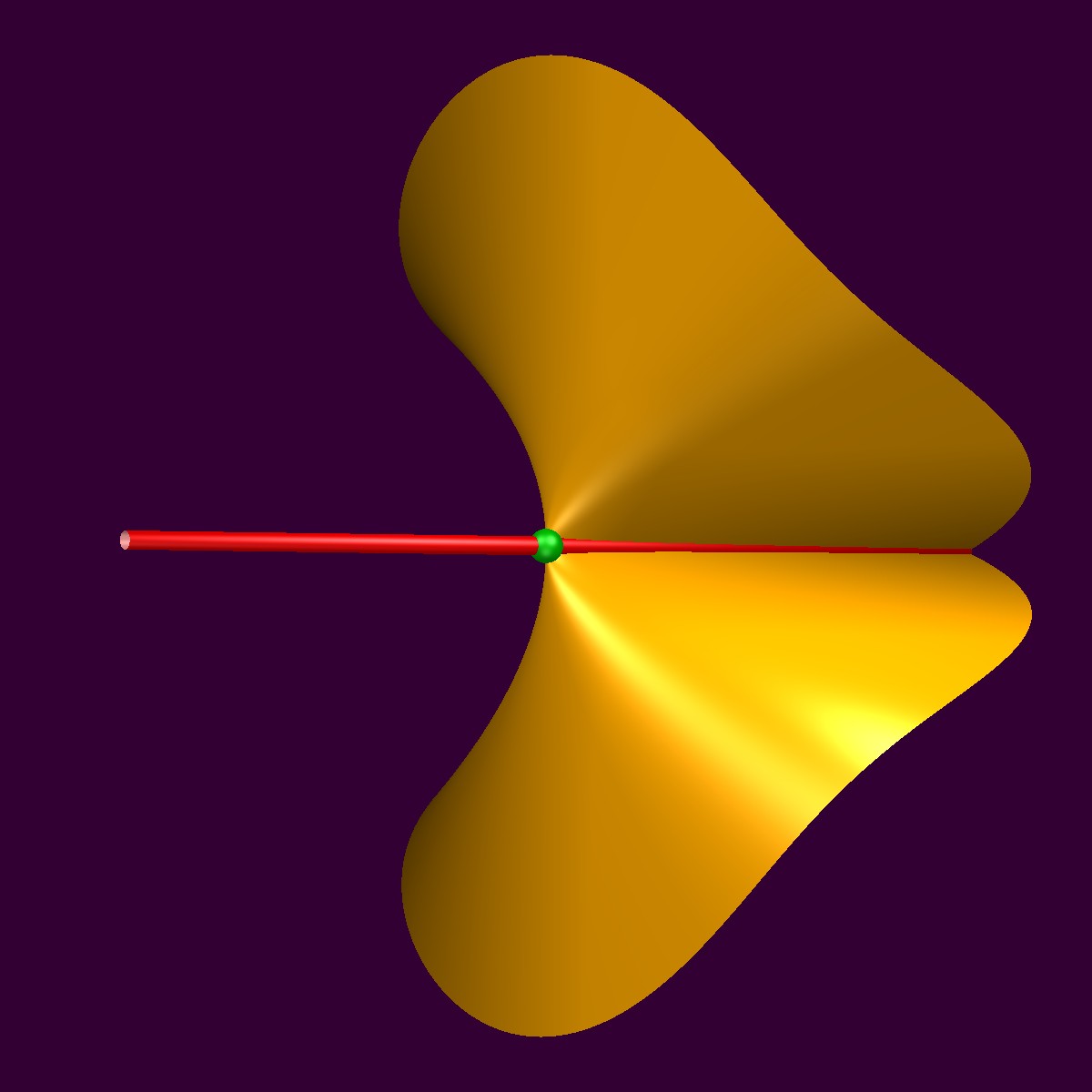}
  \includegraphics[scale=0.14]{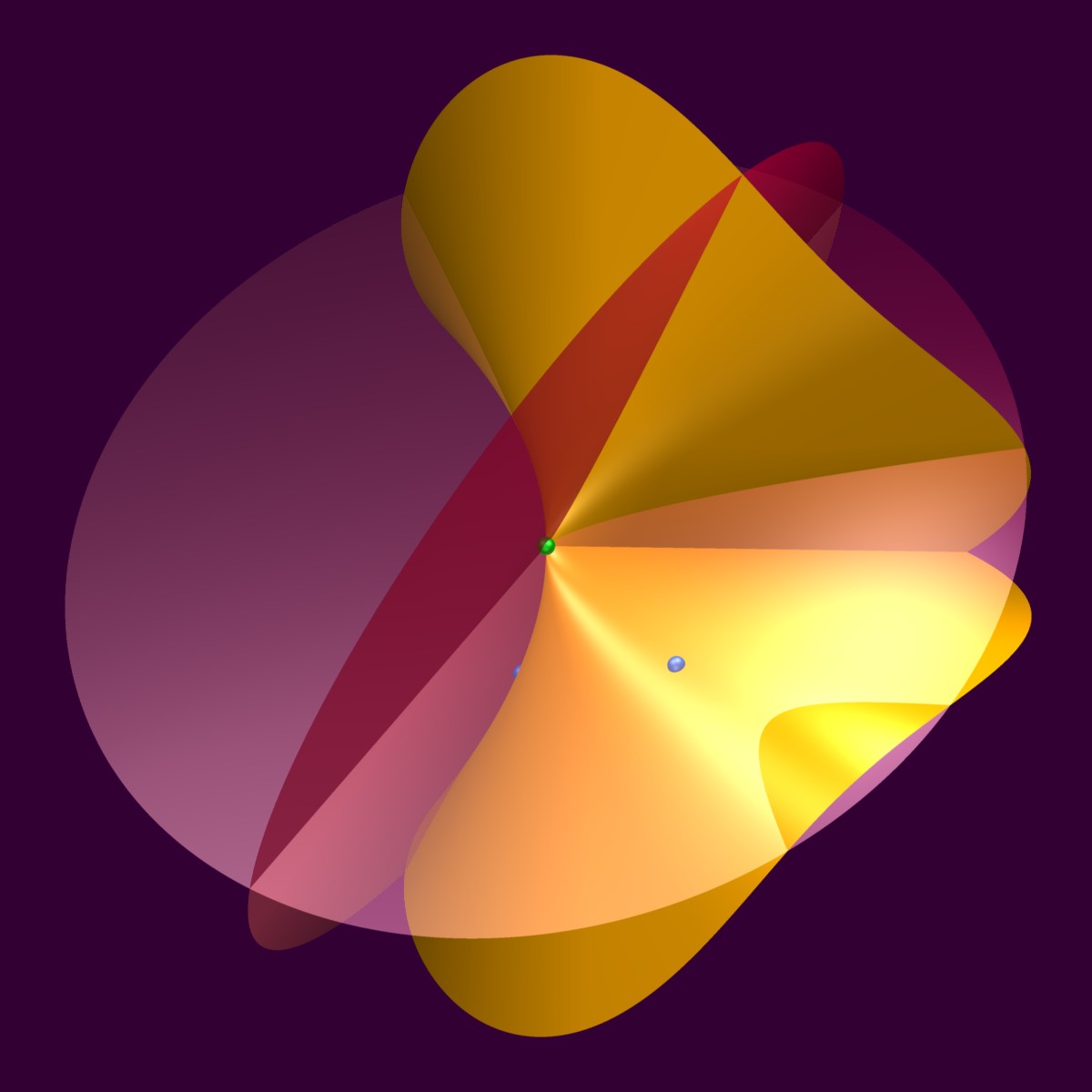}\\
  \vspace{1mm}
  \includegraphics[scale=0.14]{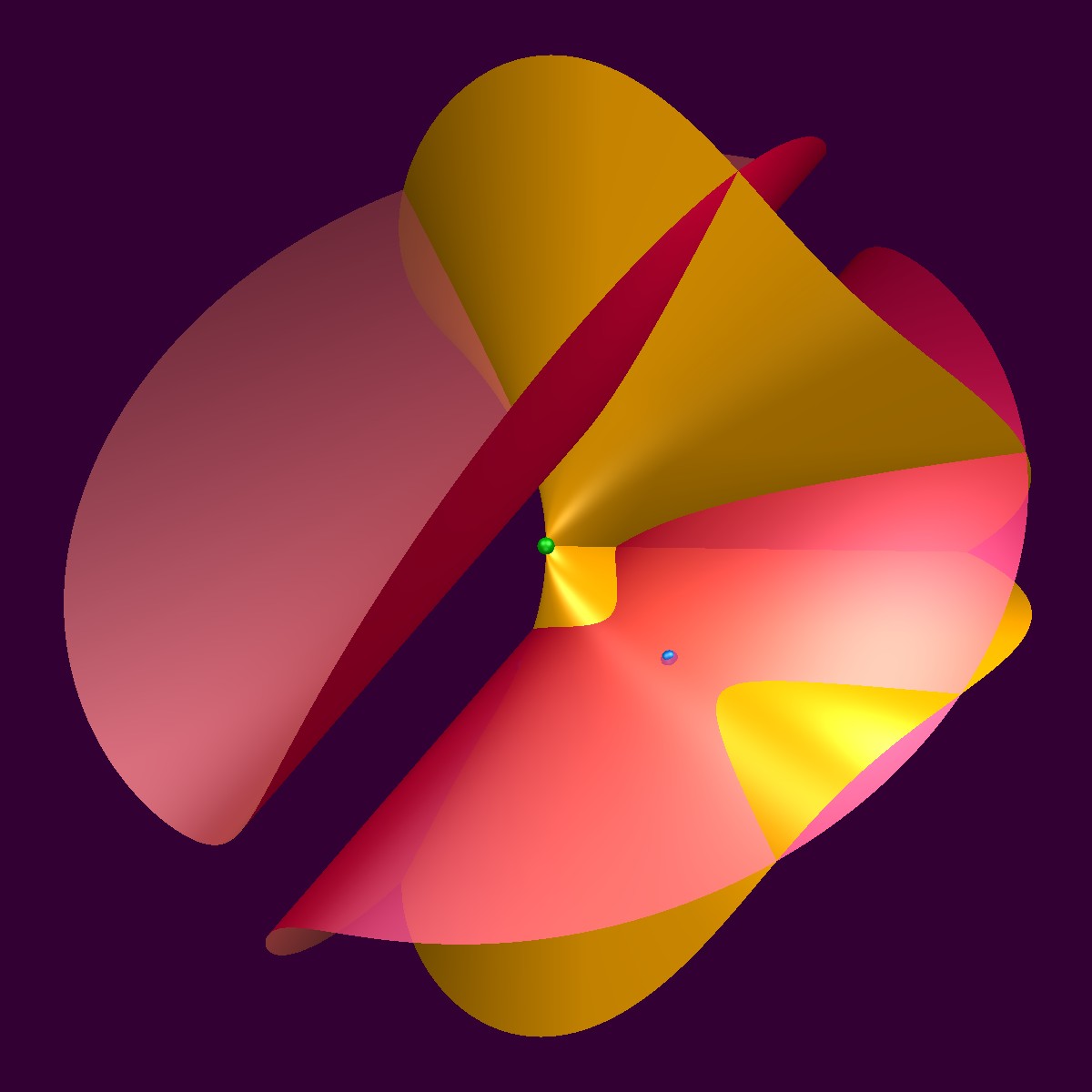}
  \includegraphics[scale=0.14]{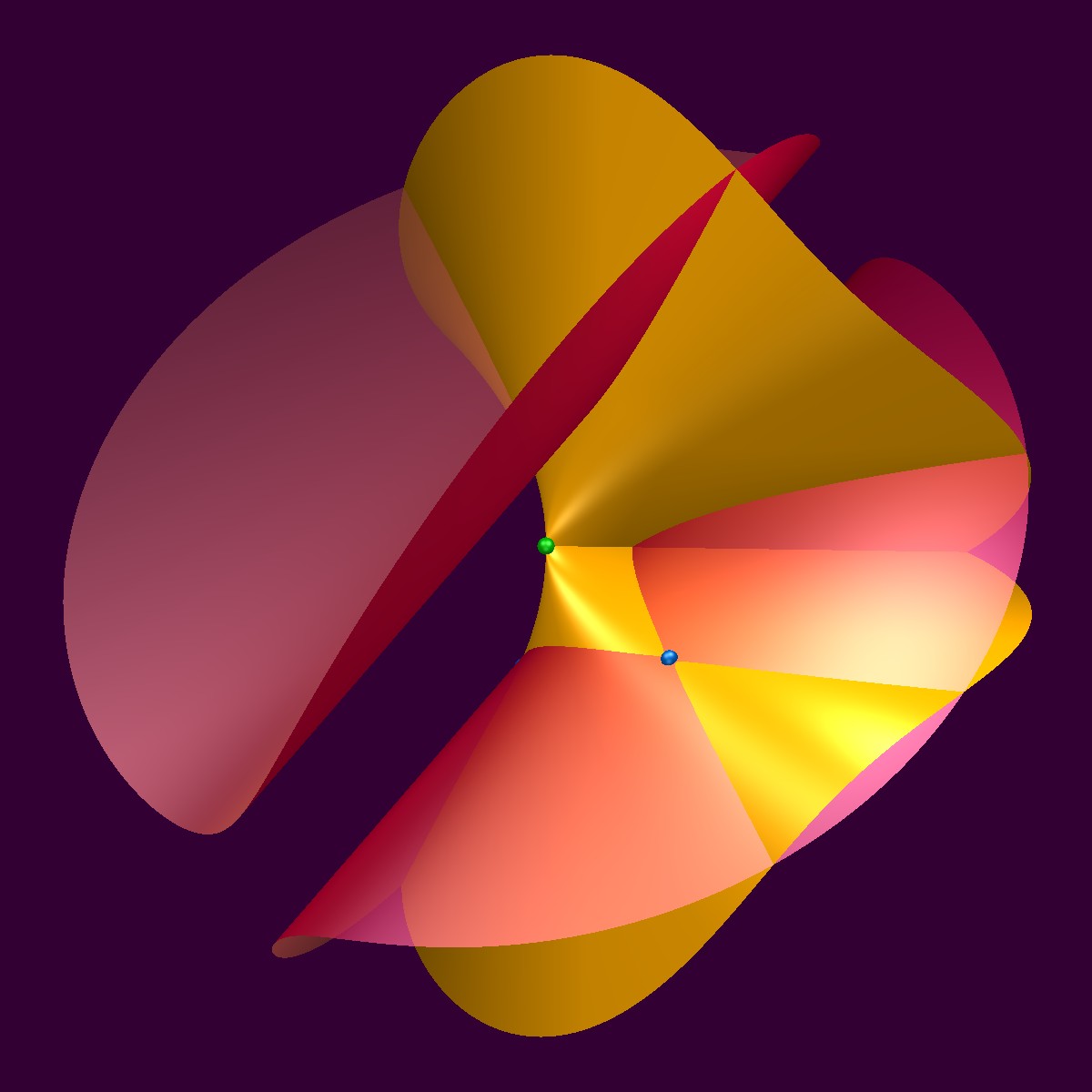}

  \caption{The Whitney Umbrella with 1) its three strata, 2) the zero level 
    of $f$ and the critical points of $f|X$, and level sets of $f$ of 3) a regular 
    value of $f|X$, and 4) a critical value of $f|X$.
  }
  \label{fig:WhitneyUmbrellaThreeStrata}
\end{figure}

\begin{example}
  \label{exp:Introduction}
  The following will serve us as a guiding example throughout this article.
  Let $X \subset \CC^3$ be the Whitney umbrella given by 
  the equation $h = y^2-xz^2 = 0$ and endowed with the stratification 
  \begin{eqnarray*}
    \mathscr S_0 &=& \{0\},\\
    \mathscr S_1 &=& \{y = z = 0 \} \setminus \mathscr S_0,\\
    \mathscr S_2 &=& X \setminus (\mathscr S_0 \cup \mathscr S_1 ).
  \end{eqnarray*}
  This stratification is known to satisfy the Whitney conditions A and B.

  As a function $f \colon \CC^3 \to \CC$ with isolated singularity on $(X,0)$ we consider 
  \[
    f(x,y,z) = y^2 - (x-z)^2.
  \]
  Note that $f$ does \textit{not} have isolated singularity on $\CC^3$.
  Its restriction $f|X$ to $X$, however, has only isolated critical points at 
  \[
    0 = \begin{pmatrix}
      0, & 0, & 0 
    \end{pmatrix} \quad 
    \text{and} \quad
    p_{6,7} = \begin{pmatrix}
      \frac{3}{2}, & \pm \frac{3}{\sqrt{2}}, & -3
    \end{pmatrix}.
  \]
  It will become clear later, why we label the last two of these points with indices $6$ and 
  $7$. We will usually have to neglect these points, since
  we are interested in the local behaviour of $f$ on the germ $(X,0)$ 
  of $X$ at the origin $0 \in \CC^3$. 
  As we shall see in Example \ref{exp:Morsification}, 
  we have 
  \[
    \mu_f(0;X,0) = 1, \quad \mu_f(1;X,0) = 1, \quad \mu_f(2;X,0) = 5 
  \]
  for the restriction $f|(X,0)$ at this point.
\end{example}

\section*{Acknowledgements}

The author is indebted to J. Seade for discussions on the homological 
index, the law of conservation of number, and Euler obstructions. 
These conversations took place at the meeting ENSINO V at 
Jo\~ao Pessoa, Brazil in July 2019 and the author wishes to also thank the 
organizers A. Menegon Neto, J. Snoussi, M. da Silva Pereira, and M. F. Zanchetta Morgado 
for the opportunity to give a course on determinantal 
singularities together with M.A.S. Ruas. The author would also like to thank 
J. Sch\"urmann for encouraging discussions during the conference ``Non-isolated singularities 
and derived geometry'' at Cuernavaca, Mexico the same year. It is the ideas 
from this period that cumulated in this article. The pictures for the examples 
were produced using the software ``Surfer'' by O. Labs et. al.

\section{Background and motivation}
\label{sec:Background}

Suppose the function $f$, 
the space $(X,0)$, and its stratification have been chosen as in 1') to 4') 
from 
Section \ref{sec:Summary}.
In \cite{SeadeTibarVerjovsky05}
the Euler obstruction $\Eu_f(X,0)$ of the function $f$ on $(X,0)$ plays 
the role of the $\mu_f(\alpha;X,0)$ for the top-dimensional stratum -- up to sign. 
The Euler obstruction of a function was introduced in 
\cite{BrasseletMasseyParameswaranSeade04} and it is defined as follows. 
Let $\nu \colon \tilde X \to X$ be the 
Nash modification of $(X,0)$. Then there always exists a continuous alteration $v$ 
of the gradient vector field $\grad f$ on $(\CC^n,0)$ which is tangent to
the strata of $(X,0)$, and a lift 
$\nu^* v$ of $v$ to the Nash bundle 
$\tilde T$ on $\tilde X$. Over the link $K = \partial B_\varepsilon \cap X$ of 
$(X,0)$ this lift is well defined as a non-zero section in $\tilde T$ up to 
homotopy. Now $\Eu_f(X,0)$ is the obstruction to extending $\nu^* v$ as 
a nowhere vanishing section to the interior of $\tilde X$.

To understand how our approach came about to also include 4') in this discussion,
we have to consider the article \cite{SeadeTibarVerjovsky05} in the context of a 
series of articles by various authors  
on different \textit{indices} of vector fields and $1$-forms on 
singular varieties. A thorough survey of the results from that time is 
\cite{EbelingGuseinZade06}.

One of these indices -- the GSV index of a vector field -- 
is particularly close to the idea of the Euler obstruction. 
The GSV index was first defined in 
\cite[Definition 2.1 ii)]{GomezMontSeadeVerjovsky91} for the following setup:

Let 
$(X,0) = (g^{-1}(\{0\}),0) \subset (\CC^{n+1},0)$
be an isolated hypersurface singularity 
and $v$ the germ of a 
vector field on $\left( \CC^{n+1},0 \right)$ which has an isolated zero 
at the origin and is tangent to 
$(X,0)$.
The GSV index $\Ind_{GSV}(v,X,0)$ of $v$ on $(X,0)$ is the obstruction 
to extending the section $v|K$ as a $C^\infty$-section of the 
tangent bundle from the link $K = X\cap \partial B_\varepsilon$ 
to the interior of the Milnor fiber $B_\varepsilon \cap g^{-1}(\{\delta\})$.
Here we deliberately identify the link $K$ with the boundary 
$\partial B_\varepsilon \cap g^{-1}(\{\delta\})$ of the Milnor fiber and 
the section $v|K$ with its image under this identification. 

In \cite{GomezMont98}, X. G\'omez-Mont introduces the \textit{homological index}
of a vector field $v$ on $(X,0)$ as above in order to compute the GSV index 
algebraically\footnote{Algebraic formulae for the GSV index of $v$ on $(X,0)$ 
  were also given in \cite{GomezMontSeadeVerjovsky91}, but 
only under the assumption that $v$ was also tangent to all fibers of $f$.}. 
It is defined as
$\Ind_{\hom}(v,X,0) = \chi(\Omega^\bullet_{X,0}, v)$, i.e. 
the Euler characteristic 
of the complex 
\[
  \xymatrix{
    0 & 
    \OO_{X,0} \ar[l] & 
    \Omega^1_{X,0} \ar[l]_v &
    \Omega^2_{X,0} \ar[l]_v &
    \cdots \ar[l]_v &
    \Omega^{n-1}_{X,0} \ar[l]_v &
    \Omega^{n}_{X,0} \ar[l]_v &
    0 \ar[l]_v
  }
\]
where $\Omega^p_{X,0}$ denotes the module of universally finite K\"ahler differentials 
on $(X,0)$ and $v$ is the homomorphism given by contraction of a 
differential form with the vector field $v$. Later on in his article, 
X. Gomez-Mont generalizes the GSV index in the obvious way to the setting 
of an arbitrary 
complex space $(X,0)$ with an isolated singularity and a fixed 
smoothing $X'$ of $(X,0)$. 
In \cite[Theorem 3.2]{GomezMont98} he proves that 
\begin{equation}
  \Ind_{GSV}(v,X,X') - \Ind_{\hom}(v,X,0) = k(X,X')
  \label{eqn:GSVvsHomologicalIndex}
\end{equation}
with $k(X,X')$ a constant depending only on $(X,0)$ and the chosen smoothing, 
i.e. independent of the vector field $v$. Finally, he shows in 
\cite[Section 3.2]{GomezMont98} that 
whenever $(X,0) = (g^{-1}(\{0\}),0)$ is an isolated hypersurface singularity 
with its canonical smoothing $X' = B_\varepsilon \cap g^{-1}(\{\delta\})$, 
$\varepsilon \gg \delta >0$, then $k(X,X') = 0$. 

From our point of view, the main novum in the approach by X. G\'omez-Mont 
was the introduction of derived geometry in this setting and its comparison 
with topological invariants. To prove Equation (\ref{eqn:GSVvsHomologicalIndex}), 
he procedes as follows. 

On the one hand, the GSV index is constant under small perturbations of $v$. 
This is immediate from the definition, since small perturbations of $v$ do not 
change the homotopy class of the non-zero section $v|K$. 
On the other hand, the homological index $\Ind_{\hom}(v,X,0)$ satisfies the 
\textit{law of conservation of number}, i.e. for suitable representatives 
and $\tilde v$ sufficiently close to $v$ one has 
\[
  \Ind_{\hom}(v,X,0) = \sum_{p\in X} \Ind_{\hom}(\tilde v, X,p).
\]
This is due to a technical but fundamental result 
based on derived geometry for the complex analytic setting from 
\cite{GiraldoGomezMont02} which states that, more generally, 
the Euler characteristic of a complex of coherent 
sheaves with finite dimensional cohomology satisfies the law of 
conservation of number.
To conclude the proof of Equation (\ref{eqn:GSVvsHomologicalIndex}), 
observe that at smooth points $p \in X_{\reg}$, the GSV index and the homological 
index coincide. Since the space of holomorphic vector fields on $(X,0)$ with 
isolated singularity at the origin is connected, 
the difference $\Ind_{GSV}(v,X,X') - \Ind_{\hom}(v,X,0)$ must be a 
constant $k(X,X')$ and in particular 
independent of the vector field $v$.

\medskip

In this article, we will not be dealing with vector fields, but with 
holomorphic $1$-forms. In fact, the original definition of the 
Euler obstruction by R. MacPherson in \cite{MacPherson74} was phrased 
in terms of radial $1$-forms and only later the use of vector fields 
became popular following the work of J.P. Brasselet and M.H. Schwartz 
\cite{BrasseletSchwartz81}.
The use of $1$-forms is more natural in the context of morsifications 
and it has several further advantages. For example, we can drop the
tangency conditions to $(X,0)$ which we had to impose on any vector field 
$v$.

It is straightforward -- and even easier --
to also define the Euler obstruction $\Eu^\omega(X,0)$ of 
a $1$-form $\omega$ with isolated zero on $(X,0)$: 
Again, let $\nu \colon \tilde X \to X$ be the Nash modification. 
Then there is a natural pullback $\nu^* \omega$ of $\omega$ to 
a section of the \textit{dual} of the Nash bundle and this section 
does not vanish 
on $\nu^{-1}(\partial B_\varepsilon \cap X)$ whenever $\omega$ 
has an isolated zero on $(X,0)$ in the stratified sense. 
The Euler obstruction of such an $\omega$ on $(X,0)$ is the obstruction 
to extending $\nu^* \omega$ as a nowhere vanishing section to the 
interior of $\tilde X$.

There is a natural notion of the homological index for a $1$-form 
$\omega$ with isolated zero on any purely $n$-dimensional complex analytic space $(X,0)$
with \textit{isolated} singularity. 
In \cite{EbelingGuseinZadeSeade04}, W. Ebeling, S.M. Gusein-Zade, and J. Seade define 
\[
  \Ind_{\hom}(\omega,X,0) = (-1)^n \chi(\Omega^\bullet_{X,0}, \omega\wedge - )
\]
where $\left( \Omega^\bullet_{X,0}, \omega \wedge - \right)$ is the complex
\begin{equation}
  \xymatrix{
    0 \ar[r] & 
    \OO_{X,0} \ar[r]^{\omega \wedge} & 
    \Omega^1_{X,0} \ar[r]^{\omega \wedge} & 
    \cdots \ar[r]^{\omega \wedge} & 
    \Omega^{n-1}_{X,0} \ar[r]^{\omega \wedge} & 
    \Omega^n_{X,0} \ar[r] & 
    0
  }
  \label{eqn:ComplexWedgeWithOmega}
\end{equation}
with differential given by the exterior multiplication with $\omega$.
Note that in case $(X,0) \cong (\CC^n,0)$ is smooth and $\omega = \D f$ 
is the differential of a function $f$ with isolated singularity on $(X,0)$, 
the homological index coincides with the classical Milnor number. 
This is due to the fact that the complex (\ref{eqn:ComplexWedgeWithOmega}) 
is the Koszul complex in the partial derivatives $\frac{\partial f}{\partial x_i}$ 
of $f$ which is known to be a free resolution of the Milnor algebra for 
an isolated hypersurface singularity.

When $(X,0)$ has isolated singularity, there is no immediate interpretation 
for the homological index of $\omega$ in terms of previously known 
invariants. However, it is relatively easy 
to see with the same reasoning as for indices of vector fields 
that the the difference 
\begin{equation}
  \Eu^{\omega}(X,0) - \Ind_{\hom}(\omega,X,0) = k'(X,0)
  \label{eqn:EulerObstructionVsHomologicalIndex}
\end{equation}
is also a constant, independent of the $1$-form $\omega$: 
The Euler obstruction $\Eu^{\omega}(X,0)$ is a homotopy invariant 
and $\Ind_{\hom}(\omega,X,0)$ satisfies 
the law of conservation of number. Suppose we have chosen a suitable 
representative $X$ of $(X,0)$ and a sufficiently small ball $B_\varepsilon$. 
Then for any a holomorphic $1$-form $\omega'$ on $X$ which has only isolated zeroes 
on the smooth part $X_{\reg}$ of $X$ and which is sufficiently close to the original $1$-form 
$\omega$, we have 
\begin{eqnarray*}
  & & \Eu^{\omega}(X,0) - \Ind_{\hom}( \omega, X,0) \\
  &=&  \sum_{p \in X \cap B_\varepsilon} \Eu^{\omega'}(X,p)  
  - \sum_{p \in X \cap B_\varepsilon} \Ind_{\hom}(\omega',X,p) \\
  &=& \Eu^{\omega'}(X,0) - \Ind_{\hom}(\omega',X,0) 
  + \sum_{p \in X_{\reg} \cap B_\varepsilon}
  \left( \Eu^{\omega'}(X,p) - \Ind_{\hom}(\omega',X,p) \right) \\
  &=& \Eu^{\omega'}(X,0) - \Ind_{\hom}(\omega',X,0). 
\end{eqnarray*}
This holds because, again, $\Eu^{\omega'}(X,p) = \Ind_{\hom}(\omega', X,p)$ at 
smooth points $p \in X_{\reg}$. The general claim now follows from the 
fact that the set of those holomorphic $1$-forms on $X$ with only 
only isolated zeroes on $X_{\reg}$ is open and connected. 

\medskip

There are other instances of very similar discussions. 
In \cite[Proposition 4.1]{EbelingGuseinZadeSeade04}, for example, there is a comparison 
of the homological index and the \textit{radial index} $\Ind_{\operatorname{rad}}(\omega,X,0)$
(cf. \cite[Definition 2.1]{EbelingGuseinZadeSeade04})
of a $1$-form $\omega$ with isolated zero on an equidimensional complex analytic space 
$(X,0)$ with isolated singularity. Their difference is an 
invariant $\nu(X,0)$ which coincides with the Milnor number 
of $(X,0)$ whenever $(X,0)$ is an isolated complete intersection 
singularity.

For the same setting there is another comparison 
of the radial index and the 
Euler obstruction $\Eu^{\omega}(X,0)$ 
in \cite[Proposition 4]{EbelingGuseinZade05}. In this case
\begin{equation}
  \Eu^{\omega}(X,0) 
  - \Ind_{\operatorname{rad}}(\omega,X,0) 
  = (-1)^{\dim(X,0)} \overline \chi ( \mathcal L(X,0) ),
  \label{eqn:RadialIndexVsEulerObstruction}
\end{equation}
where $\mathcal L(X,0)$ denotes the complex link of $(X,0)$ 
(see e.g. \cite{GoreskyMacPherson88}) and $\overline \chi$ 
is the reduced topological Euler characteristic.

\medskip

Coming back to the comparison of $\Eu^{\omega}(X,0)$ with 
$\Ind_{\hom}(\omega,X,0)$ in Equation (\ref{eqn:EulerObstructionVsHomologicalIndex}), 
the introduction of 
$k'(X,0)$ as a new invariant of the germ $(X,0)$ seems to be rather 
unmotivated. Instead
we propose 
a modification of the homological index 
in Section \ref{sec:TheMilnorNumberAsAHomologicalIndex}
which directly computes the Euler obstruction. 
This is Theorem \ref{thm:MainTheorem}.
The new homological index will be based on the Nash modification 
$\nu \colon \tilde X \to X$ of $(X,0)$ and the complex of sheaves 
$\left( \tilde \Omega^\bullet, \nu^* \D \omega \wedge - \right)$ 
on $\tilde X$ rather than 
$\left(\Omega_{X,0}^\bullet, \omega \wedge - \right)$. For 
the definition of this complex see Sections \ref{sec:EulerObstruction} and 
\ref{sec:TheMilnorNumberAsAHomologicalIndex}.
The direct computation of $\Eu^{\omega}(X,0)$ as an 
Euler characteristic of finite $\OO_n$-modules comes  
at the price that one has to take the derived pushforward along 
$\nu$ of the complex of sheaves 
$\left( \tilde \Omega^\bullet, \nu^* \D \omega \wedge - \right)$.
However, as a side effect of this, we may drop the 
assumption on $(X,0)$ to have only isolated singularity.

\section{Generalizations of the Milnor number}
\label{sec:GeneralizationsOfTheMilnorNumber}

We briefly recall the necessary definitions of singularity 
theory on stratified spaces, cf. \cite{Le87}.
Let $U \subset \CC^n$ be an open domain, $X \subset U$ a 
closed, reduced, equidimensional 
complex analytic set and $f \colon U \to \CC$ a holomorphic function. 

\begin{definition}
  \label{def:StratifiedRegularPoint}
  Suppose $S = \{ \mathscr S_\alpha \}_{\alpha \in A}$ is a complex analytic 
  Whitney stratification of $X$. 
  A point $p \in X$ is a \textit{regular point} 
  of $f|X$ in the stratified sense, if it is a regular point of the restriction 
  $f|\mathscr S_\alpha$ of $f$ to the stratum $\mathscr S_\alpha$ containing $p$.
\end{definition}

The existence of complex analytic Whitney stratifications was shown by H. Hironaka \cite{Hironaka77}.
In \cite[Corollaire 6.1.8]{LeTeissier81} L\^e D.\,T. 
and B. Teissier constructed a canonical Whitney stratification 
for reduced, equidimensional complex analytic spaces, and in \cite{Teissier82} it was 
shown that this stratification is minimal. Whenever one of these strata consists of several 
components, we shall in the following consider each one of these components as a 
stratum of its own and -- unless otherwise specified -- 
use this stratification on any given reduced equidimensional 
complex analytic space $X$.

\begin{definition}
  \label{def:IsolatedSingularityOfAFunction}
  We say that $f$ has an \textit{isolated singularity} at $(X,p)$, 
  if there exists a neighborhood $U'$ of $p$ such that all points $x \in U' \cap X \setminus \{p\}$ 
  are regular points of $f$ in the stratified sense for the canonical Whitney stratification 
  of $X$.
\end{definition}

We give a brief definition of the Milnor fibration of $f|(X,p)$ in this 
setting. Let $B_\varepsilon$ be the ball of radius $\varepsilon$ around $p$ 
in $\CC^n$. By virtue of the Curve Selection Lemma, there exists $\varepsilon_0>0$ 
such that for every $\varepsilon_0 \geq \varepsilon >0$ the intersections 
$\partial B_\varepsilon \cap X$ and $\partial B_\varepsilon \cap X \cap f^{-1}(\{f(p)\})$ 
are transversal. Fix one such $\varepsilon>0$. Then for sufficiently small 
$\varepsilon \gg \delta >0$ the restriction of $f$ 
\begin{equation}
  f \colon B_\varepsilon \cap X \cap f^{-1}(D^*_\delta) \to D^*_\delta
  \label{eqn:MilnorFibration}
\end{equation}
is a proper $C^0$-fiber bundle 
over the punctured disc $D_\delta^* \subset \CC$ of radius $\delta>0$ around $f(p)$ -- 
the Milnor fibration of $f|(X,p)$. The fiber 
\[
  M_{f|(X,p)} = B_\varepsilon \cap X \cap f^{-1}(\{\delta\})
\]
is unique up to homeomorphism and thus an invariant of $f|(X,p)$. 

\begin{remark}
  Many authors prefer to work with a holomorphic function $g \colon (X,p) \to (\CC,g(p))$ 
  instead of an embedding $\iota \colon (X,p) \hookrightarrow (\CC^n,p)$ and a restriction 
  $f|(X,p)$ of a function $f \colon (\CC^n,p) \to (\CC,f(p))$. It is clear that 
  for every $g$ one can find $\iota$ and $f$ such that $f|(X,p) = g$. 
  Moreover, 
  it can be shown that the canonical stratification of $(X,p)$ does not depend 
  on the embedding \cite{Le87}. Neither does the Milnor fiber $M_g = M_{f|(X,p)}$.
  If the reader intends to start from $g$ defined on $(X,0)$, 
  he/she is supposed to make the necessary translations throughout the rest of the 
  article.
\end{remark}

\subsection{Morsifications}
\label{sec:Morsifications}
For functions on stratified spaces the most simple singularities are the 
\textit{stratified} Morse critical points.
They generalize the classical Morse critical points 
of a holomorphic function in the sense that every function $f$ with an isolated 
singularity on $(X,p)$ can be deformed to a function with finitely many stratified 
Morse critical points on $X$, cf. Corollary \ref{cor:ExistenceOfMorsifications}. 
Thus, they are the basic building blocks for the study of isolated singularities 
on stratified spaces. 

\begin{definition}(cf. \cite[Section 2.1]{GoreskyMacPherson88})
  \label{def:StratifiedMorseCriticalPoint}
  A point $p \in \mathscr S_\alpha \subset X$ is a stratified Morse critical 
  point of $f|X$ if 
  \begin{enumerate}[i)]
    \item the point $p$ is a classical Morse critical point of the restriction 
      $f|\mathscr S_\alpha$ of $f$ to the stratum $\mathscr S_\alpha$. 
    \item the differential $\D f(p)$ of $f$ at $p \in \CC^n$ does not 
      annihilate any limiting 
      tangent spaces $T \subset T_p \CC^n$ from other adjacent strata 
      $\mathscr S_\beta$ of $X$ at $p$.
  \end{enumerate}
\end{definition}

Consider a point $p \in U$ and the germ $f \colon (\CC^n,p) \to (\CC,f(p))$ 
of $f$ at $p$. 
An \textit{unfolding} of $f$ at $p$ is a map germ 
\[
  F \colon (\CC^n \times \CC, (p,0)) \to (\CC \times \CC,(f(p),0)), \quad 
  (x,t) \mapsto (f_t(x),t)
\]
such that $f= f_0$. It is clear that whenever $p\in X$,
any unfolding of $f$ induces an unfolding $F|(X,p)$ of $f|(X,p)$.

\begin{definition}
  \label{def:Morsification}
  Let $(X,p) \subset (\CC^n,p)$ be a reduced complex analytic 
  space and $f \colon (\CC^n,p) \to (\CC,f(p))$ a holomorphic function 
  with isolated singularity on $(X,p)$.
  An unfolding $F$ of $f$ induces a \textit{morsification} of $f|(X,p)$, if 
  there exists an open neighborhoods $V \subset \CC^n$ of $p$ 
  and an open disc $T \subset \CC$ around the origin 
  such that $f_t|X$ has only Morse critical 
  points in $X \cap V$ for all $0 \neq t \in T$.
\end{definition}

For the existence of morsifications and the density of Morse 
functions in the stratified setting see for example \cite{GoreskyMacPherson88}. 
We will 
usually take $f_t(x) = f(x) - t \cdot l(x)$ 
for a generic linear form $l \in \Hom(\CC^n,\CC)$, 
cf. Corollary \ref{cor:ExistenceOfMorsifications}. 

\medskip
We may choose the open neighborhood $V$ in Definition \ref{def:Morsification}
to be an open Milnor ball 
$B_\varepsilon$ for $f|(X,p)$. Then for $t = \eta \neq 0$ sufficiently small, 
all Morse critical points of $f_\eta$ on $X \cap B_\varepsilon$ arise 
from the original singularity of $f_0$ at $0 \in X$ and we can 
count the number of Morse critical points of $f_\eta$ on each 
stratum $\mathscr S_\alpha$ in $X \cap B_\varepsilon$.

\begin{definition}
  \label{def:MuFAlpha}
  We define the numbers $\mu_f(\alpha;X,0)$ of $f|(X,p)$ 
  to be the number of Morse critical 
  points on the stratum $\mathscr S_\alpha$ in a morsification of $f|(X,p)$.
\end{definition}

These numbers clearly depend on the choice of the stratification. However, 
it follows from \cite[Proposition 2.3]{SeadeTibarVerjovsky05}, that they 
do not depend on the choice of the morsification $F|(X,p)$ of $f|(X,p)$. 
This fact will also be a consequence of Theorem \ref{thm:MainTheorem}.
  
\begin{example}
  \label{exp:Morsification}
  We continue with Example \ref{exp:Introduction}.
  As a morsification of $f|(X,0)$ we may choose
  \[
    f_t (x,y,z) = y^2 - (x-z)^2 - t(x+2z).
  \]
  Clearly, $\mu_f(0;X,0) = 1$, because $\mathscr S_0$ is a one-point stratum and 
  any such point is a critical point of a function $f$ in the stratified 
  sense.

  \begin{figure}[h]
    \centering
    \includegraphics[scale=0.14]{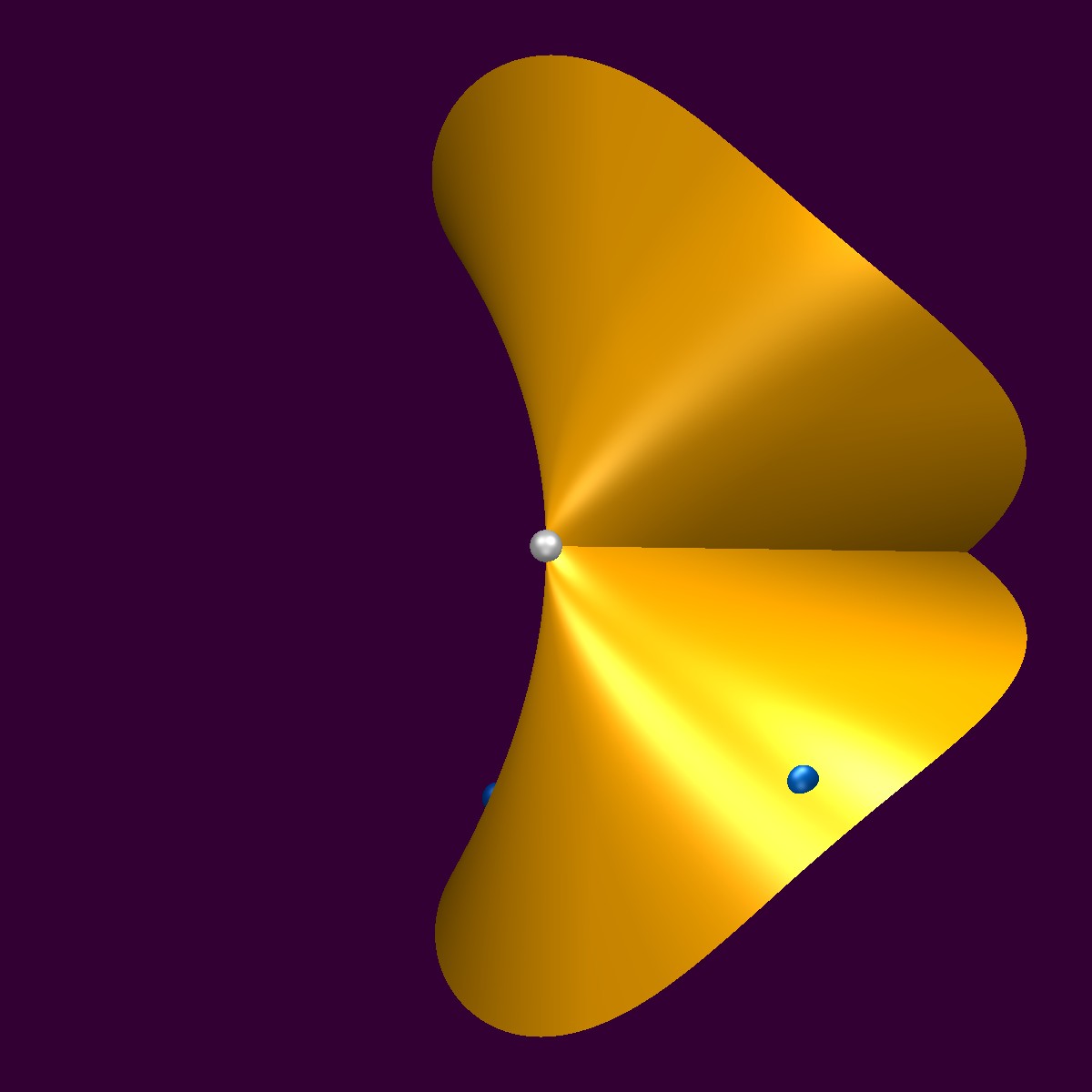}
    \includegraphics[scale=0.14]{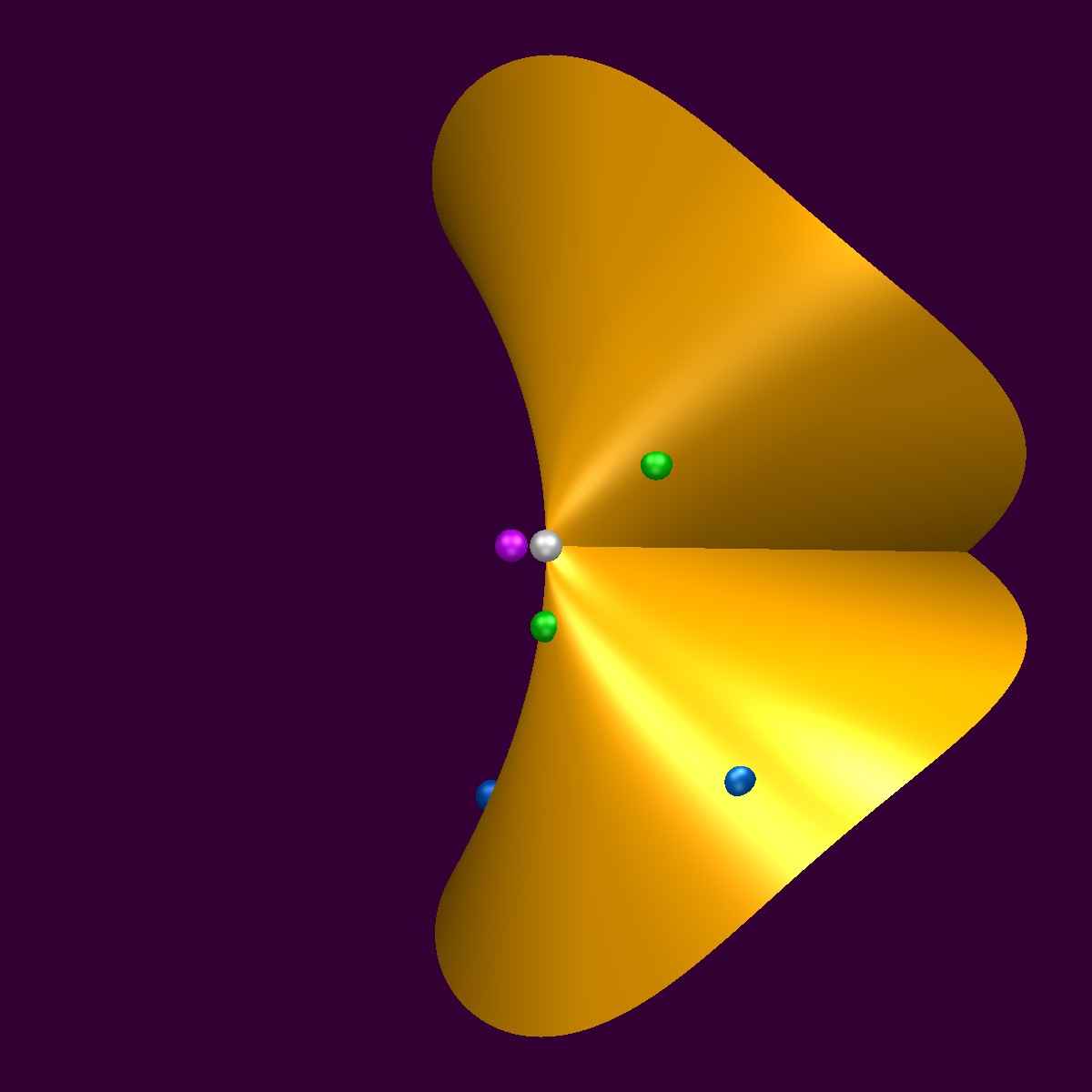}
    \caption{Closeup of the critical points of $f|X$ on the left 
    and those of $f_\eta|X$ for $\eta=1$ on the right hand side.}
    \label{fig:ZoomCriticalPoints}
  \end{figure}

  On $\mathscr S_1$ the function $f_\eta$ has exactly one 
  Morse critical point for $\eta \neq 0$. 
  This can be verified by classical methods: Note that 
  $X_1 = \overline{\mathscr S_1}$ is smooth and the restriction of $f$ to $X_1$ 
  is an ordinary $A_1$ singularity. The given morsification is moving this critical 
  point -- depicted in purple in Figure \ref{fig:ZoomCriticalPoints} --
  from $x=0$ to $x=-t/2$ so that for $t\neq 0$ it really lies in the stratum 
  $\mathscr S_1$.

  In order to compute $\mu_f(2;X,0)$ let 
  \[
    \Gamma = \overline{\{ (x,t) \in X_{\reg} \times \CC : 
      x \text{ is a critical point of $f_t$ on $X_{\reg}$ } \}}
  \]
  be the \textit{global} curve of critical points 
  of $f_t$ on the regular part $X_{\reg} = \mathscr S_2$ of the 
  whole affine variety $X \subset \CC^3$.
  Using a computer algebra system, one can verify that $\Gamma$ has seven branches. 
  Five of these branches 
  \[
    \Gamma_1(t) =
    \begin{pmatrix}
      0, \\
      0, \\ 
      -t
    \end{pmatrix}, \qquad
    \Gamma_{2,3}(t) =
    \begin{pmatrix}
      \sqrt{t}\\
      \pm \sqrt[4]{t^3} \\
      \sqrt{t}
    \end{pmatrix}, \qquad 
    \Gamma_{4,5} = 
    \begin{pmatrix}
      -\sqrt{t}\\
      \pm i\sqrt[4]{t^3} \\
      -\sqrt{t}
    \end{pmatrix}
  \]
  pass through the origin $0 \in \CC^3$, i.e. they arise 
  from the critical point of $f$ on $(X,0)$. Note that $\Gamma_{4,5}(t)$ does not 
  have real coordinates for $t \in \RR \setminus\{0\}$, so we will not be able to 
  illustrate these branches in real pictures. Nevertheless, the behaviour of 
  $\Gamma_{4,5}(t)$ is symmetric to what happens with the real branches $\Gamma_{2,3}(t)$.
  Each one of these branches corresponds to a Morse critical 
  point of $f_t$ on $\mathscr S_2 \subset X$ and we drew them as 
  green dots in the picture on the right of 
  Figure \ref{fig:ZoomCriticalPoints}. Thus we have
  \[
    \mu_f(0;X,0) = 1, \quad \mu_f(1;X,0) = 1, \quad \mu_f(2;X,0) = 5.
  \]
  
  The remaining two branches 
  \[
    \Gamma_{6,7}(t) =
    \begin{pmatrix}
      \frac{3}{2} - \frac{t}{2}\\
      \pm \sqrt{\frac{27 - 9t}{2}}\\
      -3
    \end{pmatrix}
  \]
  are swept out from the points $p_6$ and $p_7$ and do not contribute to the 
  number $\mu_f(2;X,0)$ of $f|(X,0)$ at the origin. They correspond to the 
  blue dots in Figure \ref{fig:ZoomCriticalPoints}.
\end{example}

\subsection{Homology decomposition for the Milnor fiber}

The Milnor fiber $M_{f|(X,0)}$ of a holomorphic function $f$ on 
a complex analytic space $(X,0) \subset (\CC^n,0)$ is 
by construction a topologically stable object: 
By virtue of Thom's Isotopy Lemma,
small perturbations 
of the defining equation $f$ do not alter $M_{f|(X,0)}$ up 
to homeomorphism. 
Consequently, in a morsification $F = (f_t,t)$ of $f|(X,0)$
we may identify the Milnor fiber $M_{f|(X,0)}$ 
\[
  M_{f|(X,0)} = B_\varepsilon \cap X \cap f^{-1}(\{\delta\}) \cong 
  B_\varepsilon \cap X \cap f_\eta^{-1}(\{\delta\})
\]
and the generic fiber 
$B_\varepsilon \cap X \cap f_\eta^{-1}(\{\delta\})$ of $f_\eta$ 
for suitable choices of $\varepsilon \gg \delta \gg \eta > 0$. 
For the previous example this is illustrated in the first two pictures 
of Figure \ref{fig:MorsificationProcessI}.

\begin{figure}[h]
  \centering
  \includegraphics[scale=0.14]{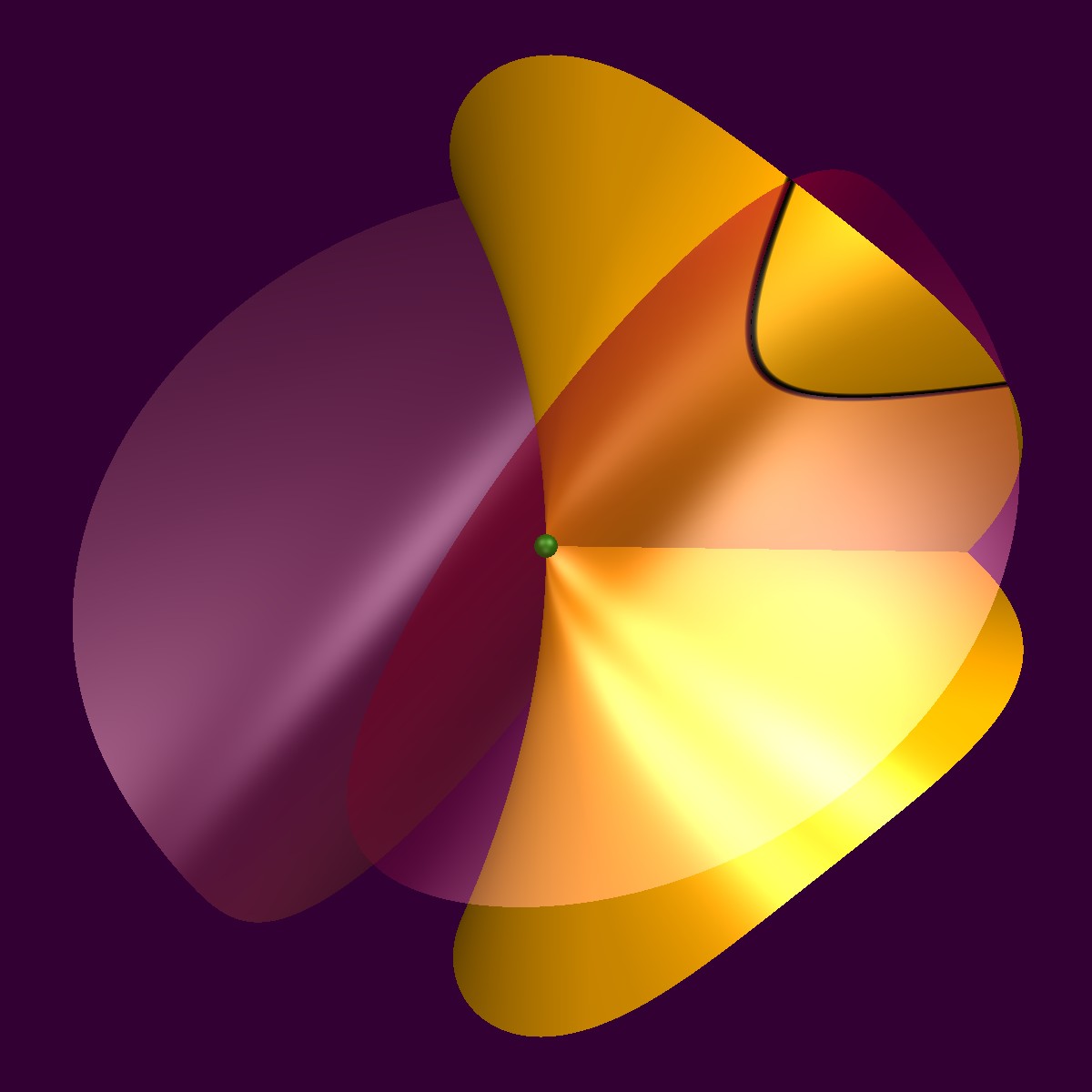}
  \includegraphics[scale=0.14]{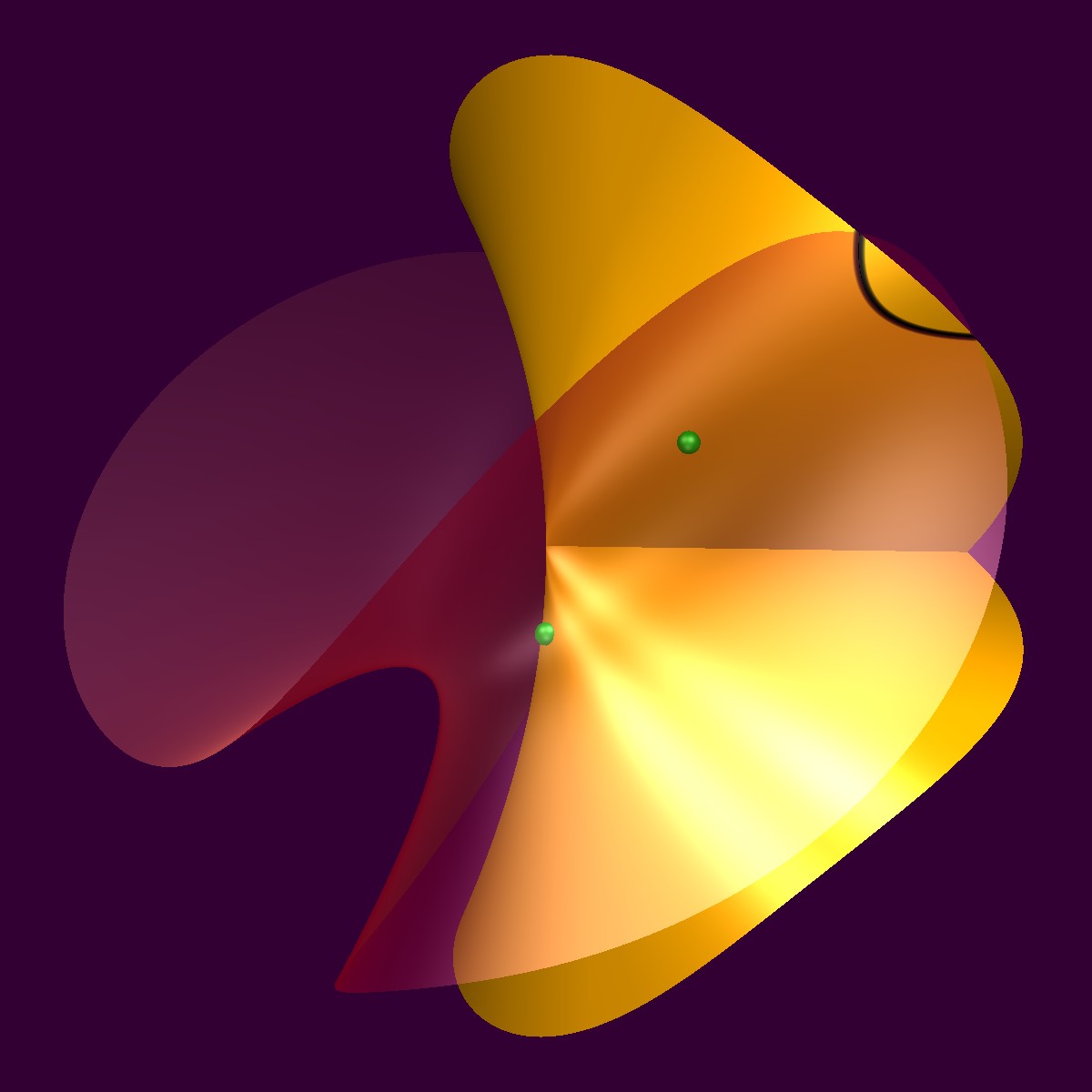}\\
  \vspace{1mm}
  \includegraphics[scale=0.14]{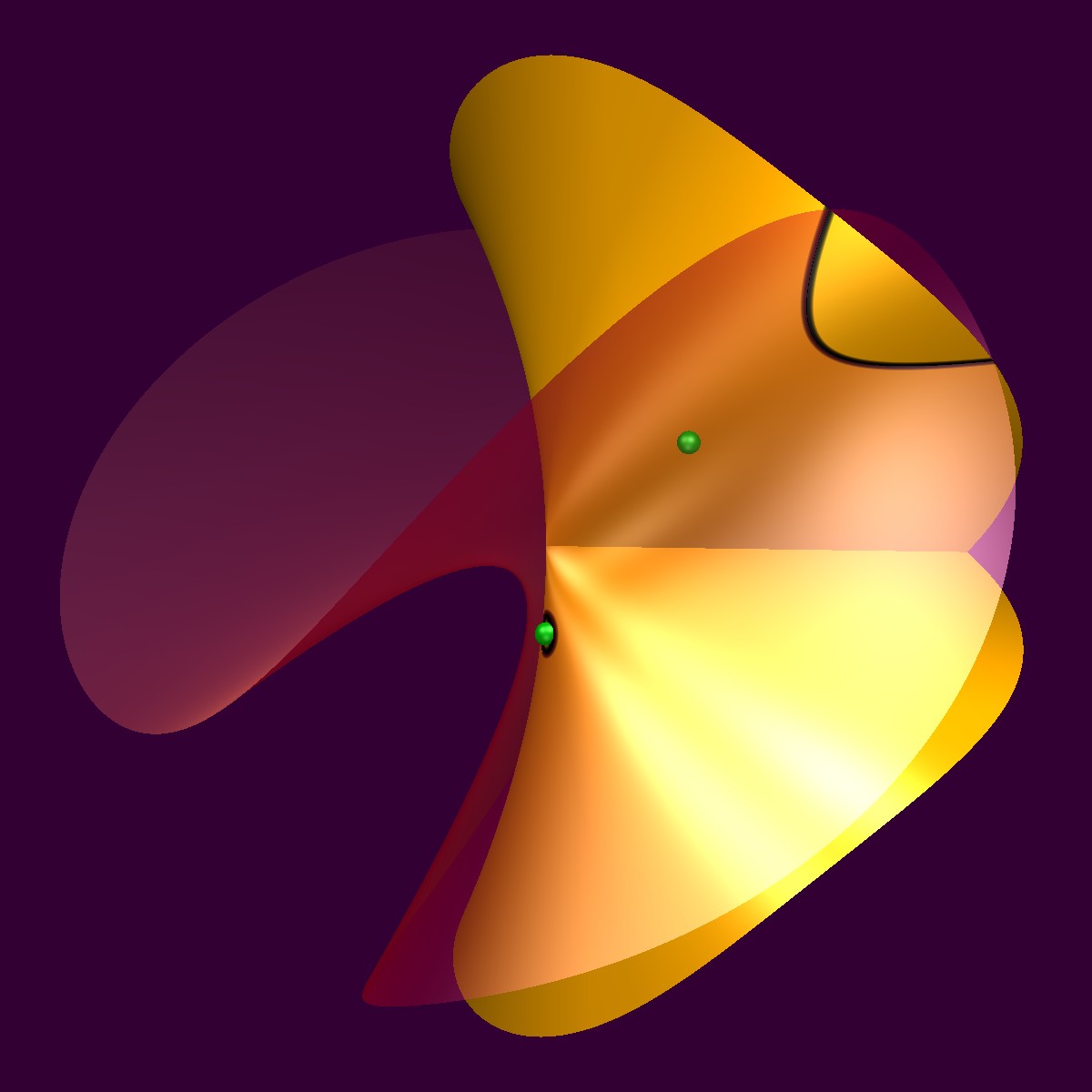}
  \includegraphics[scale=0.14]{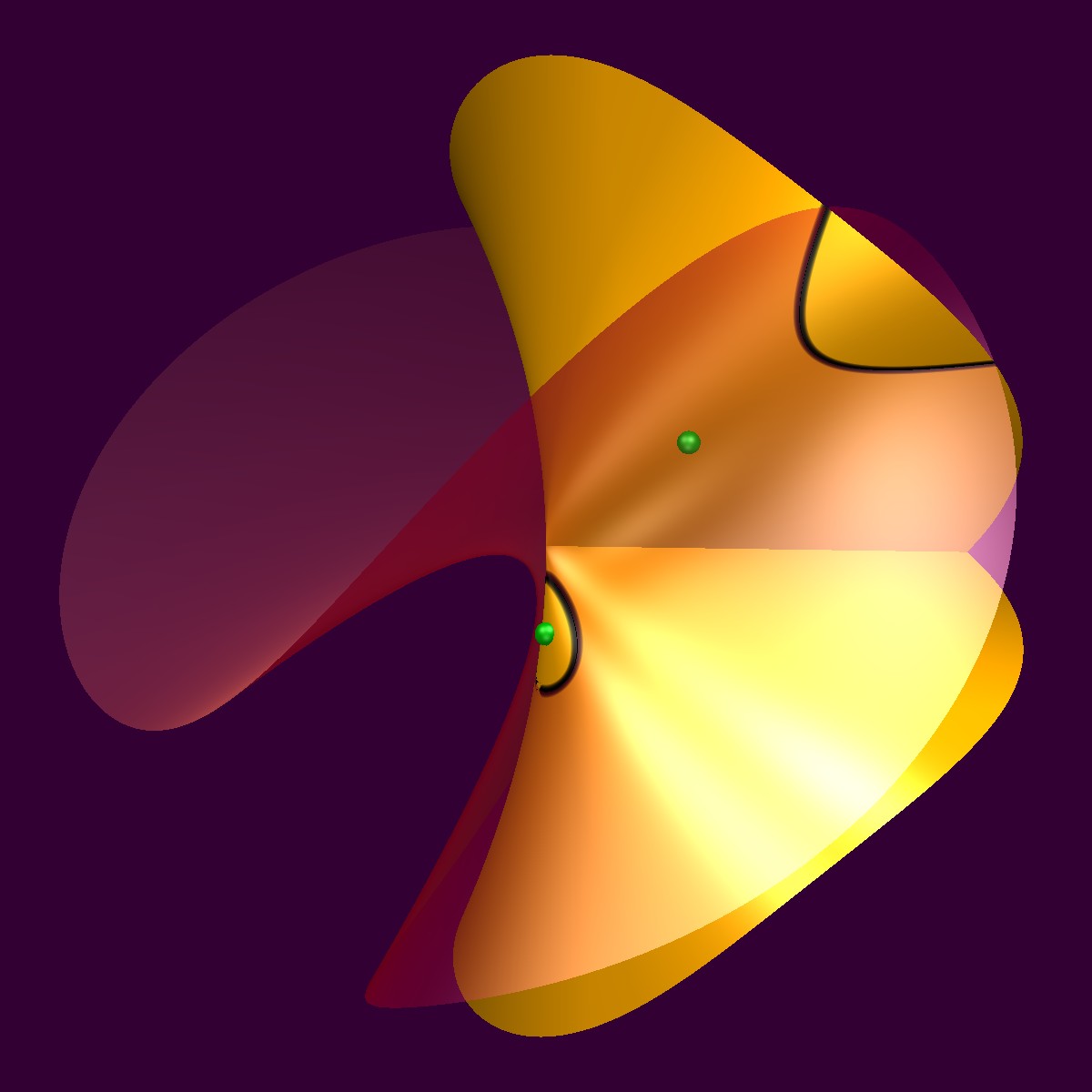}\\
  \caption{Morsification of $f|X$ in a Milnor ball: 1) the Milnor fiber of $f|X$, 
  2) the same fiber of $f_\eta|X$. 3) and 4) depict passing the first critical value of $f_\eta|X$. }
  \label{fig:MorsificationProcessI}
\end{figure}

It is a straightforward exercise to transfer the classical theory of 
morsifications (see e.g. \cite{ArnoldGuseinZadeVarchenkoVolII}) 
to this setting and use stratified Morse theory \cite[Part II]{GoreskyMacPherson88}
to deduce the following homology decomposition 
for the Milnor fiber:

\begin{proposition}
  \label{prp:HomologyDecompositionStratifiedMorsification}
  Let $(X,0) \subset (\CC^n,0)$ be a complex analytic space,
  $S = \{\mathscr S_\alpha\}_{\alpha \in A}$ a complex analytic Whitney 
  stratification of $X$ with connected strata, $\mathcal L(X,\mathscr S_\alpha)$
  the complex link of $X$ along the stratum $\mathscr S_\alpha$, 
  $C(\mathcal L(X, \mathscr S_\alpha))$ the real cone over it, 
  \[
    f \colon (\CC^n,0) \to (\CC,0)
  \]
  a holomorphic function with an isolated singularity on $(X,0)$ 
  in the stratified sense, and $M_{f|(X,0)}$ its Milnor fiber on $X$. 
  Then the reduced homology of the Milnor fiber decomposes as 
  \begin{equation}
    \tilde H_\bullet\left(M_{f|(X,0)} \right) \cong 
    \bigoplus_{\alpha \in A} \bigoplus_{i=1}^{\mu_f(\alpha;X,0)} 
    H_{\bullet-d(\alpha)+1}\left( C(\mathcal L (X,S_\alpha)), \mathcal L(X,S_\alpha)\right),
    \label{eqn:HomologyDecompositionMilnorFiber}
  \end{equation}
  where $d(\alpha) = \dim( \mathscr S_\alpha )$ is the complex dimension 
  of the stratum $\mathscr S_\alpha$  and $\mu_f(\alpha;X,0)$ the number of 
  Morse critical points on $\mathscr S_\alpha$ in a morsification of $f$.
\end{proposition}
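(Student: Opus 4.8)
The plan is to transfer the classical morsification argument for the Milnor fiber of an isolated hypersurface singularity (see e.g.\ \cite[Chapter~2]{ArnoldGuseinZadeVarchenkoVolII}, cf.\ also \cite{SeadeTibarVerjovsky05}) almost verbatim to the stratified setting, using stratified Morse theory \cite[Part~II]{GoreskyMacPherson88} in place of ordinary Morse theory: the vanishing spheres of the classical picture are replaced by the local Morse data at the stratified Morse critical points of a morsification, and the Morse Data Decomposition Theorem splits each of these into a tangential and a normal contribution, the latter being governed by the complex link.

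First I would fix a Milnor ball $B_\varepsilon$ for $f|(X,0)$, a morsification $f_\eta(x) = f(x) - \eta\, l(x)$ with $l$ a generic linear form, and radii $\varepsilon \gg \delta \gg |\eta| > 0$, and identify $M_{f|(X,0)} \cong Y := B_\varepsilon \cap X \cap f_\eta^{-1}(\{s_0\})$ for a regular value $s_0$ of $f_\eta|_{B_\varepsilon \cap X}$ with $0 < |s_0| < \delta$; topological stability (Thom's isotopy lemma) guarantees that the small perturbation $f \rightsquigarrow f_\eta$ does not change the Milnor fiber up to homeomorphism. Since $f|(X,0)$ has an isolated singularity in the stratified sense, a Milnor-type argument using the curve selection lemma on each stratum shows that for $|\eta|$ small enough all stratified critical points of $f_\eta$ in $\overline{B_\varepsilon} \cap X$ lie in the open ball, are stratified Morse in the sense of Definition~\ref{def:StratifiedMorseCriticalPoint}, and have critical values in a disc of radius $\ll \delta$ about $0$; choosing $l$ generic I may moreover take the critical values $v_1, \dots, v_k$ to be pairwise distinct, so that each $v_j$ is attained at a unique stratified Morse point $p_j$, lying on a single stratum $\mathscr S_{\alpha_j}$.

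Next I would run the sweep. Put $E := B_\varepsilon \cap X \cap f_\eta^{-1}(\overline{D_\delta})$; a standard Milnor-tube argument (the isotopy $f \rightsquigarrow f_\eta$ combined with the conical structure of $B_\varepsilon \cap X \cap f^{-1}(\overline{D_\delta})$) shows that $E$ is contractible. Fixing a system of arcs in $\overline{D_\delta}$ joining $s_0$ to the $v_j$ and meeting only at $s_0$, and following the fibres $f_\eta^{-1}(s)$ along them, the main theorem of stratified Morse theory shows that $E$ is obtained from a collar $Y \times [0,1]$ of $Y$ by attaching, at each $p_j$, the local Morse data of $f_\eta$ at $p_j$; by the Morse Data Decomposition Theorem \cite[Part~II]{GoreskyMacPherson88} this local Morse data is the product of its tangential and normal parts, so by excision $H_\bullet(E,Y)$ is the direct sum over $j$ of the homologies of these product pairs. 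It then remains to identify the two factors. At $p_j$ the holomorphic function $f_\eta|\mathscr S_{\alpha_j}$ has an ordinary nondegenerate critical point on the complex $d(\alpha_j)$-manifold $\mathscr S_{\alpha_j}$, so, exactly as in complex Picard--Lefschetz theory, its tangential Morse data is homotopy equivalent as a pair to $(B^{d(\alpha_j)}, S^{d(\alpha_j)-1})$; the complex-analytic refinement of stratified Morse theory identifies the normal Morse data at a stratified Morse point on $\mathscr S_{\alpha_j}$ with the pair $\bigl( C(\mathcal L(X,\mathscr S_{\alpha_j})), \mathcal L(X,\mathscr S_{\alpha_j}) \bigr)$ consisting of the cone over the complex link and the complex link itself. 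Since $H_\bullet(B^d, S^{d-1})$ is free and concentrated in degree $d$, the relative Künneth formula gives, for the contribution of $p_j$,
\[
  H_\bullet\bigl( (B^{d(\alpha_j)}, S^{d(\alpha_j)-1}) \times (C\mathcal L(X,\mathscr S_{\alpha_j}), \mathcal L(X,\mathscr S_{\alpha_j})) \bigr) \;\cong\; H_{\bullet - d(\alpha_j)}\bigl( C(\mathcal L(X,\mathscr S_{\alpha_j})), \mathcal L(X,\mathscr S_{\alpha_j}) \bigr).
\]

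To conclude, since $E$ is contractible and $Y$ is nonempty, the long exact sequence of the pair $(E,Y)$ gives $\tilde H_\bullet(M_{f|(X,0)}) \cong \tilde H_\bullet(Y) \cong H_{\bullet+1}(E,Y)$; combining with the previous two assertions,
\[
  \tilde H_\bullet(M_{f|(X,0)}) \;\cong\; \bigoplus_{j=1}^{k} H_{\bullet - d(\alpha_j) + 1}\bigl( C(\mathcal L(X,\mathscr S_{\alpha_j})), \mathcal L(X,\mathscr S_{\alpha_j}) \bigr),
\]
and grouping the points $p_j$ according to the stratum on which they lie --- of which exactly $\mu_f(\alpha;X,0)$ lie on $\mathscr S_\alpha$ by Definition~\ref{def:MuFAlpha} --- turns the right-hand side into the asserted direct sum. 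The hard part will be making the sweep fully rigorous: stratified Morse theory is phrased in terms of sublevel sets of real-valued Morse functions, so one must redo Milnor's passage from the real to the complex picture in the stratified context --- parametrizing the fibres of the complex-valued $f_\eta$ over the arc system by an auxiliary real Morse function --- while simultaneously controlling that no critical points escape to $\partial B_\varepsilon$ and keeping track of the degree shifts. The identification of the normal Morse data with $(C\mathcal L, \mathcal L)$ is likewise a known input from complex stratified Morse theory, but it is exactly the step responsible for the complex link appearing in the formula. As the text notes, all of this is by now a routine adaptation, so only a sketch is needed.
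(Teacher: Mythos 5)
Your proposal is correct and follows essentially the same route as the paper's proof: identify the Milnor fiber with a generic fiber of the morsification inside the contractible Milnor tube, sweep along an arc system to the distinct critical values, attach at each stratified Morse point the product of tangential and normal Morse data (the latter being the pair of the cone over the complex link and the complex link), and conclude by Künneth and the long exact sequence of the pair, grouping critical points by strata. The only differences are cosmetic (collar-attachment phrasing versus retraction onto the preimage of the arc system, and $(B^{d},S^{d-1})$ versus $([0,1],\partial[0,1])^{d}$ for the tangential datum).
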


Proposition \ref{prp:HomologyDecompositionStratifiedMorsification} shows that the 
characterizations 1') and 2') 
of the numbers $\mu_f(\alpha;X,0)$ 
in Section \ref{sec:Summary} 
coincide.
We include a brief proof.

\begin{proof}
  Choose $\varepsilon > 0$ sufficiently small so that the squared distance 
  function to the origin 
  $r^2 : \CC^n \to \RR_{\geq 0}$ 
  does not have any critical points in the ball $B_\varepsilon$ neither 
  on $X$ nor on $X \cap f^{-1}(\{0\})$. After shrinking $\varepsilon>0$ 
  once more, if necessary, we may assume that the space 
  $B_\varepsilon \cap X \cap f^{-1}(\{0\})$ is a deformation retract of 
  $B_\varepsilon \cap X \cap f^{-1}(D_\delta) $
  for sufficiently small 
  $\varepsilon \gg \delta > 0$. In particular, the space $B_\varepsilon \cap X \cap 
  f^{-1}(D_\delta)$ is contractible. 

  Its boundary $\partial( B_\varepsilon \cap X \cap f^{-1}(D_\delta))$ is 
  topologically stable under small perturbations of $f$. So is the Milnor fiber 
  \[
    M_{f|(X,0)} = B_\varepsilon \cap X \cap f^{-1}(\{\delta\}) \subset 
    \partial( B_\varepsilon \cap X \cap f^{-1}(D_\delta)).
  \]
  In any unfolding $F = (f_t,t)$ of $f$ we may therefore identify the pairs 
  \[
    \left(B_\varepsilon \cap X \cap f^{-1}(D_\delta), M_{f|(X,0)}\right)
    \cong
    \left( B_\varepsilon \cap X \cap f_{\eta}^{-1}(D_\delta), 
    B_\varepsilon \cap X \cap f_\eta^{-1}(\{\delta\})\right)
  \]
  for sufficiently small $\varepsilon \gg \delta \gg \eta >0$. 
  
  After modifying $f_\eta$ a little more we may assume that all critical
  values $\{c_i\}_{i=1}^N$ of $f_\eta$ are distinct points in the disc $D_\delta$. 
  Choose non-intersecting differentiable paths $\gamma_i\colon [0,1] \to D_\delta$ 
  from $\delta$ to 
  $c_i$ and let $\gamma_i([0,1])$ be its image in $D_\delta$. 
  By virtue of Thom's First Isotopy Lemma, the map 
  \[
    f_\eta \colon B_\varepsilon \cap X \cap f_\eta^{-1}(D_\delta) \to 
    D_\delta
  \]
  is a $C^0$-fiber bundle away from the points $c_i$ and the space
  $B_\varepsilon \cap X \cap f_\eta^{-1}(D_\delta)$ retracts onto 
  $f_\eta^{-1}\left( \bigcup_{i=1}^N \gamma_i([0,1]) \right)$. 

  Along each path $\gamma_i$, one attaches a so called \textit{thimble} 
  to $B_\varepsilon \cap X \cap f_\eta^{-1}(\{\delta\}) \cong M_{f|(X,0)}$. 
  This thimble is given by the product of the tangential 
  and the normal morse datum of $f_\eta$ at the critical point 
  $p_i$ over $c_i$. See \cite{GoreskyMacPherson88} for a definition of 
  these. Altogether, we obtain 
  \begin{eqnarray*}
    & & \tilde H_\bullet( M_{f|(X,0)} ) \\
    &=& H_{\bullet+1}\left( B_\varepsilon \cap X \cap f^{-1}(D_\delta), M_{f|(X,0)} \right) \\
    &=& H_{\bullet+1}\left( B_\varepsilon \cap X \cap f_\eta^{-1}(D_\delta), 
    B_\varepsilon \cap X \cap f_\eta^{-1}(\{\delta\}) \right) \\
    &=& H_{\bullet+1}\left( B_\varepsilon \cap X \cap f_\eta^{-1}
    \left( \bigcup_{i=1}^N \gamma_i([0,1]) \right), 
    B_\varepsilon \cap X \cap f_\eta^{-1}(\{\delta\}) \right) \\
    &=& H_{\bullet+1}\left( B_\varepsilon \cap X \cap f^{-1}_\eta
    \left( \bigcup_{i=1}^N \gamma_i([0,1]) \right), 
    B_\varepsilon \cap X \cap f_\eta^{-1}
    \left( \bigcup_{i=1}^N \gamma_i([0,1)) \right) \right) \\
    &=& \bigoplus_{\alpha \in A} \bigoplus_{i=1}^{\mu_f(\alpha;X,0)} 
    H_{\bullet+1}\left( ([0,1], \partial [0,1])^{d(\alpha)} \times 
    ( C(\mathcal L (X,\mathscr S_\alpha)), \mathcal L(X,\mathscr S_\alpha))\right)\\
    &=& \bigoplus_{\alpha \in A} \bigoplus_{i=1}^{\mu_f(\alpha;X,0)} 
    H_{\bullet-d(\alpha)+1}\left( C(\mathcal L (X,\mathscr S_\alpha)), 
    \mathcal L(X,\mathscr S_\alpha)\right).
  \end{eqnarray*}
\end{proof}

\begin{remark}
  The existence of the homology decomposition (\ref{eqn:HomologyDecompositionMilnorFiber})
  also follows from the more general bouquet decomposition of the Milnor 
  fiber due to M. Tib{\u a}r \cite{Tibar95}. His proof, however, does not 
  use morsifications and it requires further work to show that the numbers which 
  play the corresponding role of the $\mu_f(\alpha;X,0)$ in his homology decomposition 
  coincide with the number of Morse critical points in a morsification.
\end{remark}
\begin{figure}[h]
  \includegraphics[scale=0.14]{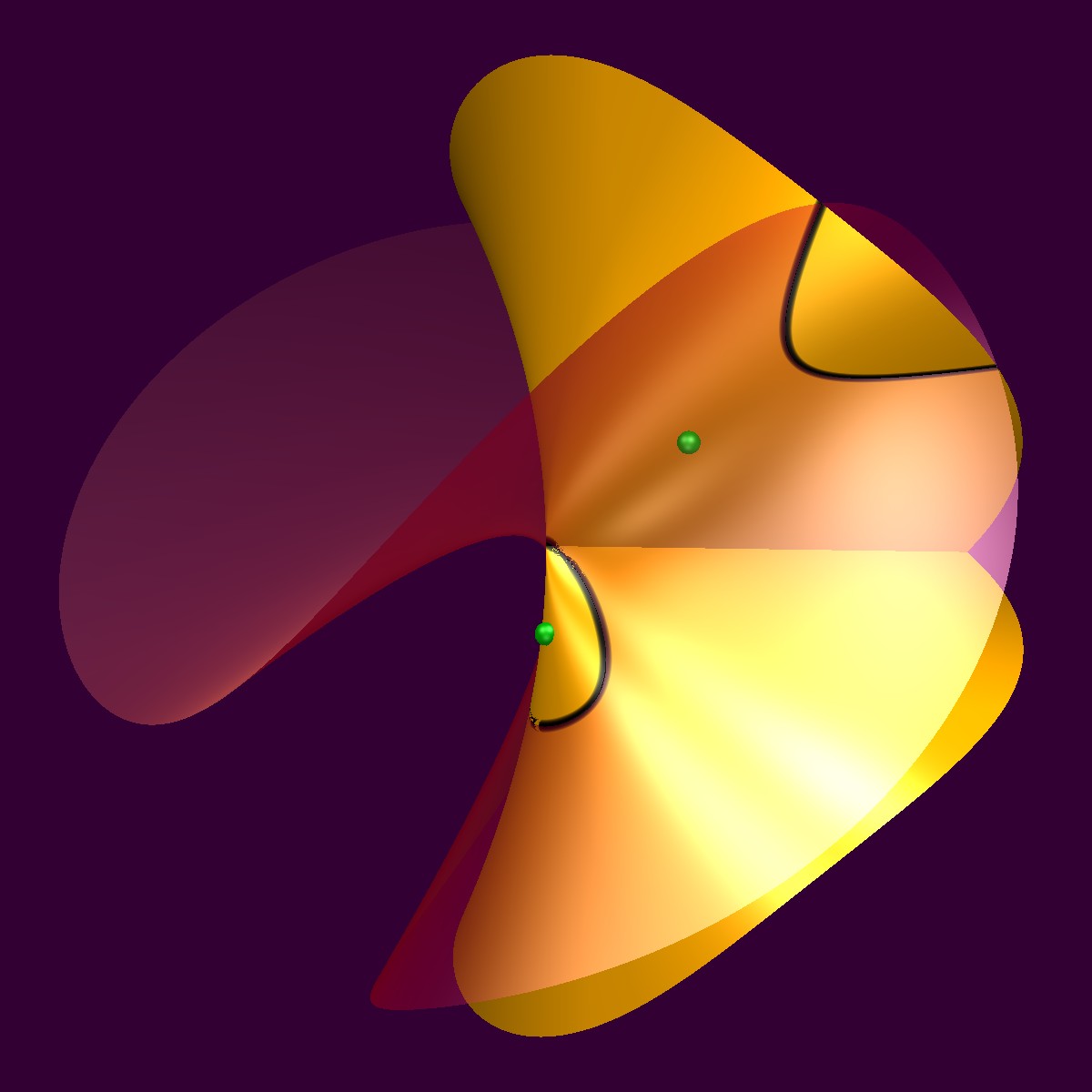}
  \includegraphics[scale=0.14]{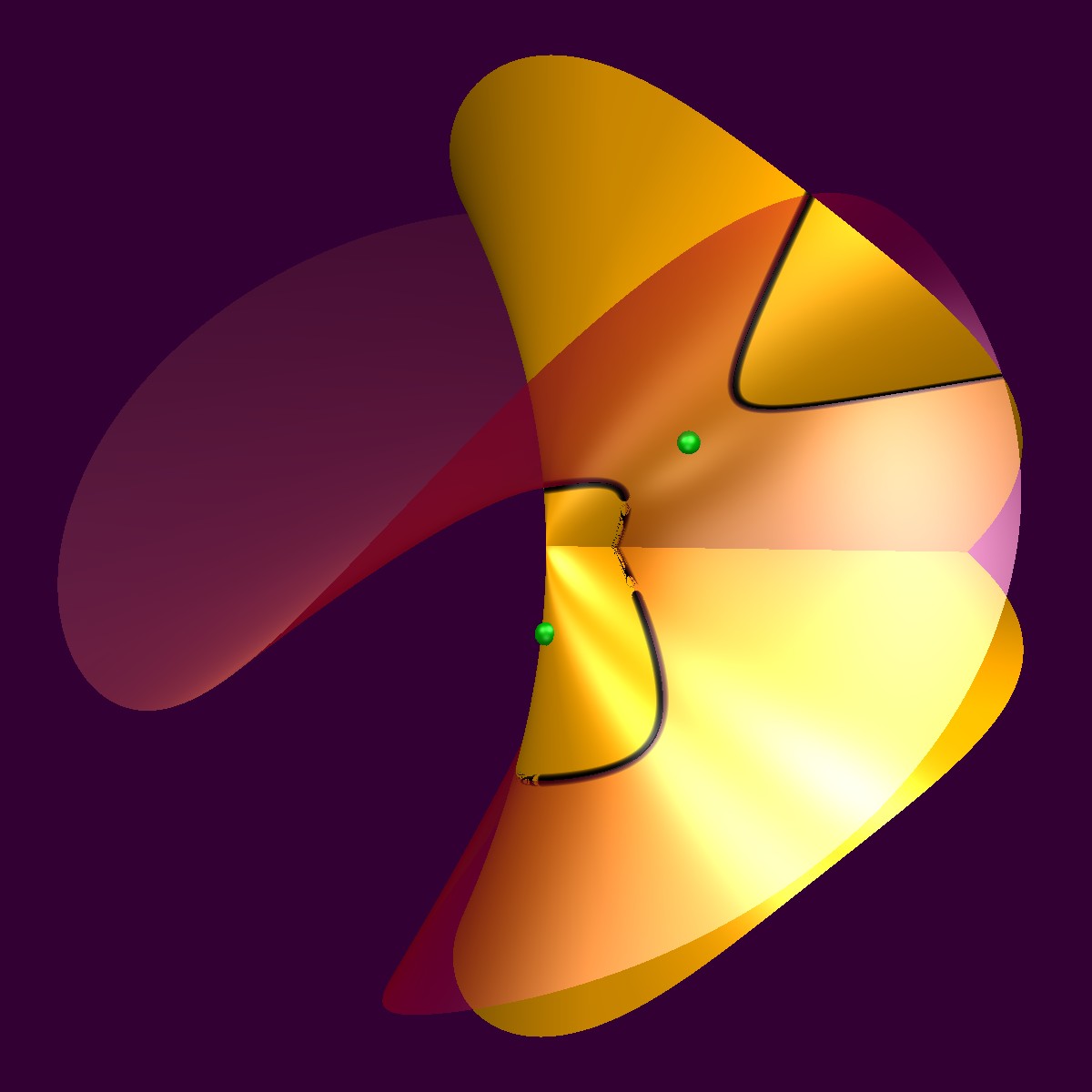}\\
  \vspace{1mm}
  \includegraphics[scale=0.14]{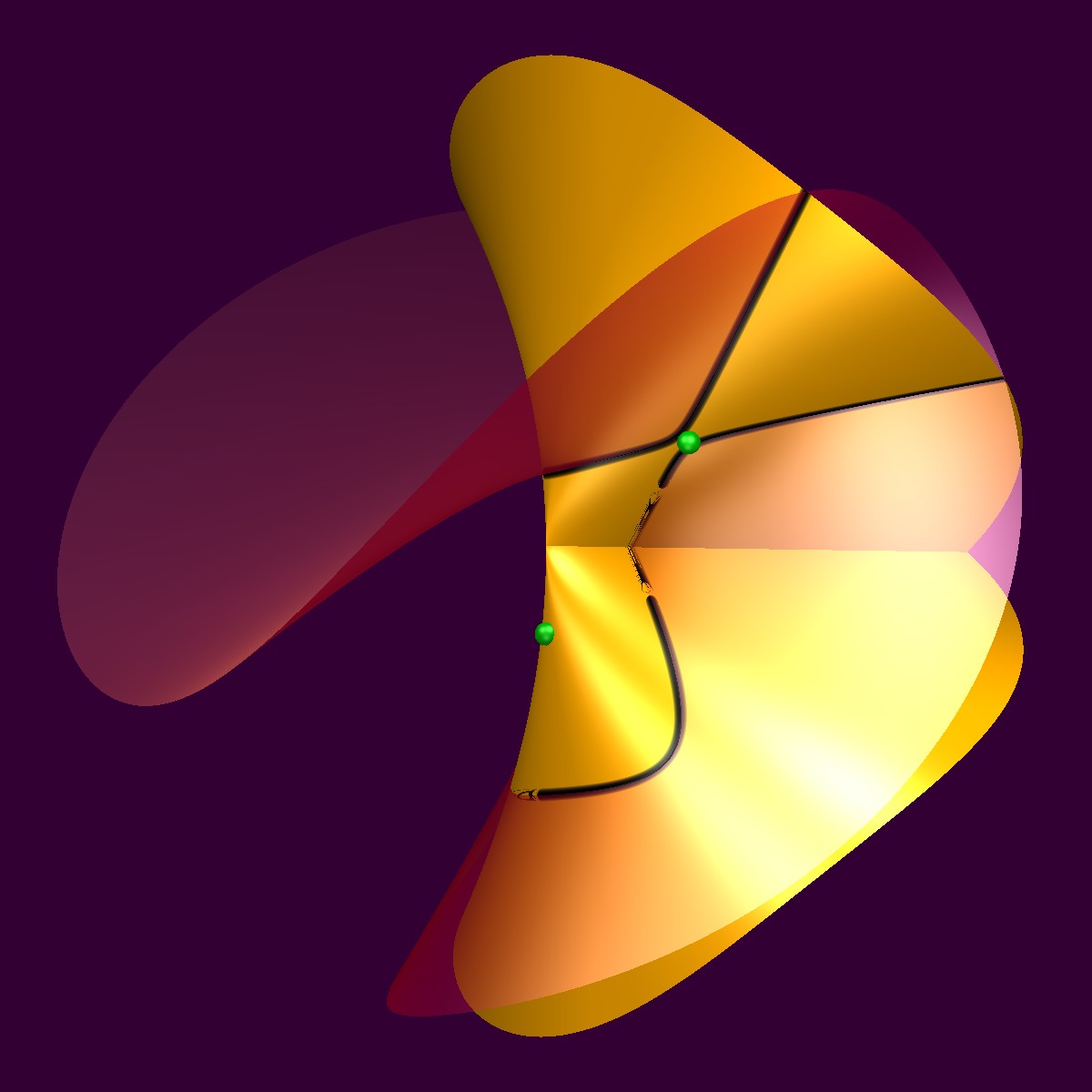}
  \includegraphics[scale=0.14]{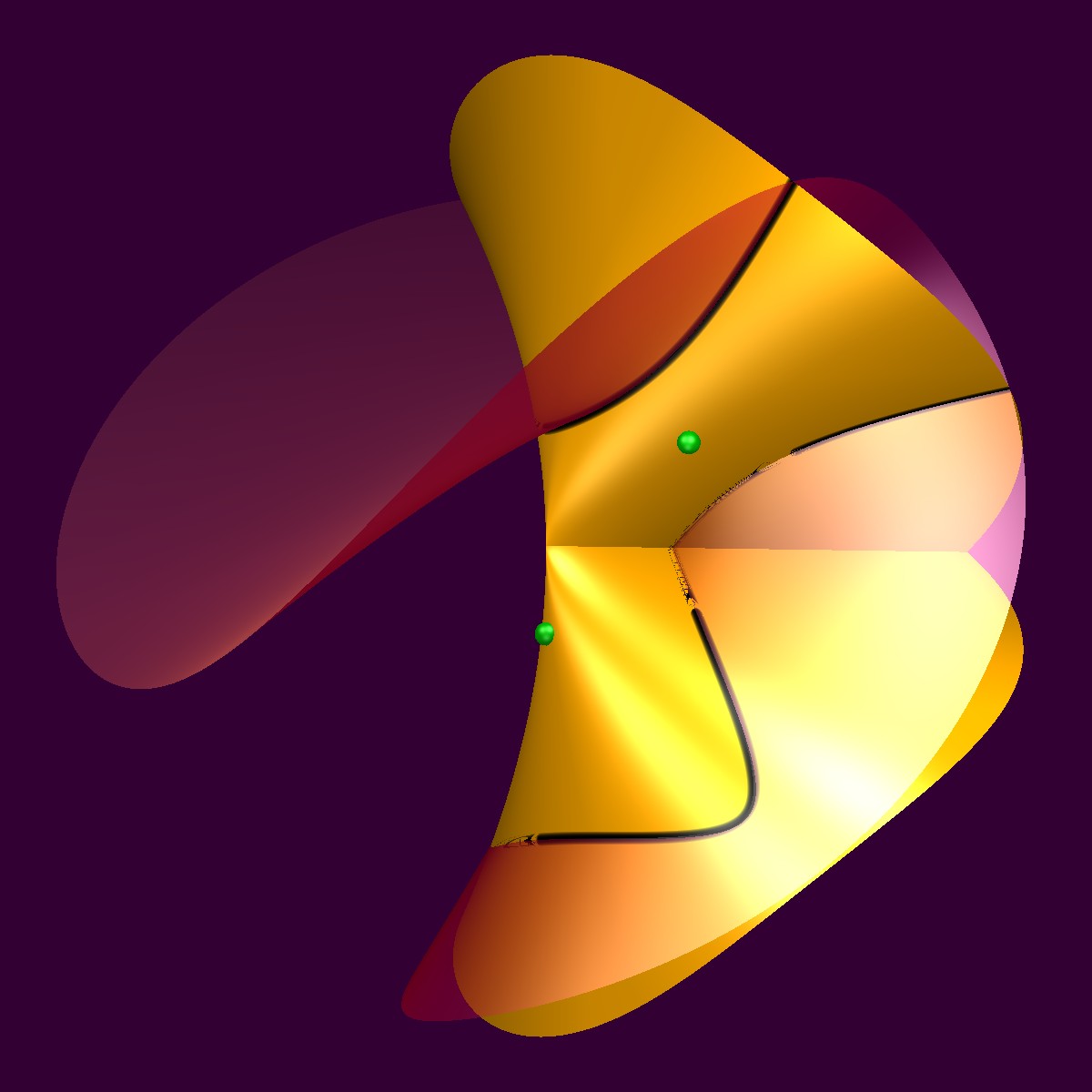}\\
  \vspace{1mm}
  \caption{Morsification of $f|X$ in a Milnor ball: 1) and 2) passing the 
    critical point of $f_\eta|X$ on $\mathscr S_0$, 3) and 4) passing another two critical points 
    on $\mathscr S_2$, one being on the backside.
  }
  \label{fig:MorsificationProcessII}
\end{figure}

\begin{example}
  \label{exp:HomologyDecomposition}
  We continue with Example \ref{exp:Morsification}. For $t=1$ the critical 
  point of the morsified function $f_{1}$ on $\mathscr S_1$ is $(-1/2,0,0)^T$.
  On $\mathscr S_2 \subset X$ they are 
  \[
    p_1 = 
    \begin{pmatrix}
      0 \\ 0 \\ -1
    \end{pmatrix}, \quad 
    p_{2,3} = 
    \begin{pmatrix}
      1\\
      \pm 1 \\
      1
    \end{pmatrix}, \quad 
    p_{4,5} =
    \begin{pmatrix}
      -1 \\
      \pm i\\
      -1
    \end{pmatrix}, \quad 
    p_{6,7} =
    \begin{pmatrix}
      1 \\ \pm 3 \\ -3
    \end{pmatrix}.
  \]
  The complex links $\mathcal L ( X, \mathscr S_\alpha)$ 
  of $X$ 
  along the different strata are the following. 

  For $\mathscr S_0 = \{0\}$, it is the complex link of the Whitney 
  umbrella $(X,0)$ itself, which is known to be the nodal cubic. Hence 
  $\mathcal(X,\mathscr S_0) \cong_h S^1$ is homotopy equivalent to a circle. 
  
  Along $\mathscr S_1$ the normal slice of $X$ consists of two 
  complex lines meeting transversally. The complex link is therefore 
  a pair of points $\mathcal L(X,\mathscr S_1) \cong \{ q_1, q_2 \}$.

  For the third stratum $\mathscr S_2$, the normal slice is a single 
  point and the complex link is empty. We adapt the convention 
  that the real cone over the empty set $C(\emptyset) = \{pt\}$ is the 
  vertex $pt$ of the cone.

  The homology decomposition for the Milnor fiber thus reads 
  \begin{eqnarray*}
    \tilde H_{\bullet}(M_{f|(X,0)}) &\cong&
    H_{\bullet+1}( C(S^1),S^1 ) \oplus 
    H_{\bullet}( C(\{q_1,q_2\}), \{q_1,q_2\}) \oplus 
    \bigoplus_{i=1}^5 H_{\bullet - 1}( \{pt\} ) \\
    &\cong & \ZZ[1] \oplus \ZZ[1] \oplus (\ZZ[1])^5
  \end{eqnarray*}
  where we write $\ZZ[e]$ for a cohomological shift of $\ZZ$ by $e$.
  In combination with the bouquet decomposition theorem from 
  \cite{Tibar95}, we may even infer that $M_{f|(X,0)}$ is homotopy 
  equivalent to a bouquet of seven circles.
\end{example}

\subsection{The Euler obstruction of a $1$-form}
\label{sec:EulerObstruction}

In \cite[Proposition 2.3]{SeadeTibarVerjovsky05}, J. Seade, M. Tib{\u a}r, and 
A. Verjovsky proved that 
\[
  \mu_f(\gamma;X,0 ) = (-1)^{\dim \mathscr S_\gamma} \Eu_f(X,0)
\]
for the top dimensional 
stratum\footnote{Or, in case $(X,0)$ is reducible, the union of the top dimensional strata}
$\mathscr S_\gamma$.
The Euler obstruction of a function 
is defined using the gradient vector field $\grad f$.
For the purposes of this note, it is more natural to consider  
the $1$-form $\D f$ and its canonical lift to the dual $\tilde \Omega^1$ 
of the Nash bundle as we will describe below. 
This provides the notion of the Euler obstruction $\Eu^{\D f}(X,0)$ 
of the $1$-form $\D f$ 
on $(X,0)$, as was 
first defined by W. Ebeling and S.M. Gusein-Zade in \cite{EbelingGuseinZade05}.
In this section, we will follow their example and also consider the 
slightly more general case of an arbitrary $1$-form $\omega$ on $(X,0)$. 

\medskip

Throughout this section, we let 
$U \subset \CC^n$ be an open domain and $X \subset U$ a reduced, complex 
analytic space.
Suppose that $X$ is equidimensional of dimension $d$. 
On the set of nonsingular points 
$X_{\reg}$ we can consider the map 
\begin{equation}
  \Phi \colon X_{\reg} \to \Grass(d,n), \quad p \mapsto \left[ T_p X \subset T_p \CC^n \right]
  \label{eqn:RationalMapForNashBlowup}
\end{equation}
taking any point $p$ to the class of its tangent space $T_p X$ as a subspace of  
$T_p \CC^n$ by means of the embedding of $X$.

\begin{definition}
  The Nash modification of $X$ is the complex analytic closure of the graph
  \[
    \tilde X = \overline{\{ (p,\Phi(p)) : p \in X_{\reg}\} } \subset U \times \Grass(d,n)
  \]
  together with its projections 
  \[
    \xymatrix{ 
      & \tilde X \ar[dl]_{\nu} \ar[dr]^{\rho} & \\
      X & & \Grass(d,n).
    }
  \]
  The restriction of the tautological bundle on $U \times \Grass(d,n)$ 
  to $\tilde X$ will be referred to as the Nash bundle $\tilde T$. 
  The dual bundle will be denoted by $\tilde \Omega^1$. 
\end{definition}

For the dual of the Nash bundle there is a natural notion of 
pullback of $1$-forms on $X$ which is defined as follows. 
We can think of a point $(p,V) \in \tilde X$ as a pair 
of a point $p\in X$ and a limiting tangent space $V$ 
from $X_{\reg}$ at $p$.
The space $V$ can be considered both as a 
subspace of $T_p \CC^n$ and 
as the fiber of the Nash bundle $\tilde T$ 
at the point $(p,V)$. 
Let us denote by $\langle \cdot, \cdot \rangle$ 
the canonical pairing between a vector space and its dual.
For a $1$-form $\omega$ on $\CC^n$, a limiting tangent 
space $V$ at 
$p$ and a vector $v \in V$ we define 
\begin{equation}
  \langle \nu^* \omega(p,V), v \rangle := \langle \omega(p), v \rangle.
  \label{eqn:PullbackOfOneForms}
\end{equation}
Here we consider $v$ as a point in the fiber of the Nash bundle 
over the point $(p,V) \in \tilde X$ on the left hand side and as a vector in $V \subset T_p \CC^n$ 
on the right hand side.

\medskip
In order to define the Euler obstruction of a $1$-form, we need 
to adapt Definitions \ref{def:StratifiedRegularPoint} and \ref{def:StratifiedMorseCriticalPoint}
in this setup. Since for $1$-forms there is no associated Milnor fibration, 
we may drop the assumption that the stratification of $X$ satisfies Whitney's 
condition B. 

Let $\omega$ be a holomorphic $1$-form on $U$. 

\begin{definition}
  \label{def:StratifiedNonZeroPointOfAOneForm}
  Suppose $S = \{ \mathscr S_\alpha \}_{\alpha \in A}$ is a complex analytic 
  stratification of $X$ satisfying Whitney's condition A.

  We say that $\omega|(X,p)$ is nonzero at a point $p\in X$ in the stratified 
  sense if $\omega$ does not vanish on the tangent space $T_p \mathscr S_\beta$ 
  of the stratum $\mathscr S_\beta$ containing $p$. 

  We say that a $1$-form $\omega$ on $U$ has an isolated zero on $(X,p)$, 
  if there exists an open neighborhood 
  $U'$ of $p$ such that $\omega$ is nonzero on $X$ in the stratified sense at 
  every point $x \in U' \cap X\setminus \{p\}$.
\end{definition}

\noindent If in the following we do not specify a stratification, 
we again choose $S$ to be the canonical Whitney stratification 
for a reduced, equidimensional complex analytic space $X$.

It is an immediate consequence 
of the Whitney's condition A that at every point $p\in X$ such 
that the restriction $\omega|\mathscr S_\alpha$ 
of $\omega$ to the stratum $\mathscr S_\alpha$ containing $p$ 
is non-zero, also the pullback $\nu^* \omega$ is non-zero 
at \textit{any} point $(p,V) \in \nu^{-1}(\{p\})$ in the fiber 
of $\nu \colon \tilde X \to X$ over $p$. In particular, 
$\nu^* \omega$ is a nowhere vanishing section on the preimage 
of a punctured neighborhood $U'$ of $p$ whenever $\omega$ 
has an isolated zero on $(X,p)$ in the stratified sense. 

\begin{definition}[cf.\,\cite{EbelingGuseinZade05}]
  \label{def:EulerObstructionOfAOneForm}
  Let $(X,p) \subset (\CC^n,p)$ be an equidimensional, reduced, 
  complex analytic space of dimension $d$ 
  and $\omega$ the germ of a $1$-form 
  on $(\CC^n,p)$ such that $\omega|(X,p)$ has an isolated zero 
  in the stratified sense. The Euler obstruction $\Eu^\omega(X,p)$
  of $\omega$ 
  on $(X,p)$ is defined as the obstruction to extending 
  $\nu^* \omega$ as a nowhere vanishing section of the dual 
  of the Nash bundle from the preimage $\nu^{-1}( \partial B_\varepsilon \cap X )$ 
  of the real link $\partial B_\varepsilon \cap X$ of $(X,p)$ 
  to the interior of $\nu^{-1}( B_\varepsilon \cap X)$ of 
  the Nash transform. More precisely, it is the value 
  of the obstruction class 
  \[
    \Obs(\nu^* \omega) \in 
    H^{2d}\left( \nu^{-1}(B_\varepsilon \cap X), \nu^{-1}(\partial B_\varepsilon \cap X )\right)
  \]
  of the section $\nu^*\omega$ on the fundamental class of the pair 
  $\left( \nu^{-1}(B_\varepsilon \cap X), \nu^{-1}(\partial B_\varepsilon \cap X )\right)$:
  \[
    \Eu^{\omega}(X,p) = 
    \left\langle \Obs(\nu^* \omega), 
    \left[ \nu^{-1}( B_\varepsilon \cap X), \nu^{-1}(\partial B_\varepsilon \cap X) \right]
    \right\rangle.
  \]
\end{definition}

As we shall see below, the Euler obstruction of a $1$-form $\omega$ 
with isolated singularity on $(X,p)$ counts the zeroes on 
$X_{\reg}$ of a generic deformation $\omega_\eta$ of $\omega$. 
In the case $\omega = \D f$ for some function $f$ with isolated 
singularity on $(X,p)$, these zeroes correspond to Morse critical 
points of $f_\eta$ on $X_{\reg}$ in an unfolding. 
We have seen before that these are not the only critical points 
of $f_\eta$. 
\begin{definition}
  \label{def:SimpleStratifiedZeroOfAOneForm}
  Suppose $S = \{ \mathscr S_\alpha\}_{\alpha\in A}$ is a complex analytic 
  stratification of $X$ satisfying Whitney's condition A.
  A point $p \in X$ is a \textit{simple zero} of $\omega|X$, if the following holds. 
  Let $\mathscr S_\beta$ be the stratum containing $p$ and 
  $\sigma( \omega|\mathscr S_\beta)$ the section of the restriction 
  $\omega|\mathscr S_\beta$ as a submanifold of the total space of the vector bundle 
  $\Omega^1_{\mathscr S_\beta}$.
  Denote the zero section by $\sigma(0)$.

  \begin{enumerate}[i)]
    \item The intersection of $\sigma(\omega|\mathscr S_\beta)$ and the zero section
      \[
	\sigma( \omega|\mathscr S_\beta) \pitchfork_p \sigma(0)
      \]
      in the vector bundle $\Omega^1_{\mathscr S_\beta}$ on $\mathscr S_\beta$ 
      is transverse at $p$.
    \item $\omega$ does not annihilate any limiting tangent space $V$ 
      from a higher dimensional stratum at $p$.
  \end{enumerate}
\end{definition}

Whenever $\omega = \D f$ for some holomorphic function $f$, this 
reduces precisely to the definition of a stratified Morse critical point 
$p$ of $f|X$, Definition \ref{def:StratifiedMorseCriticalPoint}.

\medskip
Analogous to morsifications we define an unfolding of a $1$-form 
$\omega$. Since $\Omega^1_{U}$ is trivial, we can 
consider $\omega$ as 
a holomorphic map $U \to \CC^n$. An unfolding of $\omega$ 
is then given by a holomorphic map germ 
\[
  W \colon \left( \CC^n \times \CC, (p,0) \right) \to 
  \left( \CC^n \times \CC, (\omega(p),0) \right),
  (x,t) \mapsto (\omega_t(x), t).
\]

\begin{proposition}
  \label{prp:MorsificationLemmaForOneForms}
  Any $1$-form $\omega$ with an isolated zero on $(X,p)$ admits an unfolding 
  $W = (\omega_t,t)$ as above on some open sets $U' \times T$
  such that for a sufficiently small ball $B_\varepsilon \subset U'$ around $p$ 
  and an open subset $0 \in T' \subset T$ one has 
  \begin{enumerate}[i)]
    \item $X\cap B_\varepsilon$ retracts onto the point $p$,
    \item $\omega = \omega_0$ on $U'$ and $\omega$ has an isolated zero on 
      $X \cap U'$,
    \item for every $t \in T'$, $t \neq 0$, the $1$-form 
      $\omega_t$ has only simple isolated zeroes on $X \cap B_\varepsilon$ 
      and is nonzero on $X \cap U'$ at all boundary points $x \in X \cap \partial B_\varepsilon$.
  \end{enumerate}
  Moreover, $\omega_t$ can be chosen to be of the form 
  $\omega_t = \omega - t \cdot \D l$
  for a linear form $l \in \Hom(\CC^n,\CC)$.
\end{proposition}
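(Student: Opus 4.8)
The plan is to exhibit a single parameter family of the prescribed form $\omega_t = \omega - t\,\D l$ and verify the three conditions by a standard genericity/Sard argument, the point being that ``isolated zero on $(X,p)$ in the stratified sense'' is an open condition that forces transversality on the boundary sphere for small radii. First I would fix a representative of $(X,p)$ on a Stein neighborhood $U'$ of $p$ and, using the Curve Selection Lemma exactly as in the discussion preceding \eqref{eqn:MilnorFibration}, choose $\varepsilon_0 > 0$ so that for every $0 < \varepsilon \le \varepsilon_0$ the sphere $\partial B_\varepsilon$ meets every stratum $\mathscr S_\alpha$ transversally and $X \cap B_\varepsilon$ is contractible onto $p$; this gives condition i) and, since $\omega$ has an isolated zero at $p$, also condition ii) after shrinking $U'$ so that $p$ is the only zero of $\omega|X$ in $U'$ in the stratified sense.

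Next I would set up the genericity argument for iii). For a fixed stratum $\mathscr S_\alpha$, consider the incidence variety
\[
  Z_\alpha = \{ (x,l) \in (\mathscr S_\alpha \cap B_{\varepsilon_0}) \times \Hom(\CC^n,\CC) :
  (\omega - \D l)|T_x\mathscr S_\alpha = 0 \}
\]
and the projection $Z_\alpha \to \Hom(\CC^n,\CC)$. Because the affine map $l \mapsto \D l$ surjects onto the fiber of $\Omega^1_{\mathscr S_\alpha}$ over any point, the universal section $(x,l)\mapsto (\omega-\D l)|_{\mathscr S_\alpha}$ is transverse to the zero section, so $Z_\alpha$ is smooth of dimension $n$; the critical values of $Z_\alpha \to \Hom(\CC^n,\CC)$ form a proper analytic subset, and for $l$ outside it the zeros of $(\omega - \D l)|\mathscr S_\alpha$ are transverse, i.e. condition i) of Definition \ref{def:SimpleStratifiedZeroOfAOneForm} holds. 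Condition ii) of that definition — that $\omega - \D l$ kill no limiting tangent space from a higher stratum at such a zero — is again the complement of a proper analytic set in the $l$-parameter: the bad locus is the image of the appropriate incidence variety in $\nu^{-1}(X)$ cut out by the tautological limiting-tangent-space bundle, which has positive codimension because $\omega$ itself already has this non-annihilation property generically (it holds at $p$ by hypothesis on $(X,0)$, hence on a dense open set of directions $l$). Intersecting the finitely many good open dense sets over all strata $\mathscr S_\alpha$ yields a dense open set of $l$ for which every zero of $\omega_t|X$ in $B_{\varepsilon_0}$ is simple, for all $t\neq 0$ in a small disc; rescaling absorbs $t$, so it suffices to treat $t$ as the single deformation.

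Finally I would pin down the ball. With $\omega$ having an isolated stratified zero at $p$, the pullback $\nu^*\omega$ is nowhere zero on $\nu^{-1}((X\setminus\{p\})\cap U')$ by the remark after Definition \ref{def:StratifiedNonZeroPointOfAOneForm}; since $\nu^*\omega$ is continuous and nonvanishing on the compact set $\nu^{-1}(\partial B_\varepsilon \cap X)$ for each $0<\varepsilon\le\varepsilon_0$, there is $\tau>0$ such that $\nu^*(\omega - t\,\D l)$ is still nonvanishing there for $|t|<\tau$; pushing down, $\omega_t$ is nonzero on $X\cap\partial B_\varepsilon$ in the stratified sense, and by a Curve Selection / compactness argument the zeros of $\omega_t$ on $X\cap B_\varepsilon$ cannot escape to the boundary as $t\to 0$, so they stay in the interior. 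Set $T' = \{|t|<\tau\}\cap T$. The main obstacle, and the step deserving the most care, is the transversality-on-the-link statement: one must check that for the \emph{generic} chosen $l$ the family $\omega_t$ does not develop, for small $t\neq 0$, a tangency of the zero set of $\omega_t$ with the sphere $\partial B_\varepsilon$ at a point of a positive-dimensional stratum — this is where a stratified Bertini–Sard argument (or Thom's Isotopy Lemma applied to the total map $(x,t)\mapsto(\lVert x-p\rVert^2,t)$ on the critical locus of $\omega_t|X_{\reg}$, as in Example \ref{exp:Morsification}) is needed to guarantee that the curve of zeros is transverse to all small spheres simultaneously.
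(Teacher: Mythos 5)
Your overall strategy (incidence variety over each stratum, Bertini--Sard genericity of $l$, compactness of $\nu^{-1}(\partial B_\varepsilon\cap X)$ for the boundary statement) is in the same spirit as the paper's proof, but there is a genuine gap at the crucial reduction from ``generic perturbation'' to the one-parameter family $\omega_t=\omega-t\cdot\D l$. Your Sard argument on $Z_\alpha\to\Hom(\CC^n,\CC)$ only yields that the set of \emph{bad} covectors $\varphi$ (those for which $\omega-\D\varphi$ has a non-transverse zero) is small in $\Hom(\CC^n,\CC)$; what the proposition needs is a fixed direction $l$ such that $t\cdot l$ is good for \emph{all} sufficiently small $t\neq 0$. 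The remark ``rescaling absorbs $t$'' does not close this gap: the bad set is not conical, because $\omega$ is held fixed while only the perturbation is scaled, so $l\notin\Delta$ gives no information about $t\,l$ for small $t$; worse, the zeros of $\omega_t$ accumulate at $p$ as $t\to 0$, which is exactly where no uniform transversality statement is available from your setup. The paper's proof is built precisely to handle this: it projectivizes the parameter, i.e. considers the map $\hat\Phi$ from (a resolution of indeterminacy of) the incidence space $N=\{(x,V,\varphi):\varphi|V=\nu^*\omega(x,V)\}$ over the \emph{Nash transform} to $\PP(\Hom(\CC^n,\CC))$, picks $L$ a regular value, and observes that the fiber over $L$ is a curve $C$ with $\rho|C\colon C\to L$ a finite branched covering; the Curve Selection Lemma then shows $\rho$ is submersive at all points of $C\cap\mathscr S'_\alpha$ near the exceptional fiber, which is what converts genericity of the \emph{direction} $[l]$ into simplicity of the zeros of $\omega-t\cdot\D l$ for every small $t\neq0$. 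Some device of this kind (projectivization plus a curve/branched-cover argument, or an explicit proof that the bad set meets the line $\CC\cdot l$ only in $0$ near $0$) is indispensable and is missing from your write-up.

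A second, smaller problem is your treatment of condition ii) of Definition~\ref{def:SimpleStratifiedZeroOfAOneForm}. The justification ``$\omega$ itself already has this non-annihilation property \ldots it holds at $p$ by hypothesis'' is not correct: having an isolated stratified zero at $p$ says nothing about limiting tangent spaces \emph{at $p$ itself} -- typically $\omega$ does annihilate some of them there (that is compatible with, indeed expected at, the singular point). The correct mechanism, as in the paper, is a dimension count: the incidence set of triples $(x,V,\varphi)$ with $x\in Y_\alpha=X_\alpha\setminus\mathscr S_\alpha$, $V$ a limiting tangent plane from $\mathscr S_\alpha$ at $x$, and $(\omega-\varphi)|V=0$ has dimension $<n$, so for a generic direction $L$ its fiber is discrete; this gives only finitely many points near $p$ where $\omega-t\cdot\D l$ kills a limiting tangent space, and one then shrinks $U'$ and $B_\varepsilon$ to avoid them. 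Also note that your closing paragraph about tangencies of the zero curve with $\partial B_\varepsilon$ addresses an issue the proposition does not require (only nonvanishing of $\omega_t$ on $X\cap\partial B_\varepsilon$ is claimed, which your compactness argument already gives), while the genuinely delicate step is the one identified above.
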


\begin{definition}
  \label{def:MultiplicityOfOneForms}
  We define the \textit{multiplicity} $\mu^\omega( \alpha; X,p)$ 
  of $\omega|(X,p)$ to be 
  the number of simple zeroes of $\omega_t$ on $\mathscr S_\alpha$ 
  for $t \neq 0$
  in an unfolding as in Proposition \ref{prp:MorsificationLemmaForOneForms}.
\end{definition}

Again, we clearly have $\mu_f(\alpha;X,p) = \mu^{\D f}(\alpha;X,p)$ in the 
case where $\omega = \D f$ is the differential of a function $f$ with 
isolated singularity on $(X,p)$. As a straightforward consequence
we obtain:

\begin{corollary}
  \label{cor:ExistenceOfMorsifications}
  For a holomorphic function $f \colon U \to \CC$ with an isolated singularity 
  in the stratified sense at $(X,p)$ a morsification $F = (f_t,t)$ of $f|(X,p)$ 
  can be chosen to be of the form 
  \[
    f_t = f - t \cdot l
  \]
  for a linear form $l \in \Hom(\CC^n,\CC)$.
\end{corollary}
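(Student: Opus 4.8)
Corollary \ref{cor:ExistenceOfMorsifications} says: for $f$ with isolated singularity on $(X,p)$, a morsification $F=(f_t,t)$ of $f|(X,p)$ can be taken of the form $f_t = f - t\cdot l$ for a linear form $l$. The obvious plan is to deduce this directly from Proposition \ref{prp:MorsificationLemmaForOneForms} applied to the $1$-form $\omega = \D f$.

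Let me think about what needs checking. Proposition \ref{prp:MorsificationLemmaForOneForms} gives an unfolding $W=(\omega_t,t)$ of $\omega=\D f$ with $\omega_t = \omega - t\cdot \D l = \D f - t\cdot \D l = \D(f - t l)$, such that (for suitable $B_\varepsilon$ and $T'$) each $\omega_t$, $t\neq 0$, has only simple isolated zeroes on $X\cap B_\varepsilon$ and is nonzero on the boundary. Now $\D(f-tl) = \D f_t$ where $f_t := f - tl$, and a simple zero of the $1$-form $\D f_t$ on a stratum is exactly a stratified Morse critical point of $f_t|X$ — this is the remark right after Definition \ref{def:SimpleStratifiedZeroOfAOneForm}, and the paper already notes $\mu_f(\alpha;X,p) = \mu^{\D f}(\alpha;X,p)$. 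So the simple zeroes of $\omega_t$ on each stratum $\mathscr S_\alpha$ are precisely the stratified Morse critical points of $f_t|X$ on $\mathscr S_\alpha$. Hence $f_t|X$ has only stratified Morse critical points in $X\cap B_\varepsilon$ for all $0\neq t\in T'$, which is exactly the condition in Definition \ref{def:Morsification} for $F=(f_t,t)$ to be a morsification of $f|(X,p)$ (with $V := B_\varepsilon$ and $T := T'$).

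**The one subtlety.** Definition \ref{def:Morsification} asks only that $f_t|X$ have only Morse critical points \emph{in} $X\cap V$ for $0\neq t\in T$; it does not ask for boundary control. So the boundary nonvanishing in part iii) of Proposition \ref{prp:MorsificationLemmaForOneForms} is more than we need here — we simply discard it. Conversely we do need that $f$ itself (i.e. $f_0$) has isolated singularity, which is our hypothesis and matches part ii). There is also the cosmetic point that Proposition \ref{prp:MorsificationLemmaForOneForms} is phrased for $1$-forms on $U\subset\CC^n$ with $\Omega^1_U$ trivialized; since $\D f$ is a genuine exact holomorphic $1$-form on (a neighborhood of $p$ in) $\CC^n$, it falls under that proposition with no adjustment, and the linear-form clause at the end of the proposition produces exactly $\omega_t = \D f - t\,\D l = \D(f-tl)$.

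**Obstacle.** Frankly there is no real obstacle: the corollary is a direct translation of Proposition \ref{prp:MorsificationLemmaForOneForms} through the dictionary ``$1$-form'' $\leftrightarrow$ ``differential of a function'' and ``simple zero'' $\leftrightarrow$ ``stratified Morse critical point.'' The only thing to be careful about is bookkeeping of the open sets: one takes $V$ in Definition \ref{def:Morsification} to be the Milnor ball $B_\varepsilon$ supplied by Proposition \ref{prp:MorsificationLemmaForOneForms}, and $T$ to be the punctured-at-$0$ neighborhood $T'$ there, and checks that property i) of the proposition ($X\cap B_\varepsilon$ retracts to $p$) guarantees $B_\varepsilon$ is a legitimate Milnor ball so that all Morse points of $f_t$ genuinely come from the singularity at $0$, as required in the discussion preceding Definition \ref{def:MuFAlpha}.

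\begin{proof}
  Apply Proposition \ref{prp:MorsificationLemmaForOneForms} to the exact holomorphic $1$-form $\omega = \D f$, which has an isolated zero on $(X,p)$ precisely because $f$ has an isolated singularity on $(X,p)$ in the stratified sense. This yields a linear form $l \in \Hom(\CC^n,\CC)$, a ball $B_\varepsilon$ around $p$ and a punctured neighborhood $0 \in T' \subset \CC$ such that the unfolding $\omega_t = \omega - t\cdot \D l$ has, for every $t \in T'$ with $t \neq 0$, only simple isolated zeroes on $X \cap B_\varepsilon$. Set $f_t := f - t\cdot l$, so that $\omega_t = \D f - t\cdot \D l = \D f_t$, and let $F = (f_t,t)$ be the corresponding unfolding of $f$; it induces an unfolding $F|(X,p)$ of $f|(X,p)$.

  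By the remark following Definition \ref{def:SimpleStratifiedZeroOfAOneForm}, a point $q$ is a simple zero of the $1$-form $\D f_t$ on $X$ if and only if $q$ is a stratified Morse critical point of $f_t|X$ in the sense of Definition \ref{def:StratifiedMorseCriticalPoint}. Hence, taking $V := B_\varepsilon$ and $T := T'$ in Definition \ref{def:Morsification}, the function $f_t|X$ has only stratified Morse critical points in $X \cap V$ for every $t \in T$ with $t \neq 0$. Therefore $F = (f_t,t)$ with $f_t = f - t\cdot l$ induces a morsification of $f|(X,p)$. We have discarded the stronger boundary nonvanishing of Proposition \ref{prp:MorsificationLemmaForOneForms} iii), which is not required by Definition \ref{def:Morsification}; property i) of Proposition \ref{prp:MorsificationLemmaForOneForms} ensures that $B_\varepsilon$ may be taken to be a Milnor ball for $f|(X,p)$, so that for $t \neq 0$ small all Morse critical points of $f_t$ on $X \cap B_\varepsilon$ arise from the original singularity of $f$ at $p$.
\end{proof}
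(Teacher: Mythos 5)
Your proposal is correct and is exactly the argument the paper intends: it states the corollary as a "straightforward consequence" of Proposition \ref{prp:MorsificationLemmaForOneForms} applied to $\omega = \D f$, using the observation after Definition \ref{def:SimpleStratifiedZeroOfAOneForm} that simple zeroes of $\D f_t$ are precisely stratified Morse critical points of $f_t|X$. Your write-up just makes this translation explicit, so there is nothing to add.
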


\begin{proof}
  (of Proposition \ref{prp:MorsificationLemmaForOneForms})
  We will show using Bertini-Sard-type methods that there exists 
  a dense set $\Lambda \subset \Hom(\CC^n,\CC)$ of admissable lines 
  such that the linear form $l$ in Proposition \ref{prp:MorsificationLemmaForOneForms} 
  can be chosen to be an arbitrary linear form with $[l] \in \Lambda$.

  For a fixed $\alpha$ let 
  $X_\alpha= \overline{\mathscr S_\alpha}$ be the closure of the stratum $\mathscr S_\alpha$, 
  $d(\alpha)$ its dimension,
  and $\nu \colon \tilde X_\alpha \to X_\alpha$ its
  Nash transform. 
  Denote the fiber of $\nu$ over the point $p \in X$ by $E$. Since the 
  question is local in $p$, we may restrict our attention to arbitrary small 
  open neighborhoods of $E$ of the form $\nu^{-1}(U')$ for some open set $U' \ni p$.
  Set 
  \[
    N = \left\{ (x,V,\varphi) \in \tilde X_\alpha \times \Hom(\CC^n,\CC) : 
    \varphi| V = \nu^* \omega(x,V) \right\} 
  \]
  and let 
  $\pi \colon N \to \tilde X_\alpha$ and $\rho \colon N \to \Hom(\CC^n,\CC)$ 
  be the two canonical projections. 
  It is easy to see that $N$ has the structure of a principle $\CC^{n-d(\alpha)}$-bundle 
  over $\tilde X_\alpha$. In particular, the open subset 
  ${\mathscr S'}_\alpha = (\nu\circ\pi)^{-1}(\mathscr S_\alpha) \subset N$ 
  is a complex manifold of dimension $n$.

  Let $\Phi \colon N \dashrightarrow \PP(\Hom(\CC^n,\CC))$ be the rational map sending 
  a point $(\varphi,x,V)$ to the class $[\varphi] \in \PP(\Hom(\CC^n,\CC))$. 
  Since $\omega$ had an isolated zero on $(X,p)$, this map is 
  regular on the dense open subset $N\setminus (\pi\circ \nu)^{-1}(\{p\})$ which  
  in particular contains $\mathscr S'_\alpha$.
  In order to work with regular and proper maps, we may resolve the indeterminacy 
  of $\Phi$ and obtain a commutative diagram
  \[
    \xymatrix{
      \hat{\mathscr S}_\alpha \ar@{^{(}->}[r] \ar[d]_\cong& 
      \hat N \ar[drr]^{\hat \Phi} \ar[d] & \\
      {\mathscr S'_\alpha} \ar@{^{(}->}[r] \ar[d] & 
      N \ar[r]^\rho \ar[d]^{\pi} & \Hom(\CC^n,\CC) \ar@{-->}[r] & \PP(\Hom(\CC^n,\CC)). \\
      \tilde{\mathscr S}_\alpha \ar[d]_\cong \ar@{^{(}->}[r] & \tilde X_\alpha \ar[d]^\nu&  & \\
      \mathscr S_\alpha \ar@{^{(}->}[r] & X_\alpha &  & \\
    }
  \]
  Suppose $L \in \PP(\Hom(\CC^n,\CC))$ is a regular value 
  of $\hat\Phi|\hat{\mathscr S}_\alpha$, 
  then $\hat \Phi^{-1}(\{L\}) \cap \hat{\mathscr S}_\alpha$ 
  is a smooth complex analytic curve.
  If we let $C \subset N$ be the image in $N$ of its analytic closure in $\hat N$,
  then evidently $\rho|C \colon C \to L$ is a finite, 
  branched covering at $0 \in L$. 
  It follows a posteriori 
  from the Curve Selection Lemma that $\rho$ is a submersion 
  at every point 
  $(x,V,\varphi) \in C \cap \mathscr S'_\alpha$
  in a neighborhood of $E$.
  An inspection of the differential of $\rho$ at such a point $(x,V,\varphi)$ 
  reveals that 
  the transversality 
  requirement i) in Definition \ref{def:SimpleStratifiedZeroOfAOneForm}
  is satisfied for the $1$-form $\omega - \D \varphi$ at $x$.
  Conversely, this means that for every nonzero linear form $l \in L$ 
  and every sufficiently small $t \neq 0$ 
  the $1$-form $\omega - t \cdot \D l$ has only isolated 
  zeroes at those points $x \in \mathscr S_\alpha$, 
  for which $(t\cdot l, x, V) \in C$. 
  Repeating this process for every stratum, we obtain a dense set 
  $\Lambda_1 \subset \PP(\Hom(\CC^n,\CC))$ of pre-admissable lines.

  In order to verify also the requirement ii) in Definition 
  \ref{def:SimpleStratifiedZeroOfAOneForm}, we proceed as follows.
  Let $Y_\alpha = X_\alpha \setminus \mathscr S_\alpha$ be the union 
  of limiting strata of $\mathscr S_\alpha$ and $\tilde{\mathscr S_\alpha}$, 
  $\mathscr S'_\alpha$, and $\hat{\mathscr S_\alpha}$ their preimages in 
  $\tilde X_\alpha$, $N$, and $\hat N$, respectively.
  The latter three spaces might have rather difficult geometry, but evidently 
  $\dim \hat Y_\alpha < \dim \hat N = n$ and the map 
  $\hat Y_\alpha \to Y_\alpha$ is surjective.
  
  There exists a dense subset $\Lambda_2 \subset \PP(\Hom(\CC^n,\CC))$ 
  such that the restriction $\hat \Phi|\hat Y_\alpha$ has at most 
  discrete fibers over $\Lambda_2$. To see this, we may for example 
  stratify $\hat Y_\alpha$ by finitely many locally 
  closed complex submanifolds $M_i$ and choose $\Lambda_2$ as the 
  set of all regular values of $\hat \Phi| M_i$. Since 
  $\dim M_i \leq \dim \hat Y_\alpha < n$, the fiber $\hat Q = (\hat \Phi|\hat Y_\alpha)^{-1}(L)$ 
  of a point $L \in \Lambda_2$ is discrete and so is its image $Q \subset N$, because 
  $\hat N \to N$ is proper. This means that for a given $l \in L$ there are only 
  finitely many preimages $(x,V,l) \in \rho^{-1}(L)$, i.e. the set of points 
  $x \in X$, for which $\omega - \D l$ annihilates a limiting tangent 
  space $V$ at $x$ is finite in a neighborhood of $p$. 
  We may choose $U'$ and $B_\varepsilon$ sufficiently small to avoid those 
  points.

  We conclude the proof by setting $\Lambda = \Lambda_1 \cap \Lambda_2$.
\end{proof}

We are now prepared to show equivalence of 1') and 3'), 
cf. \cite[Proposition 2.3]{SeadeTibarVerjovsky05}.
\begin{proposition}
  \label{prp:MuEqualsEu}
  For every $1$-form $\omega$ on $U$ with an isolated zero 
  on $(X,p)$ we have 
  \[
    \mu^\omega(\alpha; X,p) = \Eu^{\omega}(X_\alpha,p),
  \]
  where $X_\alpha = \overline{\mathscr S_\alpha}$ is the closure 
  of the stratum $\mathscr S_\alpha$.
\end{proposition}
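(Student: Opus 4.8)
The plan is to compute $\Eu^{\omega}(X_\alpha,p)$ directly from its definition as an obstruction class, using the standard principle that the obstruction to extending a nowhere-zero section of a complex rank-$d(\alpha)$ bundle over $\nu^{-1}(B_\varepsilon\cap X_\alpha)$, relative to $\nu^{-1}(\partial B_\varepsilon\cap X_\alpha)$, is the sum of the local indices of any extension with isolated zeros, provided these all lie at smooth points of $\tilde X_\alpha$. As such an extension I would take the pullback $\nu^*\omega_\eta$ of an unfolding $\omega_\eta = \omega - \eta\cdot\D l$ provided by Proposition~\ref{prp:MorsificationLemmaForOneForms} for the stratum $\mathscr S_\alpha$; recall that the proof of that proposition already works with $\tilde X_\alpha$, $\nu$, and $Y_\alpha = X_\alpha\setminus\mathscr S_\alpha$. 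Since $\omega_\eta\to\omega$ uniformly on the compact set $\nu^{-1}(\partial B_\varepsilon\cap X_\alpha)$, for $\eta\neq 0$ small the section $\nu^*\omega_\eta$ is nowhere zero there and is joined to $\nu^*\omega$ by a straight-line homotopy through nowhere-zero sections, hence computes the same obstruction. Note that $\mathscr S_\alpha$ is open, hence smooth, in $X_\alpha$, so $\nu$ is an isomorphism over $\mathscr S_\alpha$ and $\nu^{-1}(\mathscr S_\alpha)$ lies in the smooth locus of $\tilde X_\alpha$; moreover the restriction of the given Whitney stratification of $X$ to $X_\alpha$ is a Whitney stratification with top stratum $\mathscr S_\alpha$ for which $\omega$ still has an isolated zero at $p$, so $\Eu^{\omega}(X_\alpha,p)$ is defined (and if $p\notin X_\alpha$ both sides of the asserted equality vanish).

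The heart of the matter is to locate the zeros of $\nu^*\omega_\eta$ in the interior. A point $(x,V)\in\tilde X_\alpha$ is a zero exactly when $\omega_\eta(x)$ annihilates the limiting tangent space $V$. Over $\mathscr S_\alpha\cap B_\varepsilon$ the fibre of $\nu$ is the single point $(x,T_xX_\alpha)$ and the zeros are precisely the zeros of the section $\omega_\eta|\mathscr S_\alpha$ of $\Omega^1_{\mathscr S_\alpha}$, of which there are $\mu^\omega(\alpha;X,p)$, all simple, by part iii) of Proposition~\ref{prp:MorsificationLemmaForOneForms} and the definition of $\mu^\omega$; none lie on $\partial B_\varepsilon$. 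Over $Y_\alpha\cap B_\varepsilon$ there should be no zeros at all: if $x\in\mathscr S_\beta\subseteq Y_\alpha$ is a stratified zero of $\omega_\eta|X_\alpha$, then for $\eta\neq0$ it is simple, and because $\mathscr S_\alpha$ is a strictly higher-dimensional stratum adjacent to $\mathscr S_\beta$ whose limiting tangent spaces at $x$ exhaust the whole fibre $\nu^{-1}(x)$ (here one uses that $\mathscr S_\alpha$ is dense in $(X_\alpha)_{\reg}$), part ii) of Definition~\ref{def:SimpleStratifiedZeroOfAOneForm} forces $\nu^*\omega_\eta$ to be non-zero over $x$; if $x\in Y_\alpha$, $x\neq p$, is not a stratified zero of $\omega_\eta|X_\alpha$ but $\omega_\eta$ still annihilates some $V\in\nu^{-1}(x)$, then the Bertini--Sard argument of Proposition~\ref{prp:MorsificationLemmaForOneForms} (the construction of $\Lambda_2$) shows that such $x$ form a discrete set near $p$, which shrinking $\varepsilon$ eliminates; and over $x=p$ one uses that the Nash fibre $\nu^{-1}(p)$ has dimension at most $d(\alpha)-1$, so that the set $\{\psi\in\Hom(\CC^n,\CC) : \psi|_V=\omega(p)|_V\text{ for some }V\in\nu^{-1}(p)\}$ is a proper subvariety of $\Hom(\CC^n,\CC)$; choosing $l$ generic and $\eta\neq0$ outside finitely many values makes $\omega_\eta=\omega-\D(\eta l)$ annihilate no $V\in\nu^{-1}(p)$.

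It remains to identify the local index at each surviving zero $z=(x,T_xX_\alpha)$ with $x\in\mathscr S_\alpha$. Near $z$ the map $\nu$ is biholomorphic onto the smooth manifold $\mathscr S_\alpha$, under which $\tilde\Omega^1$ becomes $\Omega^1_{\mathscr S_\alpha}$ and $\nu^*\omega_\eta$ becomes the holomorphic section $\omega_\eta|\mathscr S_\alpha$; part i) of Definition~\ref{def:SimpleStratifiedZeroOfAOneForm} says its zero at $x$ is transverse to the zero section, so the local index is $+1$, since near the zero the section agrees to first order with a $\CC$-linear isomorphism and the complex general linear group is connected. Summing over all zeros yields $\Eu^{\omega}(X_\alpha,p)=\mu^\omega(\alpha;X,p)$. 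The step I expect to be the main obstacle is the middle paragraph: controlling the zeros of the pulled-back section $\nu^*\omega_\eta$ over the frontier $Y_\alpha$, and especially ruling out a zero over the central fibre $\nu^{-1}(p)$ itself --- this last point goes beyond what Proposition~\ref{prp:MorsificationLemmaForOneForms} delivers and requires the extra dimension estimate for Nash fibres.
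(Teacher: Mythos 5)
Your proposal is correct and follows essentially the same route as the paper's proof: perturb $\omega$ to $\omega_\eta=\omega-\eta\,\D l$ via Proposition \ref{prp:MorsificationLemmaForOneForms}, invoke homotopy invariance of the obstruction class on $\nu^{-1}(\partial B_\varepsilon\cap X_\alpha)$, observe that the zeroes of $\nu^*\omega_\eta$ lie only over $\mathscr S_\alpha$, and count each transverse zero with local index $+1$. Your extra care over the frontier $Y_\alpha$ and the central fibre $\nu^{-1}(p)$ (the dimension count $\dim\nu^{-1}(p)\le d(\alpha)-1$ and genericity of $\eta$) is a welcome sharpening of what the paper disposes of more tersely through the $\Lambda_2$-part of the proof of Proposition \ref{prp:MorsificationLemmaForOneForms} and the choice of $B_\varepsilon$.
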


\begin{proof}
  Choose a representative 
  \[
    W = (\omega_t,t) \colon U' \times T \to \CC^n \times T
  \]
  of an unfolding of $\omega|(X,p)$ and a ball $B_\varepsilon \subset U'$ as in 
  Proposition \ref{prp:MorsificationLemmaForOneForms}. The 
  Euler obstruction of $\omega$ at $(X_\alpha,p)$ depends only on 
  its obstruction class 
  \[
    \Obs(\nu^*\omega) \in 
    H^{2d}(\nu^{-1}(B_\varepsilon \cap X_\alpha), \nu^{-1}(\partial B_\varepsilon \cap X_\alpha)).
  \]
  Being a homotopy invariant, this class does not change under small perturbations and it 
  is therefore evident from the definitions that for every $\eta \in T$ and every $\alpha \in A$ 
  one has 
  \begin{eqnarray*}
    \Eu^\omega(X_\alpha,p) &=& 
    \left\langle \Obs(\nu^* \omega), 
    \left[ \nu^{-1}( B_\varepsilon \cap X_\alpha), \nu^{-1}(\partial B_\varepsilon \cap X_\alpha) \right]
    \right\rangle \\
    &=& 
    \left\langle \Obs(\nu^* \omega_t), 
    \left[ \nu^{-1}( B_\varepsilon \cap X_\alpha), \nu^{-1}(\partial B_\varepsilon \cap X_\alpha) \right] 
    \right\rangle. 
  \end{eqnarray*}
  We may therefore select one $\eta \neq 0$ and use $\omega_\eta$ 
  instead 
  of $\omega$ to compute the Euler obstruction. The evaluation of the 
  obstruction class 
  counts the number of zeroes of $\omega_\eta$. Observe that by construction, $\nu^* \omega$ 
  is nonzero at any 
  point $(x,V) \in \tilde X_\alpha \setminus \nu^{-1}(\mathscr S_\alpha)$, 
  because $\omega_\eta$ does not annihilate any limiting tangent space $V$ at $x$.
  Thus, the zeroes of $\nu^* \omega_\eta$ are located in $\nu^{-1}( \mathscr S_\alpha)$.
  At every such zero $(x,V) \in \nu^{-1}(\mathscr S_\alpha)$ of $\omega_\eta$ 
  the intersection 
  of $\sigma(\omega_\eta|\mathscr S_\alpha)$ and the zero section in 
  $\Omega^1_{\mathscr S_\alpha}$ is transverse with positive orientation 
  and therefore contributes an increment of 
  $1$ to the Euler obstruction. 
  Consequently, $\Eu^\omega(X_\alpha,p)$ coincides with $\mu^\omega(\alpha; X,p)$.
\end{proof}

\begin{corollary}
  \label{cor:MuEqualsEuForFunctions}
  Whenever $f \colon U \to \CC$ is a holomorphic function 
  with isolated singularity on $(X,p)$, we have 
  \[
    \mu_f(\alpha;X,p) = \Eu^{\D f}(X_\alpha,p).
  \]
\end{corollary}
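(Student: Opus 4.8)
The plan is to deduce this directly from Proposition \ref{prp:MuEqualsEu} by specializing the $1$-form $\omega$ to the differential $\D f$, so that the only real task is to match up the two sets of definitions on either side of the asserted equality.

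First I would check that $\D f$ has an isolated zero on $(X,p)$ in the stratified sense of Definition \ref{def:StratifiedNonZeroPointOfAOneForm} if and only if $f$ has an isolated singularity on $(X,p)$ in the stratified sense of Definition \ref{def:IsolatedSingularityOfAFunction}. This is immediate from comparing Definitions \ref{def:StratifiedRegularPoint} and \ref{def:StratifiedNonZeroPointOfAOneForm}: a point $x$ lying in a stratum $\mathscr S_\alpha$ is a regular point of $f|\mathscr S_\alpha$ exactly when $\D f(x)$ does not annihilate $T_x \mathscr S_\alpha$, i.e. exactly when $\D f$ is nonzero at $x$ in the stratified sense. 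Hence Proposition \ref{prp:MuEqualsEu} applies to $\omega = \D f$ and yields $\mu^{\D f}(\alpha;X,p) = \Eu^{\D f}(X_\alpha,p)$.

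It then remains to identify $\mu^{\D f}(\alpha;X,p)$ with $\mu_f(\alpha;X,p)$. By Corollary \ref{cor:ExistenceOfMorsifications} a morsification of $f|(X,p)$ may be taken of the form $f_t = f - t\cdot l$ for a generic linear form $l \in \Hom(\CC^n,\CC)$; then $\D f_t = \D f - t\cdot \D l$ is precisely an unfolding of the $1$-form $\D f$ of the shape produced by Proposition \ref{prp:MorsificationLemmaForOneForms}. Since the admissible sets of linear forms in both statements are complements of proper analytic subsets, and hence intersect, we may arrange that one and the same $l$ serves for both. For $\eta \neq 0$ sufficiently small, the simple zeroes of $\D f_\eta$ on a stratum $\mathscr S_\alpha$, in the sense of Definition \ref{def:SimpleStratifiedZeroOfAOneForm}, coincide with the stratified Morse critical points of $f_\eta$ on $\mathscr S_\alpha$, because for $\omega = \D f_\eta$ the two conditions of Definition \ref{def:SimpleStratifiedZeroOfAOneForm} reduce exactly to the two conditions of Definition \ref{def:StratifiedMorseCriticalPoint}, as already observed below that definition. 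Counting these points on $\mathscr S_\alpha$ therefore gives $\mu^{\D f}(\alpha;X,p) = \mu_f(\alpha;X,p)$, and combining with the previous paragraph establishes the claim.

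Since every step is merely a translation between the language of functions and that of $1$-forms already set up in Sections \ref{sec:Morsifications} and \ref{sec:EulerObstruction}, I expect no genuine obstacle here; the only point requiring a little care is the invocation of the density of the admissible sets of linear forms, so that a single choice of $l$ simultaneously realizes a morsification of $f$ and an unfolding of $\D f$ meeting the hypotheses of Proposition \ref{prp:MorsificationLemmaForOneForms}.
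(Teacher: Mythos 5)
Your proposal is correct and follows the same route the paper takes (implicitly, since the paper treats the corollary as immediate): specialize Proposition \ref{prp:MuEqualsEu} to $\omega = \D f$ and use the already-noted identifications that simple stratified zeroes of $\D f_t$ are exactly stratified Morse critical points of $f_t$, whence $\mu^{\D f}(\alpha;X,p) = \mu_f(\alpha;X,p)$. Your extra care about choosing a single generic linear form $l$ serving both Proposition \ref{prp:MorsificationLemmaForOneForms} and Corollary \ref{cor:ExistenceOfMorsifications} is sound but not needed, since the paper defines $\mu^{\D f}$ and $\mu_f$ via any admissible unfolding and their independence of that choice is part of the setup.
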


\begin{example}
  \label{exp:EulerObstructionOfAOneForm}
  We continue with Example \ref{exp:HomologyDecomposition}. For 
  $\alpha = 0$ the real link of $(\overline{\mathscr S_0},0)$ 
  is empty and the Euler obstruction is $1$ by convention.

  In the case $\alpha = 1$ the closure $X_1 = \overline{\mathscr S}_1$ of 
  the stratum $\mathscr S_1$ is 
  already a smooth line. Consequently, the Nash modification 
  $\nu \colon \tilde X_1 \to X_1$ is an isomorphism and 
  $\tilde \Omega^1$ coincides with the usual sheaf of 
  K\"ahler differentials. In this case, the Euler obstruction 
  of $\D f$ on $(X_1,0)$ coincides with the degree of 
  the map 
  \[ 
    \frac{\D f}{|\D f|} \colon \partial B_\varepsilon \cap X_1 \to S^1.
  \]
  Since $0 \in X_1$ is a classical Morse critical point, 
  $\D f$ has a simple, isolated zero on $(X_1,0)$ and therefore 
  \[
    \Eu^{\D f}(X_1,0) = \deg \frac{\D f}{|\D f|} = 1.
  \]
  In this particular case of a function on a complex line, the computation 
  of the Euler obstruction reduces to Rouch\'e's theorem.

  For $\alpha = 2$ we really need to work with the Nash modification and 
  the morsification $F=(f_t,t)$ of $f|(X,0)$.
  To this end, we identify $\Grass(2,3)$ with its dual Grassmannian 
  $\Grass(1,3) \cong \PP^2$ via 
  \[
    V \mapsto V^\perp = \{ \varphi \in \Hom(\CC^3,\CC) : \varphi|V = 0 \}.
  \]
  In homogeneous coordinates $(s_0:s_1:s_2)$ of $\PP^2$ the rational 
  map $\Phi$ from (\ref{eqn:RationalMapForNashBlowup}) is given by 
  the differential of $h$:
  \[
    \Phi \colon \mathscr S_2 \to \PP^2, \quad 
    \begin{pmatrix}
      x \\ y\\ z
    \end{pmatrix} 
    \mapsto 
    \begin{pmatrix}
      s_0 \\ s_1 \\ s_2
    \end{pmatrix} =
    \begin{pmatrix}
      -z^2 \\ 2y \\ -2xz
    \end{pmatrix}
  \]
  The equations for $\tilde X \subset \PP^2 \times \CC^3$ are rather 
  complicated, but they simplify in the canonical charts of $\PP^2 \times \CC^3$. 
  We will consider the chart $s_0 \neq 0$, leaving the computations 
  in the other charts to the reader. The equations for $\tilde X$ read 
  \[
    x = \frac{1}{4}z^2s_1^2, \quad 
    y = -\frac{1}{2}z^2s_1, \quad 
    s_2 = \frac{1}{2}zs_1^2.
  \]
  In particular, we can use $(z,s_1)$ as coordinates on $\tilde X \cap \{ s_0 \neq 0 \} \cong \CC^2$.
  The exceptional set $E \subset \tilde X$, 
  i.e. the set of points $q\in \tilde X$, at which $\nu \colon \tilde X \to X$ is not 
  a local isomorphism, is the preimage of the $x$-axis in $\CC^3$. In the above coordinates 
  it is given by 
  \[
    E = \{ z = 0 \} = \{ 0 \} \times \CC \subset \CC^2 \cong \tilde X \cap \{ s_0 \neq 0 \}.
  \]
  
  Let $\OO(-1)$ be the (relative) tautological bundle on $\PP^2 \times \CC^3$.
  The dual bundle $\OO(1)$ has a canonical set of global sections $e_0,e_1,e_2$ 
  in correspondence with the homogeneous coordinates $(s_0:s_1:s_2)$.
  With these choices the differential of $f_t = y^2 - (x-z)^2 - t(x+2z)$ pulls back to 
  \begin{eqnarray*}
    \nu^* \D f_t &=&  (-2(x-z) - t) \cdot  e_0 
    + 2y \cdot  e_1 
    + (2(x-z) - 2 t)\cdot  e_2
  \end{eqnarray*}
  We consider $\nu^* \D f_t$ as a section in $\tilde \Omega^1$, the dual of 
  the Nash bundle $\tilde T$. Note that $\tilde T$
  appears as part of the Euler sequence
  \[
    \xymatrix{
      0 \ar[r]&
      \tilde T \ar[r]&
      \mathcal O^3_{\tilde X} \ar[r]&
      \mathcal \OO_{\tilde X}(1) \ar[r]&
      0
    }
  \]
  on $\tilde X$. The standard trivialization of $\tilde T$ in the chart $s_0 \neq 0$
  is given by the sections 
  \[
    v_1 =
    \begin{pmatrix}
      -s_1 \\ 1 \\
      0
    \end{pmatrix}, \qquad
    v_2 =
    \begin{pmatrix}
      -s_2 \\ 0 \\ 1
    \end{pmatrix}
  \]
  and therefore the zero locus of $\nu^* \D f_t$ on $\tilde X$ is given by 
  the equations $\nu^*\D f_t(v_1) = \nu^* \D f_t(v_2) = 0$. Substituting 
  all the above expressions we obtain 
  \begin{eqnarray*}
    \nu^* \D f_t(v_1) &=&  (-s_1) \left( z^2 - \frac{1}{2}z^2 s_1^2 + 2z -t\right)\\
    \nu^* \D f_t(v_2) &=&  \left( 1 + \frac{1}{2}z s_1^2 \right)\cdot 
    \left( \frac{1}{2} z^2 s_1^2 - 2z \right) + t 
    \left( \frac{1}{2} z s_1^2 - 2 \right).
  \end{eqnarray*}
  It is easy to see that for $t = 0$ the exceptional set $E = \{z=0\}$ 
  is contained in the zero locus of $\nu^* \D f_0$. In particular, the zero locus is 
  non-isolated and we can not use $\nu^* \D f_0$ to compute 
  the Euler obstruction as in the proof of Proposition \ref{prp:MuEqualsEu}. 
  
  For $\eta \neq 0$, however, the zero locus of $\nu^* \D f_\eta$ consists of 
  only finitely many points. 
  A primary decomposition reveals that there are seven branches 
  \[
    \tilde \Gamma_1(t) =
    \begin{pmatrix}
      -t \\ 0 
    \end{pmatrix}, \quad 
    \tilde \Gamma_{2,3}(t) = 
    \begin{pmatrix}
      \sqrt{t} \\ \pm \frac{2}{\sqrt[4]{t}} 
    \end{pmatrix},\quad 
    \tilde \Gamma_{4,5}(t) = 
    \begin{pmatrix}
      -\sqrt{t} \\ \pm\frac{2i}{\sqrt[4]{t}} 
    \end{pmatrix},\quad 
    \tilde \Gamma_{6,7} =
    \begin{pmatrix}
      -3 \\ \pm \frac{\sqrt{6-2t}}{3}
    \end{pmatrix}
  \]
  in the local coordinates $(z,s_1)$ of $\tilde X$. 
  They are precisely taken to the corresponding branches $\Gamma_i(t)$ from 
  Example \ref{exp:Morsification} by $\nu$. Again, only the first five of them have 
  limit points close to $\nu^{-1}(\{0\})$ for $t \to 0$, i.e. only the first five branches 
  contribute to $\Eu^{\D f}(X,0)$ for sufficiently small $\varepsilon \gg \eta >0$. 
  Therefore, 
  \[
    \Eu^{\D f}(X,0) = 5 = \mu_f(2;X,0), 
  \]
  as anticipated.
\end{example}

\begin{remark}
  Definition \ref{def:MultiplicityOfOneForms} 
  and Proposition \ref{prp:MorsificationLemmaForOneForms} suggest yet another interpretation 
  of the numbers $\mu^\omega(\alpha;X,p)$, namely as \textit{microlocal intersection numbers}. 
  For a stratum $\mathscr S_\alpha$ of $X$ and its closure $X_\alpha$ one can define 
  \textit{conormal cycle} of $X_\alpha$ as 
  \[
    \Lambda_\alpha = \{ (\varphi,x) \in \Omega^1_U : x \in \mathscr S_\beta \subset X_\alpha, \quad 
    \varphi|T_x \mathscr S_\beta = 0 \}.
  \]
  This is a Whitney stratified subspace of the total space of the vector bundle $\Omega^1_U$. 
  The Whitney conditions imply that the fundamental class $[\Lambda_\alpha] \in H_{2n}^{BM}(U)$ 
  is a well defined cycle in Borel-Moore homology. So is the class $[\sigma(\omega)]$ of 
  the section $\sigma(\omega)$ of $\omega$ on $U$. In this context, 
  Proposition \ref{prp:MorsificationLemmaForOneForms} appears as a moving lemma, which puts 
  the two cycles in a general position. Clearly, the number of intersection points of 
  $[\Lambda_\alpha]$ and $[\sigma(\omega)]$ coincides with 
  $\mu^\omega(\alpha;X,p) = \Eu^\omega(X_\alpha,p)$.
  See also \cite[Corollary 5.4]{BrasseletMasseyParameswaranSeade04}.
\end{remark}

\section{The Euler obstruction as a homological index}
\label{sec:TheMilnorNumberAsAHomologicalIndex}

Throughout this section let again $U \subset \CC^n$ be an open 
domain and 
$X \subset U$ a closed, equidimensional, reduced, complex analytic space. 

For a holomorphic function $f \colon U \to \CC$ with an 
isolated singularity on $X$ at a point $p\in X$,
Proposition \ref{prp:MuEqualsEu} and Corollary \ref{cor:MuEqualsEuForFunctions}
suggest the following 
interpretation of the Euler obstruction: 
In a morsification $F = (f_t,t)$ of $f|(X,p)$ the singularities 
of $f|(X,p)$ become Morse critical points on the regular 
strata $\mathscr S_\alpha$.
In this sense, a morsification 
separates the singularities of the function $f|(X,p)$ from 
the singularities of the space $(X,p)$ itself. 
The Euler obstructions $\Eu^{\D f}\left(X_\alpha,p\right)$ of $\D f$ on the 
closures $X_\alpha = \overline{\mathscr S}_\alpha$ of the strata know 
the outcome of this separation beforehand and even without 
a given concrete morsification. 
A particular, but remarkable consequence of 
these considerations is that $\Eu^{\D f}(X_\alpha,p) = 0$ for all $\alpha \in A$ 
whenever 
$f$ does not have a singularity on $(X,p)$ -- independent 
of the singularities of the germ $(X,p)$ itself.

Suppose for the moment that also the space $(X,p)$ has itself only an isolated singularity 
so that the homological index $\Ind_{\hom}(\D f, X,p)$ as in 
\cite{EbelingGuseinZadeSeade04} is defined.
The comparison of $\Eu^{\D f}(X,p)$ with $\Ind_{\hom}(\D f, X, p)$ 
is based on 
the fact that both the Euler obstruction and the homological 
index satisfy 
the law of conservation of number and 
that they coincide at Morse critical 
points. In an arbitrary unfolding $F = (f_t,t)$ of 
$f|(X,p)$ we can therefore use both the Euler obstruction 
and the homological index to count the number of 
Morse critical points on $X_{\reg}$ arising from $f|(X,p)$.
But for a fixed unfolding parameter $t = \eta$
only the Euler obstruction $\Eu^{\D f_\eta}(X,p)$ can be 
used to measure whether $f_\eta$ is still singular at $(X,p)$ 
or whether all singularities of $f$ have left from the point $p$ 
for $t = \eta\neq 0$.
If the latter is the case -- as for example in a morsification -- 
the homological index $\Ind_{\hom}(\D f_{\eta},X,p)$ is 
\[
  \Ind_{\hom}(\D f_\eta,X,p) = 
  \Ind_{\hom}(\D f,X,p) - 
  \Eu^{\D f}(X,p) =
  - k'(X,p).
\]
The number $k'(X,p)$ is an invariant of the space $(X,p)$, but unknown in general. 
Therefore, the homological index $\Ind_{\hom}(\D f,X,p)$ can not be 
used to count the number of Morse critical points on $X_{\reg}$ in a morsification; 
it only seperates the singularities 
of the function $f$ from the singularities of $X$ up to an unknown quantity.

We return to the more general setting of an arbitrarily singular $X \subset U$.
Suppose $\omega$ is a holomorphic $1$-form on $U$ and let $p\in X$ be a point 
for which $\omega$ has an isolated zero 
on $(X,p)$. Then $\Eu^\omega(X_\alpha,p)$ is counting the number of 
simple zeroes 
on $\mathscr S_\alpha$ 
close to $p$
in a generic perturbation 
$\omega_\eta$ of $\omega$. It is evident from the construction that 
we may restrict our attention to the case where $X = X_\alpha = \overline{\mathscr S}_\alpha$ 
is irreducible and reduced and we only need to consider 
isolated zeroes of $\omega_\eta$ on $X_{\reg}$. 
Translating the previous discussion to this setting 
we see that -- conversely -- a homological index $I(\omega,X,p)$ 
has to coincide with the Euler obstruction $\Eu^\omega(X,p)$ whenever the following 
two conditions are met:
\begin{enumerate}[1.]
  \item $I(\omega,X,p)$ coincides with $\Eu^\omega(X,p)$ 
    whenever $p \in X$ is a smooth point of $X$.
  \item For every singular point $p$ of $X$ one has 
    \[
      I(\omega,X,p) = 0
    \]
    whenever $\omega$ is a $1$-form such that $\omega|(X,p)$ 
    is nonzero or has at most a simple zero at $p$ in the stratified sense.
\end{enumerate}
It is therefore worthwhile to investigate once again the 
structural reasons as to why 
1.\! is satisfied for $\Ind_{\hom}(\omega,X,p)$ 
at smooth points and why $\Eu^{\omega}(X,p) = 0$
whenever $\omega$ has at most a simple zero on $X$ at a point 
$p$ on a lower dimensional stratum.
We will exploit these 
reasons for the construction of a homological index 
$I(\omega,X_\alpha,p)$ which satisfies 1. and 2. simultaneously.

\medskip

The fact that the homological index of a $1$-form $\omega$ with an isolated 
zero at a smooth point $(X,p) \cong (\CC^n,p)$ coincides with its Euler 
obstruction and its topological index is based on the following fact. 
In local coordinates $x_1,\dots,x_n$ of $(X,p)$, the complex 
(\ref{eqn:ComplexWedgeWithOmega}) becomes a \textit{Koszul complex} 
on the local ring $\OO_{X,p}$ in the components 
of $\omega = \sum_{i=1}^n \omega_i \D x_i$. 
Since $\OO_{X,p}$ is Cohen-Macaulay and the zero locus of 
$\omega$ is isolated, the $\omega_i$ must form a regular sequence 
on $\OO_{X,p}$ and the following lemma applies,
cf. \cite[Corollary 1.6.19]{BrunsHerzog93}.

\begin{lemma}
  \label{lem:KoszulComplex}
  Let $(R, \mathfrak m)$ be a Noetherian local ring, $M = R^r$ a free 
  module, $v = \left(v_1,\dots,v_r\right)^T \in M$ an element and 
  \begin{equation}
    \label{eqn:KoszulKomplexOfAnElement}
    K^\bullet(v,R):
    \xymatrix{
      0 \ar[r] & 
      R \ar[r]^{v \wedge} & 
      M \ar[r]^{v \wedge} & 
      \bigwedge^2 M \ar[r]^{v \wedge} & 
      \cdots \\
      & & \cdots \ar[r]^{v \wedge} & 
      \bigwedge^{r-1} M \ar[r]^{v \wedge} & 
      \bigwedge^{r} M \ar[r] & 
      0
    }
  \end{equation}
  the Koszul complex associated to $v$. We consider $R = \bigwedge^0 M$ to be 
  situated in degree zero, $M = \bigwedge^1 M$ in degree one, etc.
  \begin{enumerate}[i)]
    \item Whenever $(v_1,\dots,v_r)$ is a regular sequence on 
      $R$ as an $R$-module, then (\ref{eqn:KoszulKomplexOfAnElement}) is 
      exact except for the last step where we find 
      \[
	H^r\left( K^\bullet(v,R) \right) = R / \langle v_1,\dots,v_r\rangle.
      \]
    \item Whenever $v \notin \mathfrak m M$, the Koszul complex is exact. 
  \end{enumerate}
\end{lemma}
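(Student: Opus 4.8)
The plan is to recognise the complex $K^\bullet(v,R)$ as a reindexed copy of the ordinary homological Koszul complex of the sequence $(v_1,\dots,v_r)$, and then to invoke the classical homological properties of the latter. Write $e_1,\dots,e_r$ for the standard basis of $M=R^r$, so that $v=\sum_i v_i e_i$, and fix the generator $e_1\wedge\dots\wedge e_r$ of the invertible module $\bigwedge^r M$. The pairing $\bigwedge^p M \times \bigwedge^{r-p} M \to \bigwedge^r M \cong R$ is perfect, hence identifies $\bigwedge^{r-p}M$ with $\Hom_R(\bigwedge^p M, R)$; under this identification left multiplication by $v$ on $\bigwedge^\bullet M$ becomes, up to a sign depending only on the degree, the transpose of contraction against $v$. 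This produces an isomorphism of complexes between $(\bigwedge^\bullet M,\, v\wedge-)$ and the homological Koszul complex $K_\bullet(v_1,\dots,v_r;R)$ read with the opposite grading, whose differential is $e_{i_1}\wedge\dots\wedge e_{i_k}\mapsto\sum_j(-1)^{j-1}v_{i_j}\,e_{i_1}\wedge\dots\wedge\widehat{e_{i_j}}\wedge\dots\wedge e_{i_k}$. Consequently $H^p(K^\bullet(v,R))\cong H_{r-p}(K_\bullet(v_1,\dots,v_r;R))$ for every $p$. Verifying that this reindexing map is genuinely a chain map, i.e. tracking the signs, is the one mildly delicate point; everything else is formal.

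For part i) I would then quote the standard fact that, for an $R$-regular sequence $(v_1,\dots,v_r)$, the complex $K_\bullet(v_1,\dots,v_r;R)$ is a free resolution of $R/\langle v_1,\dots,v_r\rangle$, so that $H_0=R/\langle v_1,\dots,v_r\rangle$ and $H_k=0$ for $k>0$. This is itself proved by induction on $r$ from the decomposition $K_\bullet(v_1,\dots,v_r;R)\cong K_\bullet(v_1,\dots,v_{r-1};R)\otimes_R K_\bullet(v_r;R)$, the associated long exact homology sequence, and the hypothesis that $v_r$ is a nonzerodivisor on $R/\langle v_1,\dots,v_{r-1}\rangle$. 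Transporting this through the isomorphism of the previous step yields exactly $H^r(K^\bullet(v,R))=R/\langle v_1,\dots,v_r\rangle$ together with $H^p(K^\bullet(v,R))=0$ for $p<r$.

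For part ii), suppose $v\notin\mathfrak m M$. By Nakayama's lemma $v$ can be completed to an $R$-basis of $M$, so there is an automorphism $\varphi$ of $M$ with $\varphi(e_1)=v$; functoriality of the exterior algebra makes $\bigwedge^\bullet\varphi$ an isomorphism of complexes $(\bigwedge^\bullet M,\, e_1\wedge-)\xrightarrow{\ \sim\ }(\bigwedge^\bullet M,\, v\wedge-)$. It therefore suffices to see that $(\bigwedge^\bullet M,\, e_1\wedge-)$ is exact, and for that one exhibits the interior product $\iota=\iota_{e_1^\ast}$ as a contracting homotopy: the identity $e_1\wedge\iota(\alpha)+\iota(e_1\wedge\alpha)=\alpha$ for all $\alpha\in\bigwedge^\bullet M$ shows $\iota$ is a null-homotopy of $\id$, so the complex is acyclic. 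Equivalently, whenever some $v_i$ is a unit the tensor factor $K_\bullet(v_i;R)$ is already contractible, hence so is the whole Koszul complex.

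The only genuine obstacle throughout is the sign bookkeeping in the first step, establishing the duality/reindexing isomorphism of complexes; once that is in place, part i) is a citation of the classical Koszul resolution and part ii) is the one-line homotopy above. If one prefers to avoid the reindexing altogether, both parts can alternatively be obtained by a direct induction on $r$ using the multiplicative structure of $\bigwedge^\bullet M$, but the sign check reappears there in a different guise.
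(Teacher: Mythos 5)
Your proof is correct. The paper does not actually prove this lemma; it simply cites \cite[Corollary 1.6.19]{BrunsHerzog93}, and what you have written is precisely the standard argument behind that citation: the perfect pairing $\bigwedge^p M \times \bigwedge^{r-p}M \to \bigwedge^r M \cong R$ identifies $(\bigwedge^\bullet M, v\wedge -)$ with the homological Koszul complex up to reindexing and degreewise sign adjustments (which do not affect cohomology), so part i) reduces to the classical fact that the Koszul complex of a regular sequence resolves $R/\langle v_1,\dots,v_r\rangle$, while for part ii) the observation that $v\notin\mathfrak m M$ lets you complete $v$ to a basis (Nakayama plus the fact that $r$ generators of a free rank-$r$ module over a local ring form a basis) and the Cartan identity $e_1\wedge\iota(\alpha)+\iota(e_1\wedge\alpha)=\alpha$ gives the contracting homotopy. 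Both steps are sound as stated, so your write-up supplies a complete proof of what the paper treats as a quoted result.
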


Consequently, $\Ind_{\hom}(\omega,X,p) = \dim_{\CC} \OO_{X,p}/\langle \omega_1,\dots,\omega_n\rangle$
which evaluates to $1$ on simple zeroes of $\omega$.
Part ii) of this lemma explains why the homological index of $\omega$ is zero at all smooth 
points $q \in X$ where $\omega$ does not vanish.

\medskip

From this viewpoint, the difficulty in comaring the Euler obstruction 
of a $1$-form $\omega$ at a \textit{singular} point $p$ of $X$ with its 
homological index at $p$ stems from the fact that the restriction 
$\omega|(X,p)$ is not anymore an element of a free module, but 
of the module of K\"ahler differentials $\Omega_{X,p}^1$.
The key idea is to address this issue by replacing $\Omega^1_{X,p}$ 
and $\omega$ with the Nash bundle $\tilde \Omega^1$ and the section 
$\nu^* \omega$. In order to work with finite $\OO_X$-modules we need 
to consider the derived pushforward of the associated bundles.
Analogous to Lemma \ref{lem:KoszulComplex} ii) we find 
the following.

\begin{lemma}
  \label{lem:DerivedPushforwardAtRegularPoints}
  Let $U \subset \CC^n$ be an open domain, $X\subset U$ an irreducible and reduced 
  closed 
  analytic subspace of dimension $d$, and $\nu \colon \tilde X \to X$ its 
  Nash modification.
  For any point $p \in X$
  the stalk at $p$ of the complex of sheaves 
  \[
    \mathbb R \nu_* \left( \tilde \Omega^\bullet, \nu^* \omega \wedge - \right)_p
  \]
  is exact, whenever $\omega$ does not annihilate any limiting tangent space 
  $V$ from $X_{\reg}$ at $p$. 
\end{lemma}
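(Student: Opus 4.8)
The plan is to prove that the complex $\left(\tilde\Omega^\bullet, \nu^*\omega \wedge - \right)$ is already exact \emph{as a complex of sheaves} on a whole neighbourhood of the fiber $\nu^{-1}(\{p\})$ in $\tilde X$, and then to conclude that the stalk at $p$ of its derived pushforward is exact by invoking properness of $\nu$.

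First I would identify the points of $\nu^{-1}(\{p\})$ with the pairs $(p,V)$, where $V$ is a limiting tangent space from $X_{\reg}$ at $p$. By the defining formula (\ref{eqn:PullbackOfOneForms}), the section $\nu^*\omega$ of the rank-$d$ bundle $\tilde\Omega^1$ vanishes at such a point precisely when $\omega(p)$ annihilates $V$; so the hypothesis means that $\nu^*\omega$ is nowhere zero along $\nu^{-1}(\{p\})$, and hence on some open neighbourhood $\tilde U \subset \tilde X$ of this fiber. On $\tilde U$, after choosing a local trivialization of $\tilde\Omega^1$, the complex $\left(\tilde\Omega^\bullet, \nu^*\omega\wedge-\right)$ is nothing but the Koszul complex $K^\bullet(v,\OO_{\tilde X})$ of the tuple $v$ of components of $\nu^*\omega$; at every point of $\tilde U$ at least one component is a unit, i.e. $v \notin \mathfrak m \cdot \OO_{\tilde X}^{\,d}$ there, and so Lemma \ref{lem:KoszulComplex} ii) gives exactness at every stalk. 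Since exactness of a complex of sheaves is a stalkwise condition, $\left(\tilde\Omega^\bullet, \nu^*\omega\wedge-\right)$ is exact on all of $\tilde U$, i.e. quasi-isomorphic there to the zero complex.

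To pass from this to the statement about $p$, I would use that $\nu$ is proper, being the restriction to the closed analytic subset $\tilde X$ of the projection $U \times \Grass(d,n) \to U$, which is proper since $\Grass(d,n)$ is compact. Hence $\nu^{-1}(\{p\})$ is compact and there is an open $V \ni p$ in $X$ with $\nu^{-1}(V) \subset \tilde U$. Writing $\mathcal C^\bullet := \left(\tilde\Omega^\bullet,\nu^*\omega\wedge-\right)$ and $\nu'\colon\nu^{-1}(V)\to V$ for the restriction, base change along the open immersion $V \hookrightarrow X$ identifies $\left(\mathbb R\nu_*\mathcal C^\bullet\right)\big|_V$ with $\mathbb R\nu'_*\left(\mathcal C^\bullet\big|_{\nu^{-1}(V)}\right)$, and the latter is $\mathbb R\nu'_*$ of a complex quasi-isomorphic to $0$, hence quasi-isomorphic to $0$ itself. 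Equivalently, $\left(\mathbb R^k\nu_*\mathcal C^\bullet\right)_p = \varinjlim_{V'\ni p}\mathbb H^k\left(\nu^{-1}(V'),\mathcal C^\bullet\right)$, and the terms with $\nu^{-1}(V')\subset\tilde U$ vanish because $\mathcal C^\bullet$ is exact there. Taking stalks at $p$ finishes the proof.

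The hard part — really the only point requiring genuine care — will be this last step: one has to exploit properness of $\nu$ to shrink the arbitrary neighbourhood $\tilde U$ of the compact fiber $\nu^{-1}(\{p\})$ to a \emph{saturated} one of the form $\nu^{-1}(V)$, since for a non-proper map exactness of a complex near a fiber does not by itself force exactness of the derived pushforward at the corresponding point. Once Lemma \ref{lem:KoszulComplex} is in hand, the remaining ingredients — the fiberwise non-vanishing of $\nu^*\omega$ and the stalkwise Koszul computation — are routine.
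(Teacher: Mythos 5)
Your proof is correct and follows essentially the same route as the paper: the hypothesis translates into non-vanishing of $\nu^*\omega$ along $\nu^{-1}(\{p\})$, Lemma \ref{lem:KoszulComplex} ii) (via a local trivialization) gives exactness of the sheaf complex on a neighbourhood of the fiber, and hence the stalk at $p$ of the derived pushforward vanishes. The only difference is that you make explicit the properness/tube argument producing a saturated neighbourhood $\nu^{-1}(V)\subset\tilde U$ and the identification of the stalk with the colimit of hypercohomologies, steps the paper's ``consequently'' leaves implicit.
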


\begin{proof}
  The statement that $\omega$ does not annihilate any limiting tangent space $V$ 
  of a top-dimensional stratum at $p$ is equivalent to saying that 
  $\nu^{*}\omega$ is nonzero at every point $(p,V) \in \tilde X$ 
  in the fiber $\nu^{-1}(\{p\})$ of the Nash modification over $p$. 

  If $\nu^* \omega$ is nonzero then, according to Lemma 
  \ref{lem:KoszulComplex} ii), the complex of sheaves 
  \begin{equation}
    \xymatrix{ 
      0 \ar[r] & 
      \OO_{\tilde X} \ar[r]^{\nu^* \omega \wedge} & 
      \tilde \Omega^1 \ar[r]^{\nu^* \omega \wedge} & 
      \tilde \Omega^2 \ar[r]^{\nu^* \omega \wedge} & 
      \cdots \ar[r]^{\nu^* \omega \wedge} & 
      \tilde \Omega^{d-1} \ar[r]^{\nu^* \omega \wedge} & 
      \tilde \Omega^d \ar[r] & 
      0
    }
    \label{eqn:NashBundleComplex}
  \end{equation}
  is exact along $\nu^{-1}( \{ p\})$ and therefore  
  quasi-isomorphic to the zero complex. Consequently, also the stalk at $p$ of the 
  derived pushforward of this complex has to vanish.
\end{proof}

\begin{theorem}
  \label{thm:MainTheorem}
  Suppose $U \subset \CC^n$ is an open domain, $X \subset U$ a reduced, equidimensional
  complex analytic subspace of dimension $d$, $S = \{ \mathscr S_\alpha\}_{\alpha \in A}$ 
  a complex analytic stratification satisfying Whitney's condition A, 
  and $\omega$ a holomorphic $1$-form with 
  an isolated zero on $X$ in the stratified sense at a point $p$. Then 
  \begin{equation}
    \Eu^\omega(X,p) = 
    (-1)^{d} 
    \chi\left( \RR\nu_* \left( \tilde \Omega^\bullet, \nu^* \omega \wedge - \right)_p \right) 
    \label{eqn:MainFormula}
  \end{equation}
  where $\nu \colon \tilde X \to X$ is the Nash modification and 
  $(\tilde \Omega^\bullet, \nu^*\omega\wedge - )$ is the complex 
  of coherent sheaves on $\tilde X$ given by the exterior powers 
  of the Nash bundle and multiplication with $\nu^*\omega$.
\end{theorem}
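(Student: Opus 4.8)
The strategy is to verify that the right-hand side of \eqref{eqn:MainFormula}, viewed as a function of the $1$-form $\omega$, satisfies exactly the two characterizing properties 1.\ and 2.\ identified in Section~\ref{sec:TheMilnorNumberAsAHomologicalIndex}, together with the law of conservation of number, and then invoke the deformation argument (as in the comparison of $\Eu^\omega$ with $\Ind_{\hom}$) to conclude equality with $\Eu^\omega(X,p)$. More precisely, set
\[
  I(\omega,X,p) := (-1)^d \chi\left( \RR\nu_* \left( \tilde \Omega^\bullet, \nu^* \omega \wedge - \right)_p \right).
\]
I would first check that the complex $\RR\nu_*(\tilde\Omega^\bullet,\nu^*\omega\wedge-)_p$ has finite-dimensional cohomology whenever $\omega$ has an isolated zero on $(X,p)$ in the stratified sense: by Lemma~\ref{lem:DerivedPushforwardAtRegularPoints} the complex $(\tilde\Omega^\bullet,\nu^*\omega\wedge-)$ on $\tilde X$ is exact away from $\nu^{-1}(\{p\})$ in a punctured neighborhood of $p$, so its derived pushforward is supported at $p$; coherence of the $\RR^i\nu_*$ follows from Grauert's direct image theorem for the proper map $\nu$, and a complex of coherent sheaves supported at a point has finite-dimensional stalk cohomology. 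Hence $I(\omega,X,p)$ is well defined.

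The next step is the law of conservation of number. Since $\chi$ of a complex of coherent sheaves with finite-dimensional cohomology is conservative under deformation --- this is the result from \cite{GiraldoGomezMont02} already invoked in the discussion of the homological index of vector fields --- for a representative $X$, a Milnor ball $B_\varepsilon$, and $\omega'$ sufficiently close to $\omega$ with only isolated (stratified) zeroes in $B_\varepsilon\cap X$, one has
\[
  I(\omega,X,p) = \sum_{q \in B_\varepsilon \cap X} I(\omega',X,q).
\]
Here one must be slightly careful: the local contributions are $\chi$ of the stalk at $q$ of the derived pushforward along $\nu$, so the deformation statement should be applied to the coherent complex $\RR\nu_*(\tilde\Omega^\bullet,\nu^*\omega\wedge-)$ downstairs on $X$ --- this is legitimate precisely because $\nu$ is proper, so Grauert's theorem keeps us in the coherent world and the flatness/semicontinuity input of \cite{GiraldoGomezMont02} applies to the resulting finite complex of $\OO_X$-modules.

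It then remains to evaluate $I$ in the two extreme cases. At a smooth point $q\in X_{\reg}$, $\nu$ is a local isomorphism and $\tilde\Omega^\bullet$ is just the Koszul complex on $\Omega^1_{X,q}\cong\OO_{X,q}^{\,d}$ in the components of $\omega$; by Lemma~\ref{lem:KoszulComplex}, $I(\omega,X,q)=\dim_\CC \OO_{X,q}/\langle\omega_1,\dots,\omega_d\rangle$, which is $0$ if $\omega(q)\neq 0$ and $1$ at a simple (transverse) zero --- matching $\Eu^\omega(X,q)$. At a \emph{singular} point $q$ of $X$ where $\omega$ either is nonzero or has at most a simple zero in the stratified sense, one checks $I(\omega,X,q)=0$: if $\omega$ does not annihilate any limiting tangent space from $X_{\reg}$ at $q$, this is immediate from Lemma~\ref{lem:DerivedPushforwardAtRegularPoints}; a simple zero at $q$ lying on a stratum of dimension $<d$ is precisely such a form by condition ii) of Definition~\ref{def:SimpleStratifiedZeroOfAOneForm}, so again $I=0$. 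Combining these with conservation of number and Proposition~\ref{prp:MorsificationLemmaForOneForms}: deforming $\omega$ to $\omega_\eta=\omega-\eta\,\D l$ with $[l]\in\Lambda$, the point $p$ either ceases to be a zero or becomes a simple zero on a possibly lower stratum --- in all cases $I(\omega_\eta,X,p)=0$ --- while the zeroes that split off onto $X_{\reg}$ each contribute $1$ to $I$ and, by Proposition~\ref{prp:MuEqualsEu}, exactly $\mu^\omega(d;X,p)=\Eu^\omega(X,p)$ of them appear; hence $I(\omega,X,p)=\Eu^\omega(X,p)$.

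The main obstacle I anticipate is the sign bookkeeping together with making the conservation-of-number argument rigorous in the derived/relative setting. The factor $(-1)^d$ must be reconciled across the two regimes: in the smooth case the Koszul complex sits in degrees $0,\dots,d$ and $(-1)^d\chi$ of it equals the \emph{length} of the top cohomology, which is positive, whereas on a Morse critical point on $X_{\reg}$ for $\omega=\D f$ this length is $1$ with the correct orientation --- one must confirm that the orientation of the transverse intersection $\sigma(\omega|\mathscr S_\alpha)\pitchfork\sigma(0)$ used in the proof of Proposition~\ref{prp:MuEqualsEu} is consistent with the complex-analytic sign here, which it is because holomorphic intersections carry positive local multiplicities. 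The second delicate point is that \cite{GiraldoGomezMont02} is stated for complexes of coherent sheaves on a (possibly singular) complex space; applying it to $\RR\nu_*(\tilde\Omega^\bullet,\nu^*\omega\wedge-)$ requires knowing this is represented by a bounded complex of coherent $\OO_X$-modules with the right base-change behaviour under the unfolding parameter $t$, which follows from properness of $\nu$ and Grauert's theorem but should be spelled out. Everything else is a formal consequence of the characterization 1.--2.\ already set up in the text.
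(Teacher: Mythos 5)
Your proposal follows essentially the same route as the paper's own proof: finiteness and support at $p$ via Lemma \ref{lem:DerivedPushforwardAtRegularPoints}, conservation of number for the Euler characteristic of the (flat) family of pushed-forward complexes induced by the unfolding of Proposition \ref{prp:MorsificationLemmaForOneForms}, local evaluation giving $1$ at simple zeroes on $X_{\reg}$ (Koszul complex, Lemma \ref{lem:KoszulComplex}) and $0$ at simple zeroes on lower strata, and identification of the resulting count with $\Eu^\omega(X,p)$ via Proposition \ref{prp:MuEqualsEu}. The points you flag (sign bookkeeping and the coherence/flatness input for the conservation-of-number step) are handled in the paper exactly as you anticipate, so no substantive gap remains.
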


\begin{corollary}
  \label{cor:MainCorollary}
  Let $(X,p) \subset (\CC^n,p)$ be a 
  reduced complex analytic space with a
  complex analytic Whitney stratification
  $S = \{\mathscr S_\alpha\}_{\alpha \in A}$.
  Suppose 
  \[
    f : (\CC^n,p) \to (\CC,0)
  \]
  is a holomorphic 
  function with an isolated singularity on $(X,p)$. 
  For $\alpha \in A$ let 
  $\nu \colon \tilde X_\alpha \to X_\alpha$ be the Nash modification 
  of the closure $X_\alpha = \overline{ \mathscr S_\alpha }$ and 
  $\tilde \Omega^k_\alpha$ the $k$-th exterior power of the dual of the 
  Nash bundle on $\tilde X_\alpha$. 
  Then 
  \begin{equation}
    \mu_f(\alpha;X,0) = 
    (-1)^{d(\alpha)} 
    \chi\left( \RR\nu_* \left( \tilde \Omega_\alpha^\bullet, \nu^* \D f \wedge - \right)_p \right).
    \label{eqn:MainFormulaForFunctions}
  \end{equation}
\end{corollary}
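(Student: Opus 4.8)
The plan is to obtain Corollary~\ref{cor:MainCorollary} as a direct specialisation of Theorem~\ref{thm:MainTheorem}. Fix $\alpha \in A$ and apply the Theorem to the space $X_\alpha = \overline{\mathscr S_\alpha}$, the $1$-form $\omega = \D f$, and the base point $p$. Three hypotheses need checking. Since $\mathscr S_\alpha$ is connected, its closure $X_\alpha$ is irreducible and reduced of dimension $d(\alpha)$, hence equidimensional. By the frontier property of Whitney stratifications, $X_\alpha$ is a union of strata of $S$, and those strata equip it with a complex analytic stratification satisfying Whitney's condition A. Finally $\D f$ has an isolated zero on $(X_\alpha,p)$ in the stratified sense: as $f$ has an isolated singularity on $(X,p)$, every $x \in X\setminus\{p\}$ close enough to $p$ — in particular every such $x \in X_\alpha$ — is a regular point of the restriction of $f$ to the stratum through $x$, so $\D f(x)$ does not annihilate the tangent space of that stratum. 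Theorem~\ref{thm:MainTheorem} then gives $\Eu^{\D f}(X_\alpha,p) = (-1)^{d(\alpha)}\chi\bigl(\RR\nu_*(\tilde\Omega^\bullet_\alpha, \nu^*\D f \wedge -)_p\bigr)$, and Corollary~\ref{cor:MuEqualsEuForFunctions} identifies the left-hand side with $\mu_f(\alpha;X,p)$; combining the two yields the stated formula.

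This reduces everything to Theorem~\ref{thm:MainTheorem}, whose proof is the real content; I would organise it as follows. First, the right-hand side is well defined: since $\omega$ has an isolated stratified zero at $p$ and the stratification satisfies condition A, $\nu^*\omega$ is nowhere zero on $\nu^{-1}(B_\varepsilon \cap X \setminus \{p\})$ for a small Milnor ball, so by Lemma~\ref{lem:KoszulComplex} ii) the complex $(\tilde\Omega^\bullet, \nu^*\omega\wedge -)$ is exact there; as $\nu$ is proper, $\RR\nu_*(\tilde\Omega^\bullet, \nu^*\omega\wedge-)$ is a bounded complex of coherent $\OO_X$-modules with cohomology concentrated at $p$, hence of finite length. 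Second, both sides satisfy the law of conservation of number. For $\Eu^\omega$ this is the homotopy invariance of the obstruction class used already in the proof of Proposition~\ref{prp:MuEqualsEu}. For the homological side one fixes a morsification $W = (\omega_t,t)$ as in Proposition~\ref{prp:MorsificationLemmaForOneForms}, forms the relative complex $(\tilde\Omega^\bullet \otimes \OO_T, \nu^*\omega_t\wedge-)$ on $\tilde X\times T$, which is flat over $T$, pushes it forward along the proper map $\nu\times\id$, and invokes the result of \cite{GiraldoGomezMont02} that the Euler characteristic of a complex of coherent sheaves with finite-dimensional cohomology obeys the law of conservation of number; derived base change along $t=0$ and $t=\eta$ recovers the complexes attached to $\omega$ and to $\omega_\eta$.

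Third, one evaluates the local terms of $\omega_\eta$. The cohomology of $\RR\nu_*(\tilde\Omega^\bullet, \nu^*\omega_\eta\wedge-)$ in $B_\varepsilon$ is supported on the points $q$ at which $\nu^*\omega_\eta$ vanishes somewhere over $\nu^{-1}(q)$; by condition A any such $q$ is a stratified zero of $\omega_\eta$, hence a simple zero, so this set is finite. It lies entirely in $X_{\reg}$: if $q$ were on a stratum of positive codimension, the limiting tangent space $V$ with $\nu^*\omega_\eta(q,V)=0$ would come from a higher stratum and be annihilated by $\omega_\eta$, contradicting part ii) of Definition~\ref{def:SimpleStratifiedZeroOfAOneForm}; moreover over points where $\nu^*\omega_\eta$ is nowhere zero on the fibre the stalk is exact by Lemma~\ref{lem:DerivedPushforwardAtRegularPoints}. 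At a simple zero $q \in X_{\reg}$ the map $\nu$ is a local isomorphism and the complex becomes the Koszul complex on the components of $\omega_\eta$ in local coordinates; these form a regular sequence, so Lemma~\ref{lem:KoszulComplex} i) gives cohomology only in degree $d$, equal to $\OO_{X,q}/\langle \omega_{\eta,1},\dots,\omega_{\eta,d}\rangle \cong \CC$, hence local Euler characteristic $(-1)^d$. Summing and multiplying by $(-1)^d$, the right-hand side of $(\ref{eqn:MainFormula})$ equals the number of these transverse zeroes of $\nu^*\omega_\eta$ over $X_{\reg}$; evaluating the homotopy-invariant obstruction class of $\nu^*\omega_\eta$ exactly as in the proof of Proposition~\ref{prp:MuEqualsEu} shows $\Eu^\omega(X,p)$ equals the same count, which proves the Theorem.

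The step I expect to be the main obstacle is the conservation-of-number argument for the homological side: one must set up the relative complex on $\tilde X \times T$, check that its derived pushforward has cohomology supported on a subset of $X\times T$ that is finite over the parameter disc near $(p,0)$, and combine derived base change with the coherent-sheaf conservation-of-number theorem of \cite{GiraldoGomezMont02}. A secondary point requiring care — used above to keep the list of local contributions finite, exhaustive, and located over $X_{\reg}$ — is that Whitney's condition A upgrades ``$\nu^*\omega_\eta$ vanishes at $(q,V)$'' to ``$\omega_\eta$ vanishes on the tangent space of the stratum through $q$'', so that a zero of $\nu^*\omega_\eta$ over a proper stratum is impossible by the very definition of a simple zero.
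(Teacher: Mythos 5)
Your proposal is correct and follows essentially the same route as the paper: the paper's proof of the corollary is precisely the application of Theorem~\ref{thm:MainTheorem} to $X_\alpha=\overline{\mathscr S_\alpha}$ with $\omega=\D f$, combined with the identification $\mu_f(\alpha;X,p)=\Eu^{\D f}(X_\alpha,p)$ from Corollary~\ref{cor:MuEqualsEuForFunctions}. Your additional sketch of the theorem itself (conservation of number via the coherent-sheaf result, vanishing of local contributions at simple zeroes off $X_{\reg}$ by Lemma~\ref{lem:DerivedPushforwardAtRegularPoints}, and the Koszul computation at smooth simple zeroes) likewise matches the paper's argument.
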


\begin{proof}
  We may apply Theorem \ref{thm:MainTheorem} to the space $X_\alpha = \overline{\mathscr S_\alpha}$ 
  and the restriction of the $1$-form $\D f$ to it.
\end{proof}

\begin{proof}(of Theorem \ref{thm:MainTheorem})
  The sheaves in the complex 
  $\mathbb R \nu_* (\tilde \Omega^\bullet, \nu^* \omega \wedge - )$ are 
  finite $\OO_n$-modules since the morphism $\nu$ is proper. 
  By assumption, $\omega$ has an isolated zero on $(X,p)$ in the stratified 
  sense and hence 
  Lemma \ref{lem:DerivedPushforwardAtRegularPoints} implies that the cohomology 
  of this complex is supported at the origin. In particular, its Euler 
  characteristic is finite. 

  Suppose $W = (\omega_t,t)$ is an unfolding of $\omega|(X,p)$ as in Proposition 
  \ref{prp:MorsificationLemmaForOneForms} and -- possibly after shrinking $U$ -- let 
  \[
    W \colon U \times T \to \CC^n \times T
  \]
  be a suitable representative thereof. Denote by $\pi \colon U \times T \to T$ the 
  projection to the parameter $t$. The unfolding of $\omega$ 
  induces a family of complexes of sheaves 
  $\left( \tilde \Omega^\bullet, \nu^* \omega_t \wedge -\right)$ on the 
  Nash transform $\tilde X$ and hence also on the derived pushforward. 
  This furnishes a complex of coherent sheaves  
  \[
    \mathbb R \nu_* \left( \tilde \Omega^\bullet, \nu^* \omega_t \wedge - \right)
  \]
  on $U \times T$ which becomes a family of complexes over $T$ via the projection $\pi$. 
  Clearly, every sheaf $R^k \nu_* \tilde \Omega^r$ is $\pi$-flat. We may apply 
  the main result of \cite{GomezMont98}: There exist neighborhoods $p \in U' \subset U$ 
  and $0 \in T' \subset T$ 
  such that for every $\eta \in T'$ we have 
  \begin{equation}
    (-1)^d \chi\left( 
    \mathbb R \nu_* \left( \tilde \Omega^\bullet, \nu^* \omega_0 \wedge - \right)_p
    \right)
    =
    (-1)^d \sum_{x \in U'} 
    \chi \left(
    \mathbb R \nu_* \left( \tilde \Omega^\bullet, \nu^* \omega_\eta \wedge - \right)_x
    \right),
    \label{eqn:LawOfConservationOfNumberForLocalEulerCharacteristics}
  \end{equation}
  i.e. the Euler characteristic satisfies the law of conservation of number. 

  Suppose $U', T'$ and $B_\varepsilon$ have also been chosen as in Proposition 
  \ref{prp:MorsificationLemmaForOneForms} and fix $\eta \in T'$, $\eta \neq 0$. 
  By construction, $\omega_\eta$ has only 
  simple, isolated zeroes on the interior of $X \cap B_\varepsilon$ and none 
  on the boundary. 
  
  Whenever $x \in (X \setminus X_{\reg}) \cap B_\varepsilon$ 
  is such a point, at which $\omega_\eta$ has a 
  simple zero \textit{outside} $X_{\reg}$, 
  the restriction of $\omega_\eta$ to any limiting tangent 
  space $V$ of $X_{\reg}$ at $x$ is nonzero and consequently
  \[
    \mathbb R\nu_* \left( \tilde \Omega, \nu^* \omega \wedge - \right)_x \cong_{\qis} 0
  \]
  according to Lemma \ref{lem:DerivedPushforwardAtRegularPoints}.

  Whenever $x \in X_{\reg} \cap B_\varepsilon$ is a point with a 
  simple zero of $\omega_\eta$ at $x$ we find the following. 
  The Nash modification $\nu$ is a local isomorphism around $x$ 
  and therefore 
  \[
    \mathbb R \nu_* \left( \tilde \Omega^\bullet, \nu^* \omega_\eta \wedge - \right)_x 
    \cong \left( \Omega^\bullet_{X,x}, \omega_\eta \wedge - \right)
  \]
  is the Koszul complex on the modules $\Omega^k_{X,x}$. Lemma 
  \ref{lem:KoszulComplex} allows us to compute the Euler characteristic 
  \[
    (-1)^d\cdot  \chi\left( \Omega_{X,x}^\bullet, \omega_\eta \wedge - \right) = 1.
  \]
  The statement now follows from the principle of conservation of number. 
\end{proof}

\begin{example}
  \label{exp:PresentationOfComplexes}
  We continue with Example \ref{exp:EulerObstructionOfAOneForm}. As previously discussed, 
  the only interesting stratum of $X$ is $\mathscr S^2= X_{\reg}$. To prepare for the computations 
  of $\mu(2;f,0)$ we will describe a complex of graded $S$-modules representing 
  $(\tilde \Omega^\bullet,\nu^* \D f\wedge -)$. We set 
  $A = \CC[x,y,z]$, $S= A[s_0,s_1,s_2]$ and consider $S$ as a homogeneous coordinate 
  ring of $\PP^2_A$ over $A$. The ideal $J\subset S$ of homogeneous equations for 
  the Nash transform $\tilde X$ is obtained from the equations for the total transform 
  by saturation: Denote by $L$ the ideal of $2\times 2$-minors of the matrix
  \[
    \begin{pmatrix}
      s_0 & s_1 & s_2 \\
      \frac{\partial h}{\partial x} &
      \frac{\partial h}{\partial y} &
      \frac{\partial h}{\partial z} 
    \end{pmatrix}.
  \]
  Over $X_{\reg}$ these equations describe the graph of the rational 
  map $\Phi$ underlying the Nash blowup (\ref{eqn:RationalMapForNashBlowup}).
  Now
  \[
    J = (\left\langle h \right\rangle + L) : \langle y,z \rangle^\infty,
  \]
  where $\langle y,z\rangle$ is the ideal defining the singular locus of $X$ on which 
  $\Phi$ 
  is not defined.

  Let $Q^p$ be the module representing $\bigwedge^p \mathcal Q$ 
  with $\mathcal Q$ the tautological quotient bundle on $\PP^2_A$. A graded, free resolution of 
  the $Q^p$ is given by appropriate shifts of the Koszul complex in the $s$-variables. 
  Let 
  \[
    \theta = s_0 \cdot e_0 + s_1 \cdot e_1 + s_2 \cdot e_2 \in 
    H^0(\PP^2_A,\OO(1)^3) \cong \left( S^3 \right)_1 
  \]
  be the tautological section.
  Together with 
  \[
    \nu^* \D f = 
    -2(x-z) \cdot e_0 + 2y \cdot e_1 + 2(x-z) \cdot e_2 \in H^0(\PP^2_A, \OO^3) \cong \left(S^3\right)_0
  \]
  we obtain the following double complex.
  \[
    \xymatrix{
      & 0 & 0 & 0 & \\
      0 \ar[r] & 
      Q^0 \ar[r]^{\nu^* \D f\wedge} \ar[u] & 
      Q^1 \ar[r]^{\nu^* \D f\wedge} \ar[u] & 
      Q^2 \ar[r] \ar[u] & 
      0 \\
      0 \ar[r] & 
      \bigwedge^0 S^3 \ar[r]^{\nu^* \D f\wedge} \ar[u]^\varepsilon & 
      \bigwedge^1 S^3 \ar[r]^{\nu^* \D f\wedge} \ar[u]^\varepsilon & 
      \bigwedge^2 S^3 \ar[r] \ar[u]^\varepsilon & 
      0 \\
      & 0\ar[u]\ar[r] & 
      \left(\bigwedge^0 S^3 \right) \otimes S(-1) 
      \ar[u]^{\theta \wedge}\ar[r]^{\nu^* \D f \wedge} & 
      \left(\bigwedge^1 S^3 \right) \otimes S(-1) \ar[u]^{\theta \wedge} \ar[r] & 
      0 \\
      & & 0 \ar[r] \ar[u] & 
      \left(\bigwedge^0 S^3 \right) \otimes S(-2) 
      \ar[u]^{\theta \wedge}\ar[r] & 
      0 \\
      & & & 0 \ar[u] & \\
    }
  \]
  For every $q$ the module $M^q$ representing the restriction $\bigwedge^q \tilde \Omega^1$ 
  of $\mathcal Q^q$ to $\tilde X$ is given 
  by $Q^q \otimes S/J$. The complex of sheaves $(\tilde \Omega^\bullet, \nu^*\D f\wedge -)$ 
  on $\tilde X$ is thus represented by the complex of graded modules 
  \[
    \left( M^\bullet,\nu^*\D f\wedge - \right) = 
    \left( Q^\bullet \otimes S/J, \nu^* \D f\wedge - \right).
  \]
  As we shall see in the next section, 
  Proposition \ref{prp:DerivedPushforwardOfComplexes}, 
  we can compute the derived pushforward $\mathbb R\nu_*(\tilde \Omega^\bullet, \nu^* \D f\wedge -)$ 
  via a truncated \v{C}ech-double-complex on the complex of 
  modules $(M^\bullet, \nu^* \D f \wedge - )$. 
\end{example}

\section{How to compute $\mu_f(\alpha;X,0)$ for $\overline{\mathscr S_\alpha}$ a hypersurface}
\label{sec:Algorithm}

The following section will be phrased in purely algebraic 
terms. This is due to the fact that the complex numbers 
are not a computable field and also the ring of convergent 
power series is usually not available in computer algebra 
systems for symbolic computations. 
If we were working in the projective setting, Chow's theorem 
\cite{Chow49}
and the GAGA-principles due to Serre \cite{Serre55} allow us
to restrict to the algebraic case. In the local context 
we can not do so.
For these reasons, we will assume that both $(X,0) \subset (\CC^n,0)$ 
and either $f$ or $\omega$ as in Theorem \ref{thm:MainTheorem} 
or Corollary \ref{cor:MainCorollary} are algebraic and defined 
over some finite extension field $K$ of $\QQ$. 

Thus we will -- with a view towards Theorem \ref{thm:MainTheorem} --
work with proper maps 
$\pi \colon X \to Y$
of algebraic spaces. Let $\mathcal F$ be a coherent 
algebraic sheaf on $X$ and $\mathcal F^h$ its 
analytification.
It is well known that the sheaves $R^p\pi_*(\mathcal F)$ 
are $\OO_Y$-coherent. 
Grauert's theorem on direct images \cite{Grauert60} assures 
that also the direct images $R^p\pi_*(\mathcal F^h)$ are 
$\mathcal O_Y^h$-coherent and using Cech cohomology we obtain 
a natural morhism of cohomology sheaves
\[
  \varepsilon \colon R^p\pi_*(\mathcal F)\to R^p\pi_*(\mathcal F^h).
\]
for every $p$. 

We will see below that whenever $\pi$ is the restriction of a projection 
\[
  \pi' \colon \PP^r \times (\CC^n,0) \to (\CC^n,0),
\]
as we may assume for the purpose of this article by virtue 
of the Pl\"ucker embedding, one can express the direct images 
of a coherent algebraic sheaf $\mathcal F$ in terms of the 
cohomology of the relative twisting sheaves $\OO(-w)$ and 
vice versa for their analytifications. Now the formal 
completions of the rings 
\[
  \CC\{x_1,\dots,x_n\} \quad \text{ and } \quad \CC[x_1,\dots,x_n]_{\langle x_1,\dots, x_n \rangle}
\]
are isomorphic and so are the formal completions of 
\[
  R^p\pi'_*(\OO(-w))\quad \text{ and }\quad R^p\pi'_*(\OO^h(-w))
\]
for all $p$ and $w$. In what follows, the sheaf $\mathcal F$ 
-- or, more generally, the complex of sheaves $\mathcal F^\bullet$ -- 
will always 
have $\mathbb R \pi'_*(\mathcal F)$ and $\mathbb R \pi'_*(\mathcal F^h)$ 
with isolated support at the origin. Thus, their Euler characteristics 
both have to coincide with the Euler characteristic of their isomorphic 
formal completions. In particular, the comparison morphism 
$\varepsilon$ above is an isomorphism in this case and we may therefore 
carry out all computations in the algebraic setting. 

\medskip

Let $A$ be a commutative Noetherian ring. We set $S = A[s_0,\dots,s_{r}]$ 
and consider $S$ as a graded $A$-algebra. On the geometric side let 
\[
  \pi \colon \PP^r_A \to \Spec A
\]
be the associated projection.
Let $\OO= \tilde S$ be the structure sheaf of $\PP^r_A$ and $\OO(-w)$ the 
relative twisting sheaves for $w \in \ZZ$. 
Given any finitely generated graded $S$-module $M$ there is a corresponding 
sheaf $\tilde M$ of $\OO$-modules on $\PP^r_A$. 
We will first describe how to compute $\RR \pi_* (\tilde M)$ as a complex of 
finitely generated 
$A$-modules up to quasi-isomorphism 
and then generalize these results for complexes of finite, graded $S$-modules
$(M^\bullet,D^\bullet)$ and their associated complexes of sheaves 
on $\PP^r_A$. 

We may use \v{C}ech cohomology with respect to the 
canonical open covering of $\PP^r_A$.
For a graded $S$-module $M$ let 
\[
  \check{C}^p( M ) = 
  \bigoplus_{0 \leq i_0<i_1<\cdots<i_p\leq r} 
  M \otimes S[(s_{i_0} s_{i_1}\cdots s_{i_p})^{-1}].
\]
These modules are not finitely generated over $S$, but they have a
natural structure as a direct limit of finite $S$-modules given 
by the submodules 
\[
  \check{C}^p_{\leq d}(M) = 
  \bigoplus_{0 \leq i_0<i_1<\cdots<i_p\leq r} 
  M \otimes \frac{1}{(s_{i_0} s_{i_1}\cdots s_{i_p})^d}.
\]
The \v{C}ech-complex of twisted sections in $\tilde M$ is 
obtained from the $\check{C}^p(M)$ together with the 
differential $ \check{\D}\, \colon \check{C}^p(M) \to \check{C}^{p+1}(M)$ taking 
an element 
\[
  \frac{a_{i_1,\dots,i_p} }{(s_{i_0}\cdots s_{i_p})^{d}},
\]
$a_{i_1,\dots,i_p} \in M$ to the element in $\check{C}^{p+1}(M)$ with 
component $(j_{0},\dots,j_{p+1})$ given by
\[
  \frac{1}{(s_{j_0} \cdots s_{j_{p+1}})^d}
  \sum_{k=0}^{p+1} (-1)^k s_{j_k}^d a_{j_0,\dots,\hat{j_k},\dots,j_{p+1}}.
\]
As usual, $\hat{\cdot}$ indicates that the index is to be omitted.
We will write 
\[
  \check H^p(M) := H^p\left( \check{C}^\bullet(M) \right)
  \quad \text{ and } 
  \quad 
  \check H_{\leq d}^p(M) = H^p\left( \check{C}_{\leq d}^\bullet(M) \right)
\]
for the $p$-th cohomology of the \v{C}ech complex on a module $M$ 
and its truncations.

The modules $S(-w)$ and the corresponding twisting sheaves 
$\mathcal O(-w)$ have a well known cohomology, 
see \cite[Chapter III.5]{Hartshorne77}.
We deliberately identify 
\[
  S(-w) = \bigoplus_{d \in \ZZ} R^0 \pi_*( \mathcal O(d-w))
\]
and set 
\[
  E(-w) =
  \bigoplus_{d \in \ZZ} R^r\pi_*(\OO(d-w)) \cong
  \check{H}^r\left(S(-w))\right).
\]
The last term has a structure as a direct limit of $S$-modules via the maps
\begin{eqnarray*}
  \Psi_d \colon S(d(r+1)-w)/\langle s_0^d,\dots,s_r^d\rangle &
  \overset{\cong}{\longrightarrow}&
  \check{H}^r_{\leq d} (S(-w)) \subset 
  \check{H}^r\left( S(-w) \right), \\ 
  1 &\mapsto& \frac{1}{(s_0\cdots s_r)^d}.
\end{eqnarray*}
The pairing of monomials
\begin{eqnarray*}
  S(w) \times E(-w-r-1) & \to & A \\
  \left( s_0^{\alpha_0} s_1^{\alpha_1} \cdots s_r^{\alpha_r}, 
  \frac{1}{s_0^{\beta_0} s_1^{\beta_1}\cdots s_r^{\beta_r}} \right) &\mapsto&
  \begin{cases}
    1 & \text{ if } \alpha_i = \beta_i -1 \quad \forall i \\
    0 & \text{ otherwise }
  \end{cases}
\end{eqnarray*}
provides us with an identification 
\begin{equation}
  E(-w-r-1) \cong \Hom_{A}(S(w),A)
  \label{eqn:SModuleStructureOnHr}
\end{equation}
for all $w \in \ZZ$. Note that this pairing is compatible with the natural 
$S$-module structure on both sides. 

\begin{proposition}
  \label{prp:DerivedPushForwardSingleModule}
  Let $M$ be a graded $S$-module and 
  \[
    K^\bullet : 
    \xymatrix{ 
      0 & 
      M \ar[l] & 
      \oplus_{i_0=1}^{\beta_0}  S(-w_{0,i_0}) \ar[l]_{\varepsilon} & 
	\oplus_{i_{-1}=1}^{\beta_{-1}}  S(-w_{-1,i_{-1}}) \ar[l]_{D^0} & 
	\cdots \ar[l] \\
	& \cdots & 
	\oplus_{i_{-r}=1}^{\beta_{-r}}  S(-w_{-r,i_{-r}}) \ar[l]_{D^{-r+1}} &
	\oplus_{i_{-r-1}=1}^{\beta_{-r-1}}  S(-w_{-r-1,i_{-r-1}}) \ar[l]_{D^{-r}} &
    }
  \]
  an exact complex. Let 
  $\left( \bigoplus_{i_\bullet=1}^{r_\bullet} E(w_{\bullet,i_\bullet}), D^\bullet  \right)$ 
  be the complex with the $S$-module 
  $\bigoplus_{i_{-k}=1}^{r_{-k}} E(w_{-k,i_{-k}})$ as in (\ref{eqn:SModuleStructureOnHr})
  in cohomological degree $-k$ and 
  $D^k$ the differentials induced by the same differentials as those in $K^\bullet$.
  Then there is a short exact sequence
  \[
    \xymatrix{ 
      0 \ar[r] & 
      M \ar[r] & 
      \check H^0(M) \ar[r] & 
      H^{-r}\left( \bigoplus_{i_\bullet=1}^{\beta_\bullet} E(-w_{\bullet,i_\bullet}), D^\bullet \right)
      \ar[r] &
      0
    }
  \]
  and isomorphisms 
  \[
    \check H^p(M) \cong 
    H^{p-r}\left( \bigoplus_{i_\bullet=1}^{\beta_\bullet} E(-w_{\bullet,i_\bullet}), D^\bullet \right)
  \]
  for $0 < p \leq r$.
\end{proposition}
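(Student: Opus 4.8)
The plan is to compute the \v{C}ech hypercohomology of $\tilde M$ by resolving $\tilde M$ by sums of twisting sheaves and then using the known cohomology of $\OO(-w)$ on $\PP^r_A$. First I would apply the functor $\tilde{(-)}$ to the given exact complex $K^\bullet$; since sheafification is exact on graded modules (for the $\sim$-functor on $\PP^r_A$), we obtain an exact sequence of coherent sheaves on $\PP^r_A$ resolving $\tilde M$ to the left by sums of $\OO(-w)$'s. The complex is finite, of length $r+1$, so this is a genuine finite resolution $P^\bullet \to \tilde M$ placed in cohomological degrees $-r-1,\dots,-1,0$, with $P^0 = \bigoplus \OO(-w_{0,i_0})$ etc.

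The heart of the argument is a standard hypercohomology spectral sequence / double complex computation. I would form the \v{C}ech double complex $\check C^\bullet(P^\bullet)$ associated to the canonical affine covering of $\PP^r_A$. Taking cohomology first in the \v{C}ech direction, one uses that $\check H^p(\OO(d-w)) = R^p\pi_*(\OO(d-w))$ vanishes for $0 < p < r$, equals $S(-w)$ in degree $0$ (summed over $d$), and equals $E(-w)$ in degree $r$; these are precisely the identifications fixed before the statement, including (\ref{eqn:SModuleStructureOnHr}). Hence the $E_1$-page of the spectral sequence of the double complex (filtered by \v{C}ech degree) has only two nonzero rows: the row $p=0$ is the complex $(K^\bullet)_{\geq 0}$, i.e. the resolution itself in nonnegative \v{C}ech degree, which after one more differential sees only $M$ in the $0$-column by exactness of $K^\bullet$; and the row $p=r$ is the complex $\bigl(\bigoplus E(-w_{\bullet,i_\bullet}), D^\bullet\bigr)$. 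Because these two rows sit in \v{C}ech degrees $0$ and $r$, and the resolution has cohomological length $r+1$, the differentials relating them can be tracked degree by degree. The two-row spectral sequence degenerates after the relevant pages, and the total cohomology $\check H^p(M) = H^p(\operatorname{Tot} \check C^\bullet(P^\bullet))$ receives contributions only from the $p=0$ row (giving $M$ itself when $p=0$) and from the $p=r$ row shifted by $r$, giving $H^{p-r}$ of the $E$-complex. Assembling: for $0<p\le r$ only the bottom row contributes, yielding the claimed isomorphism $\check H^p(M)\cong H^{p-r}(\bigoplus E(-w_{\bullet,i_\bullet}),D^\bullet)$; for $p=0$ both rows can contribute, and the edge maps of the spectral sequence produce exactly the asserted short exact sequence $0\to M\to \check H^0(M)\to H^{-r}(\bigoplus E(-w_{\bullet,i_\bullet}),D^\bullet)\to 0$.

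Concretely, rather than invoking the spectral sequence abstractly I would argue by a small diagram chase on the double complex: the columns of $\check C^\bullet(P^\bullet)$ are the \v{C}ech complexes of the individual sheaves $P^j$, whose cohomology is concentrated in \v{C}ech-degrees $0$ and $r$; so after taking vertical cohomology the double complex collapses to the two horizontal complexes $(K^\bullet)$ in row $0$ and $(E^\bullet)$ in row $r$, with no room for higher differentials between them except a single connecting map in total degree $0$. Exactness of $K^\bullet$ kills all of row $0$ except the entry $M$ (in \v{C}ech degree $0$, cohomological degree $0$), and the connecting map in total degree $0$ is precisely the map $H^{-r}(E^\bullet)\to\check H^0(M)/M$; injectivity of $M\to\check H^0(M)$ is the classical fact $H^0(\PP^r_A,\OO)=A$ applied to the resolution, i.e. global sections are left exact.

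**Main obstacle.** The step I expect to require the most care is the bookkeeping that in total degree $0$ the two rows genuinely interact — so that the answer for $\check H^0$ is an extension rather than a direct sum — while for $0<p\le r$ they do not, because the row $p=r$ only reaches cohomological total degrees $\le r$ on the nose and the row $p=0$ only contributes $M$ in total degree $0$. Making this precise means tracking the cohomological degrees $-r-1,\dots,0$ of the resolution against the \v{C}ech degrees $0,\dots,r$ and checking that the off-diagonal entries produce no spurious contributions; the identification (\ref{eqn:SModuleStructureOnHr}) of $\check H^r(S(-w))$ with $E(-w)$ as an $S$-module (not merely as an $A$-module) is what guarantees the induced differentials on the $E$-complex are the ones named $D^\bullet$.
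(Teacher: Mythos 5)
Your double complex is the same one the paper uses --- the paper's proof is an explicit staircase chase in the \v{C}ech double complex of the resolution, and you run the two spectral sequences of that very complex --- and for $0<p\le r$ your bookkeeping is correct. But at the edge $p=0$ there is a genuine gap, located exactly at the point you yourself flag as delicate. Writing $E^\bullet:=\bigl(\bigoplus_{i_\bullet}E(-w_{\bullet,i_\bullet}),D^\bullet\bigr)$, the two-row spectral sequence admits one possibly nonzero higher differential $d_{r+1}\colon E_{r+1}^{-r-1,r}\to E_{r+1}^{0,0}$, i.e.\ a map from a subquotient of $H^{-r-1}(E^\bullet)$ (total degree $-1$) into $M$ (total degree $0$). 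You assert there is ``no room'' for such a map and that injectivity of $M\to\check H^0(M)$ follows because ``global sections are left exact''. Neither claim holds: left exactness of global sections says nothing about the map $M\to\check H^0(M)$, whose kernel is precisely the submodule of $M$ annihilated by a power of the irrelevant ideal $\langle s_0,\dots,s_r\rangle$, and the vanishing of that kernel is equivalent to the vanishing of the differential you dismissed. The hypotheses you invoke do not force it: for $M=S/\langle s_0,\dots,s_r\rangle$ with its Koszul resolution (which has the required length and exactness) one has $\check H^0(M)=0$ while $M\ne 0$, and correspondingly $H^{-r-1}(E^\bullet)\to M$ is nonzero. So your argument establishes the isomorphisms for $0<p\le r$ and exactness of $M\to\check H^0(M)\to H^{-r}(E^\bullet)\to 0$ at the last two spots, but not the injectivity on the left; closing the gap requires showing that $M$ has no $\langle s_0,\dots,s_r\rangle$-torsion (equivalently $d_{r+1}=0$, equivalently $H^{-r-1}(E^\bullet)=0$ in the relevant range), which is an extra input about the modules actually fed into the proposition rather than a formal consequence of the spectral sequence. (The paper's own chase is equally silent on this point, so the delicacy sits in the statement itself, but your stated justification is the step that would fail.)

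A smaller imprecision: $K^\bullet$ is only assumed exact at $M,K^0,\dots,K^{-r}$, so the sheafified complex is a partial resolution of $\tilde M$, not a ``genuine finite resolution''; its leftmost kernel contributes only \v{C}ech cohomology in degrees $\le r$ placed in column $-r-1$, hence only in total degrees $<0$, so the identification of the total cohomology with $\check H^p(M)$ for $p\ge 0$ survives --- but this should be argued (it is exactly why the resolution needs length $r+2$ and no more), not assumed.
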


\begin{proof}
  The statements follow from a diagram chase in the double complex 
  (\ref{eqn:CechDoubleComplex}). Note that in (\ref{eqn:CechDoubleComplex}) 
  all columns but the last one are exact by construction. 
  The same holds for all rows but the first one.
  Since taking cohomology commutes with direct sums, the complex 
  \[
    \left( \bigoplus_{i_\bullet=1}^{\beta_\bullet} 
    E(-w_{\bullet,i_\bullet}), D^\bullet \right)
  \] 
  is identical with the last 
  column of (\ref{eqn:CechDoubleComplex}), while the first row is the 
  \v{C}ech-complex on $M$.
  \begin{sidewaysfigure}
    \vspace{13cm}
    \begin{equation}
      \xymatrix{
	& 0 & 0 & 0 &   & 0 & 0 & \\
	0 \ar[r] &
	M \ar[r] \ar[u]& 
	\check{C}^0(M) \ar[r]^{\check{\D\,}} \ar[u] & 
	\check{C}^1(M) \ar[r]^{\check{\D\,}} \ar[u] & 
	\cdots \ar[r]^{\check{\D\,}} & 
	\check{C}^{r-1}(M) \ar[r]^{\check{\D\,}} \ar[u] & 
	\check{C}^r(M) \ar[u] \ar[r] &
	0\\
	0 \ar[r] & 
	K^0 \ar[r] \ar[u]^\varepsilon & 
	\check{C}^0(K^0) \ar[r]^{\check{\D\,}} \ar[u]^\varepsilon & 
	\check{C}^1(K^0) \ar[r]^{\check{\D\,}} \ar[u]^\varepsilon & 
	\cdots \ar[r]^{\check{\D\,}} &
	\check{C}^{r-1}(K^0) \ar[r]^{\check{\D\,}} \ar[u]^\varepsilon & 
	\check{C}^r(K^0) \ar[u]^\varepsilon \ar[r] & 
	\check{H}^r(K^0) \ar[r] \ar[u] &
	0 \\
	0 \ar[r] & 
	K^{-1} \ar[r] \ar[u]^D & 
	\check{C}^0(K^{-1}) \ar[r]^{\check{\D\,}} \ar[u]^D & 
	\check{C}^{1}(K^{-1}) \ar[r]^{\check{\D\,}} \ar[u]^D & 
	\cdots \ar[r]^{\check{\D\,}} &
	\check{C}^{r-1}(K^{-1}) \ar[r]^{\check{\D\,}} \ar[u]^D & 
	\check{C}^r(K^{-1}) \ar[u]^D \ar[r] & 
	\check{H}^r(K^{-1}) \ar[r] \ar[u]^D &
	0 \\
	& 
	\vdots \ar[u]^{D}&
	\vdots \ar[u]^{D}&
	\vdots \ar[u]^{D}&
	&
	\vdots \ar[u]^{D}&
	\vdots \ar[u]^{D}&
	\vdots \ar[u]^{D}&
	& \\
	0 \ar[r] & 
	K^{-r+1} \ar[r] \ar[u]^D & 
	\check{C}^0(K^{-r+1}) \ar[r]^{\check{\D\,}} \ar[u]^D & 
	\check{C}^{1}(K^{-r+1}) \ar[r]^{\check{\D\,}} \ar[u]^D & 
	\cdots \ar[r]^{\check{\D\,}} &
	\check{C}^{r-1}(K^{-r+1}) \ar[r]^{\check{\D\,}} \ar[u]^D & 
	\check{C}^r(K^{-r+1}) \ar[u]^D \ar[r] & 
	\check{H}^r(K^{-r+1}) \ar[r] \ar[u]^D &
	0 \\
	0 \ar[r] & 
	K^{-r} \ar[r] \ar[u]^D & 
	\check{C}^0(K^{-r}) \ar[r]^{\check{\D\,}} \ar[u]^D & 
	\check{C}^{1}(K^{-r}) \ar[r]^{\check{\D\,}} \ar[u]^D & 
	\cdots \ar[r]^{\check{\D\,}} &
	\check{C}^{r-1}(K^{-r}) \ar[r]^{\check{\D\,}} \ar[u]^D & 
	\check{C}^r(K^{-r}) \ar[u]^D \ar[r] & 
	\check{H}^r(K^{-r}) \ar[r] \ar[u]^D &
	0 \\
	0 \ar[r] & 
	K^{-r-1} \ar[r] \ar[u]^D & 
	\check{C}^0(K^{-r-1}) \ar[r]^{\check{\D\,}} \ar[u]^D & 
	\check{C}^{1}(K^{-r-1}) \ar[r]^{\check{\D\,}} \ar[u]^D & 
	\cdots \ar[r]^{\check{\D\,}} &
	\check{C}^{r-1}(K^{-r-1}) \ar[r]^{\check{\D\,}} \ar[u]^D & 
	\check{C}^r(K^{-r-1}) \ar[u]^D \ar[r] & 
	\check{H}^r(K^{-r-1}) \ar[r] \ar[u]^D &
	0 \\
      }
      \label{eqn:CechDoubleComplex}
    \end{equation}
  \end{sidewaysfigure}
\end{proof}

We can use Proposition \ref{prp:DerivedPushForwardSingleModule} to describe 
$\mathbb R\pi_* (\tilde M)$ as a complex of finite $A$-modules.
  Choose any 
  \[
    d \geq \max\{ w_{-k,i_{-k}} : 0 \geq -k \geq -r-1 \} - r
  \]
  and let
  \[
    \Psi^{-k}_d \colon 
    \bigoplus_{i_{-k} = 1}^{\beta_{-k}} \frac{S(d(r+1)-w_{-k,i_{-k}})}{
    \langle s_0^d,\dots,s_r^d\rangle} \hookrightarrow 
    \bigoplus_{i_{-k}=1}^{\beta_{-k}} E(-w_{-k,i_{-k}})
  \]
  be the inclusions of finite $S$-modules as before. The restriction on the choice of 
  $d$ assures that the degree zero part of every 
  $E(-w_{-k,i_{-k}})$ is fully contained in the image of 
  $\Psi^{-k}$. Consequently, the homomorphism of complexes in degree 
  zero
  \[
    \Psi^{\bullet}_d \colon 
    \left(\bigoplus_{i_{\bullet} = 1}^{\beta_{\bullet}} \frac{S(d(r+1)-w_{\bullet,i_{\bullet}})}{
    \langle s_0^d,\dots,s_r^d\rangle}, D^\bullet \right)_0 \overset{\cong}{\longrightarrow}
    \left( \bigoplus_{i_\bullet=1}^{\beta_\bullet} E(-w_{\bullet,i_\bullet}), 
    D^\bullet \right)_0 
  \]
  is an isomorphism of complexes of finite $A$-modules.

In other words, there is a short exact sequence of free finite $A$-modules
\[
  \xymatrix{ 
    0 \ar[r] & 
    M_0 \ar[r] & 
    R^0\pi_*( \tilde M) \ar[r] &
    H^{-r} \left(\bigoplus_{i_{\bullet} = 1}^{\beta_{\bullet}} \frac{S(d(r+1)-w_{\bullet,i_{\bullet}})}{
    \langle s_0^d,\dots,s_r^d\rangle}, D^\bullet \right)_0 \ar[r] &
    0
  }
\]
and isomorphisms 
\[
  R^p\pi_*(\tilde M) \cong 
  H^{p-r} \left(\bigoplus_{i_{\bullet} = 1}^{\beta_{\bullet}} S(d(r+1)-w_{\bullet,i_{\bullet}})/
    \langle s_0^d,\dots,s_r^d\rangle, D^\bullet \right)_0 
\]
for $0 < p \leq r$.

In terms of \v{C}ech-cohomology this implies the following. We may replace every 
\v{C}ech complex $\check{C}^\bullet(K^{-p})$ in (\ref{eqn:CechDoubleComplex}) 
by its truncation $\check{C}^\bullet_{\leq d}( K^{-p} )$ and restrict to 
the degree zero strands in each term. Another diagram chase reveals a quasi-isomorphism 
\begin{equation}
  \mathbb R \pi_*(\tilde M) \cong \check{C}^\bullet_{\leq d} (M)_0
  \label{eqn:DerivedPushforwardAsTruncatedCechComplex}
\end{equation}
as complexes of finite $A$-modules.

\begin{proposition}
  \label{prp:DerivedPushforwardOfComplexes}
  Let $M^\bullet$ be a bounded complex of finitely generated, graded 
  $S$-modules and $K^{\bullet,q} \overset{\varepsilon}{\longrightarrow} M^q$ 
  a graded free resolution of every $M^q$ with 
  \[
    K^{-p,q} = \bigoplus_{i_{-p,q}=1}^{\beta_{-p,q}} 
    S\left( -w_{-p,q,i_{-p,q}} \right).
  \]
  Choose $d \geq \max\left\{ w_{-p,q,i_{-p,q}} : M^q \neq 0, -p > -r-1 \right\}-r$. 
  Then $\mathbb R \pi_*( \tilde M^\bullet )$ is quasi-isomorphic to the degree
  zero part of the total 
  complex of the double complex $C_{\leq d}^{\bullet,\bullet}$ with terms 
  $C_{\leq d}^{p,q} = \check{C}^{p}_{\leq d}( M^q)$:
  \[
    \mathbb R \pi_*( \tilde M^\bullet ) \cong_{\operatorname{qis}}
    \operatorname{Tot}\left( \check{C}^{\bullet}_{\leq d}(M^\bullet) \right)_0.
  \]
\end{proposition}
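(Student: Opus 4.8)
The plan is to build $\mathbb R\pi_*(\tilde M^\bullet)$ out of the single-module computation (\ref{eqn:DerivedPushforwardAsTruncatedCechComplex}) one column at a time, and then invoke a routine spectral-sequence comparison to descend to the total complex. Concretely, I would first observe that for each fixed $q$ the \v{C}ech complex $\check C^\bullet(\tilde M^q)$ is a resolution of $\tilde M^q$ by $\pi_*$-acyclic sheaves: each term is the pushforward of a quasi-coherent sheaf from a basic open $U_{i_0\cdots i_p}\subset \PP^r_A$, and $U_{i_0\cdots i_p}$ is affine over $\Spec A$. Applying $\pi_*$ termwise turns the degree-zero strand $\check C^p(M^q)_0=\bigoplus\Gamma(U_{i_0\cdots i_p},\tilde M^q)$ into a double complex, and since $M^\bullet$ is bounded while the \v{C}ech direction ranges only over $0,\dots,r$, the total complex of this double complex represents $\mathbb R\pi_*(\tilde M^\bullet)$ --- this is the standard fact that $\mathbb R\pi_*$ of a bounded complex is computed by a termwise acyclic resolution followed by $\operatorname{Tot}$.

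The second step is to cut this model down to the truncated one. There is an evident inclusion of double complexes $\check C^\bullet_{\leq d}(M^\bullet)_0\hookrightarrow \check C^\bullet(M^\bullet)_0$, compatible both with the \v{C}ech differentials and with the differentials of $M^\bullet$ (which are graded of degree zero, hence preserve the truncation and the degree-zero strand). For a fixed $q$ with $M^q\neq 0$, the hypothesis $d\geq \max\{w_{-p,q,i_{-p,q}}:M^q\neq 0,\ -p>-r-1\}-r$ in particular meets the bound required in (\ref{eqn:DerivedPushforwardAsTruncatedCechComplex}) for the chosen resolution $K^{\bullet,q}\to M^q$, so the $q$-th column inclusion $\check C^\bullet_{\leq d}(M^q)_0\hookrightarrow\check C^\bullet(M^q)_0$ is a quasi-isomorphism: both sides compute $\mathbb R\pi_*(\tilde M^q)$, and the comparison realizing this is precisely the inclusion coming from truncating the diagram (\ref{eqn:CechDoubleComplex}) behind (\ref{eqn:DerivedPushforwardAsTruncatedCechComplex}). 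Filtering the two total complexes by the column index $q$, the induced morphism on the $E_1$-pages of the associated spectral sequences is a termwise isomorphism; the filtrations are bounded, so the morphism is a quasi-isomorphism on total complexes. Composing with the identification of the first step yields $\mathbb R\pi_*(\tilde M^\bullet)\cong_{\qis}\operatorname{Tot}\bigl(\check C^\bullet_{\leq d}(M^\bullet)\bigr)_0$, as claimed.

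The one point that needs genuine care is the uniformity of the bound on $d$: one must first fix, for each of the finitely many $q$ with $M^q\neq 0$, a graded free resolution $K^{\bullet,q}\to M^q$ (which exists since $M^q$ is finitely generated and $S$ is Noetherian), and only then check that the single threshold in the statement simultaneously validates (\ref{eqn:DerivedPushforwardAsTruncatedCechComplex}) for every column. Everything else --- acyclicity of the \v{C}ech terms, exactness of the degree-zero functor on graded $A$-modules, boundedness of all complexes in sight, and convergence of the two spectral sequences --- is routine and uses only that $M^\bullet$ is bounded with finitely generated terms.
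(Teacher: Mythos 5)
Your argument is correct and follows essentially the same route as the paper: represent $\mathbb R\pi_*(\tilde M^\bullet)$ by the full \v{C}ech double complex, then use the column filtration and the single-module statement (\ref{eqn:DerivedPushforwardAsTruncatedCechComplex}) to replace each column by its truncation on the $E_1$-page. Your explicit remarks that the \v{C}ech terms are $\pi_*$-acyclic, that the comparison is realized by the inclusion of the truncated double complex, and that $d$ must be chosen uniformly over the finitely many nonzero $M^q$ are just careful spellings-out of what the paper's proof leaves implicit.
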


\begin{proof}
  The right derived pushforward of a single sheaf $\tilde M$ on $\PP^r_A$ is usually 
  defined via injective resolutions of $\tilde M$ and it is well known that 
  the resulting complex is quasi-isomorphic to the Cech-complex on $\tilde M$ 
  for the affine covering above. For a complex of sheaves $\tilde M^\bullet$ 
  the derived pushforward can be computed as the total complex of a double complex 
  $I^{\bullet,\bullet}$ of injective sheaves which forms an injective resolution 
  of $\tilde M^\bullet$. There is a corresponding spectral sequence 
  identifying this total complex with the total complex of the Cech-double complex for 
  $\tilde M^\bullet$ up to quasi isomorphism analogous to the case of 
  a single sheaf. The result now follows from 
  (\ref{eqn:DerivedPushforwardAsTruncatedCechComplex}): On the first page of 
  the spectral sequence of the Cech-double complex $\check C^\bullet(M^\bullet)$ 
  we may replace each term 
  $H^p( \check C^\bullet(M^q))$ by the truncation 
  $H^p( \check C^\bullet_{\leq d}(M^\bullet))$.
\end{proof}

We conclude with a brief description of how to use Proposition 
\ref{prp:DerivedPushforwardOfComplexes} in order to compute 
(\ref{eqn:MainFormula}). Let $(X,0) \subset (\CC^n,0)$ be a 
reduced algebraic hypersurface defined over some finite extension 
$K$ of $\QQ$ and 
$\omega$ an algebraic $1$-form as in Theorem \ref{thm:MainTheorem} 
defined over the same field.

Set $A = K[x_1,\dots,x_n]_{\langle x_1,\dots,x_n}\rangle$ and let
$S = A[s_1,\dots,s_r]$ be the homogeneous ring in the $s$-variables.
Let $J$ be the homogeneous ideal of $S$ defining 
the Nash transform $\tilde X \subset \PP^{n-1}\times (\CC^n,0)$ 
and $M^q$ the graded modules presenting the duals of the 
exterior powers of the Nash bundle $\tilde \Omega^q$ on $\tilde X$ 
together with the morphisms given by the pullbacks $\nu^*\omega$ 
as in Example \ref{exp:PresentationOfComplexes}.
\begin{enumerate}[1)]
  \item We can compute a partial graded free resolution of every one of the 
    $M^q$ using Gr\"obner bases and a mixed ordering whose first block is 
    graded and global in the $s$-variables and whose second block is 
    local in the $x$-variables. 
  \item From this we obtain the bound $d$ on the pole order for the Cech-double complex 
    and we can build the truncated Cech-double complex $\check C^\bullet_{\leq d}(M^\bullet)$ 
    as a double complex of finite $S$-modules. 
  \item The degree-$0$-strands of $\check C^\bullet_{\leq d}(M^\bullet)$ are finite 
    $A$-modules generated by monomials in the $s$-variables. We can choose generators 
    and relations accordingly and extract the induced matrices for $\nu^*\omega\wedge -$ over 
    $A$ from the maps defined over $S$. 
  \item Since $\omega$ had an isolated zero, the cohomology of the resulting 
    complex must be finite over $K$. We can proceed by the usual Groebner basis 
    methods for the computation of Euler characteristics.
\end{enumerate}
These computations apply in particular to the case 
$X = \overline{\mathscr S_\alpha}$ and
$\omega = \D f$ 
as in Corollary \ref{cor:MainCorollary}.

\begin{remark}
  Note that in Proposition \ref{prp:DerivedPushforwardOfComplexes}
  we do not need to compute a graded free resolution of 
  the whole complex $M^\bullet$ by means of a double complex of free, 
  graded $S$-modules, but only resolutions of the individual terms 
  $M^q$. With a view towards the application of Proposition 
  \ref{prp:DerivedPushforwardOfComplexes} for the computation of 
  (\ref{eqn:MainFormula}) this entails that the number $d$ can be chosen once and 
  for all for a given space $(X,0)$ and then used for every $1$-form 
  $\omega$ with isolated zero on $(X,0)$. 
\end{remark}

\bibliographystyle{alpha}
\bibliography{sources}

\begin{thebibliography}{GMSV91}

\bibitem[AGZV85]{ArnoldGuseinZadeVarchenkoVolII}
V.~Arnold, S.M. Gusein-Zade, and A.~Varchenko.
\newblock {\em Singularities of Differentiable Maps, Volume II}.
\newblock Birkh\"auser, 1985.

\bibitem[BH93]{BrunsHerzog93}
W.~Bruns and J.~Herzog.
\newblock {\em Cohen-Macaulay rings}.
\newblock Cambridge University Press, 1993.

\bibitem[BMPS04]{BrasseletMasseyParameswaranSeade04}
J.-P. Brasselet, D.~Massey, A.~J. Parameswaran, and J.~Seade.
\newblock Euler obstruction and defects of functions on singular varieties.
\newblock {\em J. London Math. Soc. (2)}, 70(1):59--76, 2004.

\bibitem[BS81]{BrasseletSchwartz81}
J.-P. Brasselet and M.~H. Schwartz.
\newblock Sur les classes de {C}hern d'un ensemble analytique complexe.
\newblock {\em Ast\'erisque}, 82-83:93--147, 1981.

\bibitem[Cho49]{Chow49}
W-L. Chow.
\newblock On compact complex analytic varieties.
\newblock {\em Amer. J. of Maths.}, 71:893--914, 1949.

\bibitem[EGZ05]{EbelingGuseinZade05}
W.~Ebeling and S.~Gusein-Zade.
\newblock Radial {I}ndex and {E}uler {O}bstruction of a 1-form on a {S}ingular
  {V}ariety.
\newblock {\em Geometriae Dedicata}, 113:231--241, 2005.

\bibitem[EGZ06]{EbelingGuseinZade06}
W.~Ebeling and S.~M. Gusein-Zade.
\newblock Indices of vector fields and 1-forms on singular varieties.
\newblock In {\em Global aspects of complex geometry}, pages 129--169.
  Springer, Berlin, 2006.

\bibitem[EGZS04]{EbelingGuseinZadeSeade04}
W.~Ebeling, S.~M. Gusein-Zade, and J.~Seade.
\newblock Homological index for 1-forms and a {M}ilnor number for isolated
  singularities.
\newblock {\em Internat. J. Math.}, 15(9):895--905, 2004.

\bibitem[GGM02]{GiraldoGomezMont02}
L.~Giraldo and X.~G\'omez-Mont.
\newblock A {L}aw of {C}onservation of {N}umber for {L}ocal {E}uler
  {C}haracteristics.
\newblock {\em Contemporary Mathematics}, 311:251 -- 259, 2002.

\bibitem[GM88]{GoreskyMacPherson88}
M.~Goresky and R.~MacPherson.
\newblock {\em Stratified Morse Theory}.
\newblock Springer Verlag, 1988.

\bibitem[GM98]{GomezMont98}
Xavier G\'{o}mez-Mont.
\newblock An algebraic formula for the index of a vector field on a
  hypersurface with an isolated singularity.
\newblock {\em J. Algebraic Geom.}, 7(4):731--752, 1998.

\bibitem[GMSV91]{GomezMontSeadeVerjovsky91}
X.~G\'{o}mez-Mont, J.~Seade, and A.~Verjovsky.
\newblock The index of a holomorphic flow with an isolated singularity.
\newblock {\em Math. Ann.}, 291(4):737--751, 1991.

\bibitem[Gra60]{Grauert60}
H.~Grauert.
\newblock Ein {T}heorem der analytischen {G}arbentheorie und die {M}odulr\"aume
  komplexer {S}trukturen.
\newblock {\em Publ. Math. IHES}, 5:233--292, 1960.

\bibitem[Har77]{Hartshorne77}
R.~Hartshorne.
\newblock {\em Algebraic Geometry}.
\newblock Springer, 1977.

\bibitem[Hir77]{Hironaka77}
H.~Hironaka.
\newblock Stratification and flatness.
\newblock In P.~Holm, editor, {\em Real and {C}omplex {S}ingularities}, pages
  199--265. Nordic {S}ummer {S}chool/{NAVF}, 1977.

\bibitem[L\^87]{Le87}
D.~T. L\^e.
\newblock Le concept de singularit\'{e} isol\'{e}e de fonction analytique.
\newblock In {\em Complex analytic singularities}, volume~8 of {\em Adv. Stud.
  Pure Math.}, pages 215--227. North-Holland, Amsterdam, 1987.

\bibitem[Mac74]{MacPherson74}
R.~D. MacPherson.
\newblock Chern classes of singular algebraic varieties.
\newblock {\em Ann. of Math.}, 100:423--432, 1974.

\bibitem[Mil68]{Milnor68}
J.W. Milnor.
\newblock {\em Singular points of complex hypersurfaces}.
\newblock Princenton University Press, 1968.

\bibitem[Ser56]{Serre55}
J.-P. Serre.
\newblock G\'{e}om\'{e}trie alg\'{e}brique et g\'{e}om\'{e}trie analytique.
\newblock {\em Ann. Inst. Fourier (Grenoble)}, 6:1--42, 1955/56.

\bibitem[STV05]{SeadeTibarVerjovsky05}
J.~Seade, M.~Tib{\u a}r, and A.~Verjovsky.
\newblock Milnor {N}umbers and {E}uler {O}bstruction.
\newblock {\em Bulletin of the Brazilian Mathematical Society}, 36(2), 2005.

\bibitem[Tei82]{Teissier82}
B.~Teissier.
\newblock Vari\'{e}t\'{e}s polaires. {II}. {M}ultiplicit\'{e}s polaires,
  sections planes, et conditions de {W}hitney.
\newblock In {\em Algebraic geometry ({L}a {R}\'{a}bida, 1981)}, volume 961 of
  {\em Lecture Notes in Math.}, pages 314--491. Springer, Berlin, 1982.

\bibitem[Tib95]{Tibar95}
M.~Tib{\u a}r.
\newblock Bouquet decomposition of the {M}ilnor fibre.
\newblock {\em Topology}, 35:227--241, 1995.

\bibitem[TT81]{LeTeissier81}
L\^e~D. T. and B.~Teissier.
\newblock Vari\'{e}t\'{e}s polaires locales et classes de {C}hern des
  vari\'{e}t\'{e}s singuli\`eres.
\newblock {\em Ann. of Math. (2)}, 114(3):457--491, 1981.

\end{thebibliography}

\end{document}